\newtheorem{proposition}{Proposition}[section]
\newtheorem{lemma}[proposition]{Lemma}
\newtheorem{corollary}[proposition]{Corollary}
\newtheorem{theorem}[proposition]{Theorem}
\theoremstyle{definition}
\newtheorem{definition}[proposition]{Definition}
\newtheorem{example}[proposition]{Example}
\theoremstyle{remark}
\newtheorem{remark}[proposition]{Remark}
\newcommand{\proplabel}[1]{\label{prop:#1}}
\newcommand{\propref}[1]{Proposition~\ref{prop:#1}}
\newcommand{\lemlabel}[1]{\label{lem:#1}}
\newcommand{\lemref}[1]{Lemma~\ref{lem:#1}}
\newcommand{\thelabel}[1]{\label{the:#1}}
\newcommand{\theref}[1]{Theorem~\ref{the:#1}}
\newcommand{\corlabel}[1]{\label{cor:#1}}
\newcommand{\corref}[1]{Corollary~\ref{cor:#1}}
\newcommand{\deflabel}[1]{\label{def:#1}}
\newcommand{\exalabel}[1]{\label{ex:#1}}
\newcommand{\exaref}[1]{Example~\ref{ex:#1}}
\newcommand{\seclabel}[1]{\label{sec:#1}}
\newcommand{\secref}[1]{Section~\ref{sec:#1}}
\newcommand\Ker{{\rm Ker}}
\newcommand\Ima{{\rm Im}}
\newcommand\ot{\otimes}
\newcommand\mc{\mathcal}
\newcommand\ov{\overline}
\newcommand\el{\rm el}
\newcommand\rk{\rm rk}
\newcommand\End{\rm End}
\newcommand\wt{\widetilde}
\newcommand\Irr{\rm Irr}
\newcommand\Ind{\rm Ind}
\newcommand\mapright[1]{\smash{\mathop{\longrightarrow}\limits^{#1}}}
\newcommand\mapleft[1]{\smash{\mathop{\longleftarrow}\limits^{#1}}}
\newcommand\hookmapright[1]{\smash{\mathop{\hookrightarrow}\limits^{#1}}}
\newcommand{\req}{\mathrel{\rotatebox{90}{$\Leftrightarrow$}}}
\begin{document}

\title{Generalized characters for glider representations of groups}
\author[F. Caenepeel]{Frederik Caenepeel}
\address{Department of Mathematics, University of Antwerp, Antwerp, Belgium}
\email{Frederik.Caenepeel@uantwerpen.be}
\address{Shanghai Center for Mathematical Sciences, Fudan University, Shanghai, China}
\email{frederik\textunderscore{}caenepeel@fudan.edu.cn}
\author[F. Van Oystaeyen]{Fred Van Oystaeyen}
\address{Department of Mathematics, University of Antwerp, Antwerp, Belgium}
\email{Fred.Vanoystaeyen@uantwerpen.be}

\begin{abstract}
Glider representations can be defined for a finite algebra filtration $FKG$ determined by a chain of subgroups $1 \subset G_1 \subset \ldots \subset G_d = G$. In this paper we develop the generalized character theory for such glider representations. We give the generalization of Artin's theorem and define a generalized inproduct. For finite abelian groups $G$ with chain $1 \subset G$, we explicitly calculate the generalized character ring and compute its semisimple quotient. The papers ends with a discussion of the quaternion group as a first non-abelian example.
\end{abstract}

\thanks{The first author was partly Aspirant PhD Fellow of FWO and partly postdoctoral researcher at the Shanghai Center for Mathematical Sciences.\\
Corresponding author: Frederik Caenepeel}
\maketitle

\section{introduction}

This paper aims to continue the study of glider representations appearing in group theory. As such, it is a follow up of \cite{CVo1}, \cite{CVo2} and more concretely, it starts from the definition of a generalized trace map as defined in \cite{CVo2}. In a preliminary section, we recall the definition of a glider representation and the relevant associated notions we will need. In general, glider representations can be defined for any (positively) filtered ring $FR$, but here we specifically work over the finite algebra filtration $FKG$ of the group algebra $KG$ of finite group $G$ given by a chain of subgroups $1 \subset G_1 \subset \ldots \subset G_d = G$. Specifically, the algebra filtration is given by $F_{-n}KG = 0$, $n > 0$, $F_nKG = KG_n$ for $0 \leq n \leq d$ and $F_nKG = KG$ for $n \geq d$, with $K$ an algebraically closed field of characteristic 0. We also recall the definition of a generalized trace map and a characterization of irreducibility in terms of the value of the associated trace map at the unit element.\\

In \cite{CVo2} we argued how to retrieve the classical character table of a $p$-group $G$ from the generalized character table and we indicated how this extended theory can discern between the groups $Q_8$ and $D_8$. In Section 3 of this paper we introduce the generalization of the set of class functions, that is, functions $G \to \mathbb{C}^D$, $D = \frac{(d+1)(d+2)}{2}$, constant on the conjugacy classes intersected with the different groups appearing in the chain. We denote this set by $\mc{A}(\wt{G})$ and we equip it with a $\mathbb{Z}$-module structure. Every glider representation $\Omega \supset M$ yields such a generalized class function $\chi_M$, but different gliders can yield the same generalized class function. For irreducible gliders, adding information on the one dimensional subspace $M_{\el(M)} = Km$ does yield different class functions. Therefore we define the additive subgroup $R(\wt{G})$ as the free $\mathbb{Z}$-module generated by $\chi_M$ with $M$ running over the irreducible glider representations. From \cite{CVo1} we know how to induce glider representations from a normal subgroup $H \triangleleft G$ to the bigger group $G$, which brings us into the classical setting where Artin's theorem can be proved. The section ends with a generalization of this result, see \theref{Artin}.\\

In Section 4 we define a $\mathbb{C}$-bilinear map
$$ \langle -,- \rangle: R(\wt{G}) \times R(\wt{G}) \to \mathbb{C}^D,$$
based on the various inproducts of the representation rings $R(G_i)$. For the chain $1 \subset G$ with $G$ abelian, we give a characterization of this generalized inproduct $\langle \chi_M, \chi_M \rangle$ for irreducible, bodyless gliders of essential length 1. From \cite[Theorem 3.13]{CVo2} we argue why irreducible gliders of essential length 1 are indexed by the subsets of $G$ and we prove that the set $\{ \chi_A~\vline~ A \in \mc{P}(G)\}$ is $\mathbb{C}$-linearly independent, \theref{charindep}. We further show that the ring extension $R(G) \hookmapright{} R(\wt{G})$ is integral, \propref{integralextension}. The section ends with a discussion on a chain $1 < H < G$ of finite abelian groups.\\

Finite abelian groups $G$ are isomorphic to their character groups $\hat{G}$, which allows to consider the algebra extension $\mathbb{Q}(G) \hookmapright{} \mathbb{Q} \ot_\mathbb{Z} R(\wt{G})$. Here the generalized character ring is computed with regard to the chain $1 < G$ and we denote $\mathbb{Q}(\wt{G}) : = \mathbb{Q} \ot_\mathbb{Z} R(\wt{G})$. In \secref{primitiveidempotents} we calculate the Jacobson radical $J$ and the semisimple quotient $\wt{Q}(\wt{G})/J$, \theref{Ja}. Combined with the classification of primitive central idempotents in the group algebra $\mathbb{Q}(G)$, see \cite{JLP}, we classify the primitive central idempotents in $\mathbb{Q}(\wt{G})$, \theref{primitiveidempotents}. \secref{complicatedchain} extends the results from \secref{primitiveidempotents} for a chain $1< H <G$ of finite abelian groups and in the final section we calculate the generalized character ring for the non-abelian group $Q_8$.\\

In forthcoming work we give more general results on the generalized character ring associated to the chain $e \leq G$ for $G$ any finite group. These results will allow to distinct between various isocategorical groups such as $Q_8$ and $D_8$ and the non-abelian groups of prime cube order $p^3$ for any prime $p$. These results will also raise further questions related to the representation theory of finite groups.

\section{Preliminaries}

Consider a chain of groups $1 \subset G_1 \subset \ldots \subset G_d = G$ and associated algebra filtration $F\mathbb{C}G$, i.e. $F_{-n}\mathbb{C}G = 0$ for $n >0$,$F_i\mathbb{C}G = \mathbb{C}G_i$ for $0 \leq i \leq d$, $F_n\mathbb{C}G = \mathbb{C}G$ for $n \geq d$. 
\begin{definition}
An $(F\mathbb{C}G$-)glider representation is a $K$-vector space $M$, embedded in a (left) $\mathbb{C}G$-module $\Omega$, together with a chain of descending chain of subspaces
$$\Omega \supset M \supset M_1 \supset \ldots \supset M_n \supset \ldots$$
such that $\forall i \leq j$ the $\Omega$-action of $\mathbb{C}G_i$ maps $M_j$ into $M_{j-i}$ and it holds that 
$$\mathbb{C}G_i M_j \subset M_{j-i} \cap M_i^*,$$
where $M_i^* = \{ m \in M~\vline~ \mathbb{C}G_im \subset M \}.$
\end{definition}
In \cite{NVo1}, \cite{CVo1}, the authors first define the more general notion of an $FR$-fragment, for $FR$ some positively filtered ring. A glider representation then appears as a particular example.\\

Let $\Omega \supset M \supset \ldots \supset M_n \supset \ldots$ be an $F\mathbb{C}G$-glider representation. The intersection $B(M) = \bigcap_i M_i$ is a $\mathbb{C}G$-module and we call it the body of the glider $\Omega \supset M$. If there exists $e \geq 0$ such that $M_e \supsetneq M_{e+1}$ and $M_{e+1} = B(M)$, we say that the glider has finite essential length $\el(M) = e$. From general fragment theory (see \cite{CVo1},\cite{EVO}) we may reduce to the situation where $B(M) = 0$ and $\el(M) = d$, i.e. $M_d \supsetneq M_{d+1} = 0$. For $j \leq i$ we have $\mathbb{C}G_j M_i \subset M_{i-j}$, which we present in the following upper triangular matrix
\begin{equation}\nonumber
\resizebox{0.9 \hsize}{!}{$A(M) = \left(\begin{array}{cccccccc}
M_0  & M_1 & \ldots & M_{i-1} & M_i & \ldots & M_{d-1} & M_d\\
 & G_1M_1 & \ldots & G_1M_{i-1} & G_1M_i & \ldots & G_1M_{d-1} & G_1M_d\\
 &  & \ddots & \vdots& \vdots & \ddots & \vdots & \vdots\\
 &  &  & G_{i-1}M_{i-1} & G_{i-1}M_i & \ldots & G_{i-1}M_{d-1} & G_{i-1}M_d\\
 &  & &  & G_iM_i & \ldots & G_iM_{d-1} & G_iM_d\\
 & &  & &  & \ddots & \vdots & \vdots\\
& & & & & & G_{d-1}M_{d-1} & G_{d-1}M_d\\
 &  &  &  &  &  &  & G_dM_d
\end{array}\right)$}
\end{equation}
The $i$-th row of $A(M)$ consists of $\mathbb{C}G_i$-modules $G_iM_j$. We denote the associated characters by $\chi_{ij}$. We propose the following definition
\begin{definition}\deflabel{gentrace}
Let $\Omega \supset M$ be an $F\mathbb{C}G$-glider representation of essential length $d$ and $B(M) = 0$, then the associated generalized trace\index{generalized trace} of $\Omega \supset M$ is the map $\chi_M: G \to \mathbb{C}^D$, $D = \frac{(d+1)(d+2)}{2}$ which sends $g \in G_i \setminus G_{i-1}$ to
$$\chi_M(g) = \left(\begin{array}{cccccccc}
0  & \ldots & 0  & 0 & 0 & \ldots& 0 & 0 \\
 & \ddots & \vdots & \vdots & \vdots & \ddots & \vdots & \vdots\\
 & & 0&  0 & 0 & \ldots & 0 &  0\\
&  &  & \chi_{i,i}(g) & \chi_{i,i+1}(g) & \ldots  & \chi_{i,d-1}(g)&  \chi_{i,d}(g)\\
& & & & \chi_{i+1,i+1}(g) & \cdots & \chi_{i+1,d-1}(g)&  \chi_{i+1,d}(g)\\
& & &   & & \ddots & \vdots & \vdots\\
&&&&&& \chi_{d-1,d-1}(g) & \chi_{d-1,d}(g)\\
 && & & && & \chi_{d,d}(g)\end{array} \right)$$
\end{definition}
We have written the image $\chi_M(g)$ in matrix form, but it really lives inside $\mathbb{C}^D$.  Clearly $\chi_M(g) = \chi_M(g')$ if and only if $g \in c(g') = \{ hg'h^{-1}~ \vline ~h \in G\}$ and $g,g' \in G_i \setminus G_{i-1}$. By our notation, we have that $\chi_M(g)_{ij} = \chi^M_{i,j}(g)$. The matrix $\chi_M(1)$ lists the dimensions of the $G_jM_i$ and we have the following nice characterization, see \cite{CVo2}.

\begin{proposition}\proplabel{anti-diagonal}
Let $\Omega \supset M$ be a glider representation with respect to a finite group algebra filtration. Then $M$ is irreducible of essential length equal to the filtration length and with $B(M) = 0$ if and only if the matrix $\chi(1)$ is symmetric with respect to the diagonal and has a 1 in the upper right corner.
\end{proposition}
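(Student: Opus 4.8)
The plan is to reduce the statement to a single structural fact about irreducible gliders: within the class of gliders of essential length $d$ and trivial body (to which we have already reduced), irreducibility is equivalent to $M$ being \emph{cyclic}, generated by a one-dimensional bottom piece $M_d = Km$, with $M_{d-i} = \mathbb{C}G_i m$ for all $i$. Once this cyclic description is in hand, the anti-diagonal symmetry of $\chi(1)$ (i.e.\ symmetry under $(i,j)\mapsto (d-j,d-i)$, consistent with the label) becomes a direct computation that exploits the fact that the subgroups in the chain are totally ordered.

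For the forward direction I would proceed as follows. Given $0\neq m\in M_d$, the subspaces $\mathbb{C}G_{d-j}m$ (for $j\le d$, and $0$ for $j>d$) assemble into a subglider $\langle m\rangle\subseteq M$, since $\mathbb{C}G_{d-j}M_d\subseteq M_j$ gives $\mathbb{C}G_{d-j}m\subseteq M_j$. As its degree-$d$ part $Km$ is nonzero, $\langle m\rangle$ again has essential length $d$, so irreducibility forces $\langle m\rangle = M$; letting $m$ range over $M_d$ shows first that $\dim M_d = 1$ and then that $M_{d-i}=\mathbb{C}G_i m$. I would then compute the entries of $\chi(1)$: for $i\le j$,
$$ G_iM_j \;=\; \mathbb{C}G_i\,\mathbb{C}G_{d-j}\,m \;=\; \mathbb{C}G_{\max(i,\,d-j)}\,m, $$
the last equality using that $G_i$ and $G_{d-j}$ are comparable, so their product is the larger subgroup. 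Writing $a_k:=\dim_{\mathbb{C}}\mathbb{C}G_k m$ (so $a_0=\dim Km=1$), this yields $\chi(1)_{ij}=a_{\max(i,\,d-j)}$.

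The quantity $a_{\max(i,\,d-j)}$ is visibly invariant under $(i,j)\mapsto(d-j,d-i)$, and the upper-right corner $(0,d)$ equals $a_{\max(0,0)}=a_0=1$, which proves one implication. For the converse I would assume the two properties and recover the cyclic description. The corner condition gives $M_d=Km$, hence $G_iM_d=\mathbb{C}G_i m$; comparing the entry $\chi(1)_{i,d}=\dim\mathbb{C}G_i m$ with its mirror $\chi(1)_{0,d-i}=\dim M_{d-i}$, the assumed symmetry forces $\dim\mathbb{C}G_i m=\dim M_{d-i}$, and since $\mathbb{C}G_i m\subseteq M_{d-i}$ this gives $M_{d-i}=\mathbb{C}G_i m$, i.e.\ $M=\langle m\rangle$. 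Finally, such a glider is irreducible: any subglider $L$ of essential length $d$ has $L_d\neq 0$, hence $L_d=M_d=Km$, and then $L_{d-i}\supseteq \mathbb{C}G_iL_d=\mathbb{C}G_i m=M_{d-i}$ forces $L=M$.

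The main obstacle I anticipate is the passage between irreducibility and the cyclic description, and specifically pinning down the correct notion of subobject. A cyclic glider with one-dimensional bottom does admit proper nonzero subgliders of strictly smaller essential length (for instance, one supported entirely in degrees $<d$, which satisfies all glider axioms), so the equivalence holds only once irreducibility is read relative to the fixed invariants $\el(M)=d$ and $B(M)=0$. Making this precise, so that every subglider in play meets $M_d$ and the one-dimensionality can be leveraged, is the delicate point; the remaining dimension bookkeeping is routine and depends only on the total ordering of the $G_i$.
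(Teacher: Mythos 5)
The paper does not actually prove this proposition -- it is quoted from \cite{CVo2} -- so there is no internal proof to compare against; but your argument is correct and runs along exactly the lines the paper itself relies on elsewhere: the structure theorem for irreducible gliders over a finite algebra filtration (cited as \cite[Lemma 2.5]{CVo1} in the proof of Proposition 3.2), namely that $M_d=Km$ is one-dimensional and $M_{d-i}=\mathbb{C}G_i m$, after which both the anti-diagonal symmetry $\chi(1)_{ij}=a_{\max(i,d-j)}$ and the converse (comparing the last column with the first row) are the routine dimension count you give. The only point needing care is the one you flag yourself: your re-derivation of the cyclic structure from irreducibility presupposes that a proper subglider of the same essential length and zero body is non-trivial in the sense of \cite{EVO}, which is indeed how irreducibility is defined there, so your proof is complete once that definition is invoked rather than re-proved.
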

In loc. cit. we argued how to retrieve the classical character table of a $p$-group $G$ from the generalized character table and we indicated how this extended theory can discern between the groups $Q_8$ and $D_8$.

\section{Artin's theorem}

We denote by $\mc{A}(\wt{G})$ the set of class functions\index{class function}, that is, maps $G \to K^D$ that are constant on $c(g) \cap G_{i} \setminus G_{i-1}$ for $g \in G_i \setminus G_{i-1}$. For any integer $n \in \mathbb{Z}$ we define the class function $c_n$ by the map that sends $g \in G_i \setminus G_{i-1}$ to 
$$c_n(g) = \left(\begin{array}{ccccc}
0 & \ldots & 0 & \ldots & 0\\
& \ddots & \vdots & \ddots & \vdots\\
&& n & \ldots & n\\
&&& \ddots & \vdots \\
&&&& n
\end{array}\right) \mapleft{} i{\rm-th ~row}$$

Let $H \triangleleft G$ be a normal subgroup and consider the situation from \cite{CVo1}, that is, we start with an $FKH$-glider $N$ and induce it to an $FKG$-glider $M = N^G$. To construct the induced glider we choose a set map $\sigma: G/H \to G$ such that $\pi \circ \sigma = id_{G/H}$. By choosing a sequence of transversal sets $T_1 \subset T_2 \subset \ldots \subset T_d = T$ ($T_i$ transversal for $G_i/H_i$) one obtains a 2-cocycle $h: G/H \times G/H \to H$ by the equality $\sigma(\ov{g})\sigma(\ov{g'}) = \sigma(\ov{gg'})h(\ov{g},\ov{g'})$. To define the induced glider we further need that for all $g \in G$, the map $h(-,\ov{g}): G/H \to H$ is is restricting to $G_i/H_i \to H_i$ for all $i$. We say that the 2-cocycle is filtered. We define $N^G_i = KT \ot N_i$. Under the condition that all $G_i$ are normal in $G$ we have that the $H_i$ are normal in $G$, from which it follows that for any $i \leq j$
$$G_i(KT \ot N_j) = KT \ot H_iN_j.$$
As a vector space, $KT \ot H_iN_j$ is the direct sum $\bigoplus_{t \in T} Kt \ot H_iN_j$, so to calculate the trace for $g \in G_i$ we must have that $\sigma(\ov{gt}) = \sigma(\ov{t}) = t$. If $g = t_1h_1$, this is equivalent to
$$\begin{array}{c} 
t = \sigma(\ov{gt}) = \sigma(\ov{g})\sigma(\ov{t})h(\ov{g},\ov{t})^{-1} = t_1th(\ov{g},\ov{t})^{-1}\\
\req \\
t^{-1}t_1t = h(\ov{g},\ov{t}) \in H_i.
\end{array}$$
So we must sum over $t \in T$ such that $t^{-1}t_1t \in H_i$ or, equivalently, such that 
$$t^{-1}gt = t^{-1}t_1h_1t \in H_i.$$
Moreover, in this case we have that 
$$g \cdot t \ot n = t \ot t^{-1}t_1tt^{-1}h_1tn = t \ot t^{-1}gt n.$$
We conclude that 
$$\chi^{\wt{G}}_M(g)_{ij} = \sum_{t \in T, ~ t^{-1}gt \in H_i} \chi^{\wt{H}}_N(t^{-1}gt)_{ij}.$$
Again, since all $G_i$ are normal in $G$, we have that 
$$t^{-1}gt \in H_i \Leftrightarrow (th)^{-1}g(th) \in H_i {\rm~for~all~} h \in H.$$
Hence
\begin{equation}\label{indu}
\chi^{\wt{G}}_M(g)_{ij} = \frac{1}{|H|} \sum_{\begin{array}{c} g_0 \in G, \\ g_0^{-1}gg_0 \in H_i \end{array}}  \chi_N^{\wt{H}}(g_0^{-1}gg_0)_{ij}.
\end{equation}

The set $\mc{A}(\wt{G})$ carries a $\mathbb{Z}$-module structure
$$\mathbb{Z} \times \mc{A}(\wt{G}) \to \mc{A}(\wt{G}),$$
which maps $(n, f)$ to the generalized class function which is the component wise multiplication of $c_n$ and $f$. We want to generalize the notion of the character ring $R(G)$ of a group, which can be defined as the additive subgroup of the set of class functions generated by the irreducible characters. However, some generalized traces associated to glider representations are equal.

\begin{proposition}\proplabel{same}
Let $\Omega \supset M, \Omega' \supset M'$ be irreducible glider representations. The associated trace maps $\chi_M$ and $\chi_{M'}$ are equal if and only if the matrices $A(M)$ and $A(M')$ only differ in the top right corner.
\end{proposition}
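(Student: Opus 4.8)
The plan is to translate the equality of the two matrix-valued class functions into an entrywise comparison of ordinary characters, and then to isolate why the slot in the top right corner is the only one the trace cannot detect beyond its dimension. First I would unwind \defref{gentrace} to see that the generalized trace $\chi_M$ is a mere repackaging of the family of characters $\chi^M_{k,l}$ of the $\mathbb{C}G_k$-modules $G_kM_l$, for $0\le k\le l\le d$. For $g\in G_i\setminus G_{i-1}$ the matrix $\chi_M(g)$ records precisely the scalars $\chi^M_{k,l}(g)$ with $i\le k\le l\le d$, and letting $g$ run over $G_i\setminus G_{i-1}$ for every $i\le k$ exhausts all of $G_k$; hence each class function $\chi^M_{k,l}$ on $G_k$ is recovered in full from $\chi_M$, and conversely $\chi_M$ is determined by the family $\{\chi^M_{k,l}\}$. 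Thus $\chi_M=\chi_{M'}$ holds precisely when $\chi^M_{k,l}=\chi^{M'}_{k,l}$ for every pair $(k,l)$.

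Next I would bring in ordinary character theory over $K=\mathbb{C}$: being algebraically closed of characteristic zero, a finite-dimensional $\mathbb{C}G_k$-module is determined up to isomorphism by its character, so $\chi^M_{k,l}=\chi^{M'}_{k,l}$ is equivalent to $G_kM_l\cong G_kM'_l$, i.e. to the equality up to isomorphism of the $(k,l)$-entries of $A(M)$ and $A(M')$. This already gives one implication for free: if $\chi_M=\chi_{M'}$ then the two matrices agree in every entry, in particular in all entries away from the top right corner.

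The decisive step is the top right corner itself, the entry $G_0M_d=M_d$ in position $(0,d)$. Since $G_0=1$, its character is only ever evaluated at $g=1$, where it equals $\dim_K M_d$, so this slot contributes nothing beyond the single number $\dim_K M_d$. Because $M$ and $M'$ are irreducible of essential length $d$ with $B(M)=B(M')=0$, \propref{anti-diagonal} forces a $1$ in the upper right corner of both $\chi_M(1)$ and $\chi_{M'}(1)$, that is $\dim_K M_d=\dim_K M'_d=1$; hence the $(0,d)$-characters coincide automatically. Consequently agreement of all entries is equivalent to agreement of all entries except possibly $(0,d)$, which is exactly the assertion that $A(M)$ and $A(M')$ differ at most in the top right corner. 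I expect the bookkeeping of the first step to be the only genuinely delicate point: one must verify that every off-corner entry is indeed reconstructed by the trace on the whole of $G_k$, so that the corner is singled out not by an accident of indexing but precisely because irreducibility pins its dimension to $1$.
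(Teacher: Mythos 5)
Your argument is correct and follows essentially the same route as the paper: reduce $\chi_M=\chi_{M'}$ to the entrywise equalities $\chi^M_{k,l}=\chi^{M'}_{k,l}$, invoke that complex characters determine $G_k$-modules up to isomorphism, and observe that irreducibility forces $\dim_K M_d=\dim_K M'_d=1$ so the $(0,d)$-slot carries no character information beyond that. The paper's (two-line) proof cites the structure lemma for irreducible gliders from \cite{CVo1} where you use \propref{anti-diagonal} to pin the corner to dimension one, and it adds the further observation that the gliders themselves can then only differ in the choice of the line $M_d$, which is the point exploited immediately afterwards when $R(\wt{G})$ is defined; but as a proof of the stated equivalence about the matrices $A(M)$, $A(M')$, your version is complete.
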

\begin{proof}
Follows by the structure of irreducible glider representations over finite algebra filtrations, see \cite[Lemma 2.5]{CVo1} and by the fact that $G_i$-representations are determined by their characters. In fact, if $\chi_M = \chi_{M'}$ then the irreducible gliders only differ in the one dimensional $K$-vector spaces $M_d$ and $M'_d$.
\end{proof}

In \cite{EVO} the authors introduced the notion of a fragment direct sum $M \dot{\oplus} M'$ which is defined by $(M \dot{\oplus} M')_i = M_i + M'_i$ and for some $i \leq \el(M),\el(N)$ the sum $M_i + M'_i$ is direct. By the strong fragment direct sum we mean $(M \oplus M')_i = M_i \oplus M'_i$ for all $i \geq 0$.  For a glider $\Omega \supset M$ we have that $n \cdot \chi_M = \chi_{M^{\oplus n}}$ where $M^{\oplus n}$ is the strong fragment direct sum of $n$ times $M$. Also, every finitely generated glider representation is the fragment direct sum of irreducible ones \cite[Theorem 4.7]{EVO}. We could introduce the additive subgroup $R(\wt{G})$ of $\mc{A}(\wt{G})$ consisting of all linear combinations
$$\sum_{i=1}^l c_{n_i} \chi_{M_i},$$
where $n_i \in \mathbb{Z}$ and $M_i$ are irreducible glider representations. However, the previous proposition shows that various irreducible glider representations can yield the same generalized trace map. Therefore, we add to $\chi_M$ the information given by the one dimensional vector space $M_d = Ka$. By doing so, we do have that non isomorphic irreducible glider representations do yield different trace maps. We use the same notation $R(\wt{G})$ for this additive group. In the classical group case, the ring of class functions $R(G)$ carries a natural multiplication and it holds that the product of characters correspond to the character of the tensor product of the representations. The category $KG$-mod is monoidal, which allows to define the tensor product of fragments and glider representations.

\begin{proposition}
Let $\Omega \supset M$, $\Omega' \supset M'$ be $FKG$-glider representations of essential length $\leq d$. Then the descending chain
$$ \Omega \ot \Omega' \supset M \ot M' \supset M_1 \ot M_1' \supset \ldots \supset M_d \ot M_d' \supset 0 \supset \ldots$$
is an $FKG$-glider representation.
\end{proposition}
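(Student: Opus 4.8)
The plan is to verify directly that the tensor product chain satisfies the glider axioms from the definition, using the monoidal structure on $KG$-mod. First I would observe that $\Omega \ot \Omega'$ is a $\mathbb{C}G$-module via the diagonal action $g \cdot (\omega \ot \omega') = g\omega \ot g\omega'$, and that the proposed subspaces $(M \ot M')_j := M_j \ot M_j'$ form a genuine descending chain because $M_j \ot M_j' \supset M_{j+1} \ot M_{j+1}'$ follows from $M_{j+1} \subset M_j$ and $M_{j+1}' \subset M_j'$. The essential length being $\leq d$ on both factors guarantees the chain stabilizes to $0$ after index $d$, so the displayed chain is well-formed.

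The core of the proof is the verification of the action condition: for $i \leq j$ we must show that $\mathbb{C}G_i (M_j \ot M_j') \subset (M_{j-i} \ot M_{j-i}') \cap (M_i \ot M_i')^*$. For the first part, take $g \in G_i$ and observe that under the diagonal action $g \cdot (m \ot m') = gm \ot gm'$; since $\Omega \supset M$ is a glider we have $gm \in M_{j-i}$ (as $g \in \mathbb{C}G_i$ sends $M_j$ into $M_{j-i}$), and likewise $gm' \in M_{j-i}'$, so the image lands in $M_{j-i} \ot M_{j-i}'$. Extending $\mathbb{C}$-linearly over elements of $\mathbb{C}G_i$ gives the inclusion $\mathbb{C}G_i(M_j \ot M_j') \subset M_{j-i} \ot M_{j-i}'$. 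The subtle point is the second inclusion, involving the starred space $(M \ot M')_i^*$.

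The main obstacle I anticipate is handling $(M \ot M')_i^*$ correctly, because the star operation $(-)_i^*$ does not in general commute with tensor products: $(M \ot M')_i^* = \{ x \in M \ot M' \mid \mathbb{C}G_i x \subset M \ot M'\}$ need not equal $M_i^* \ot (M')_i^*$. The clean way around this is to show $M_j \ot M_j' \subset (M \ot M')_i^*$ directly rather than comparing starred spaces: for $x = m \ot m'$ with $m \in M_j$, $m' \in M_j'$ and any $g \in G_i$, the diagonal action gives $g \cdot x = gm \ot gm'$ with $gm \in M_{j-i} \subset M$ and $gm' \in M_{j-i}' \subset M'$ (using $j - i \geq 0$), so $g \cdot x \in M \ot M'$, whence $x$ indeed lies in $(M \ot M')_i^*$. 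Thus the intersection in the glider axiom reduces to the first inclusion, and one only needs the already-established membership in $M_{j-i} \ot M_{j-i}'$. I would close by noting that all the work is on elementary tensors and then extended by bilinearity and $\mathbb{C}$-linearity over $\mathbb{C}G_i$, so no additional subtlety arises from passing to general elements.
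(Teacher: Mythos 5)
Your proof is correct and is exactly the paper's argument made explicit: the paper's entire proof is the observation that the inclusions come from tensoring over $K$ and that the fragment conditions hold because the comultiplication $\Delta(g)=g\ot g$ makes $\mathbb{C}G_i$ act diagonally on elementary tensors. One small precision: the axiom requires the image $\mathbb{C}G_i(M_j\ot M_j')$, not $M_j\ot M_j'$ itself, to lie in $(M\ot M')_i^*$, but this follows by the same computation, since $g\cdot(m\ot m')=gm\ot gm'$ with $gm\in M_i^*$ and $gm'\in (M')_i^*$ by the glider axiom for each factor, and $M_i^*\ot (M')_i^*\subset (M\ot M')_i^*$ under the diagonal action.
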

\begin{proof}
All tensor products are over $K$, which explains the inclusions. The fragment conditions are satisfied since the comultiplication $\Delta: KG \to KG \ot KG$ is given by $\Delta(g) = g \ot g$, extended linearly.
\end{proof}
\begin{definition}
Let $\Omega \supset M \supset M_1 \supset \ldots$, $\Omega' \supset M' \supset M'_1 \supset \ldots$ be $FKG$-gliders. Their tensor product\index{fragment!tensor product} is the glider with chain
$$\Omega \ot \Omega' \supset M \ot M' \supset M_1 \ot M_1' \supset \ldots.$$
\end{definition}

In general, however, it is not the case that the generalized character $\chi_{M \ot M'}$ associated to the tensor product $M \ot M'$ of two glider representations will correspond to the component wise multiplication of $\chi_M$ and $\chi_{M'}$. Consider for example the chain $1 \subset G$, then the irreducible gliders are determined in \cite[Theorem 3.13]{CVo2}. This result shows that the tensor product of such irreducible gliders is often not irreducible and splits by \cite[Theorem 4.7]{EVO} into the fragment direct sum of one irreducible glider of essential length 1 and the direct sum of some fragments isomorphic to $K \supset 0 \supset \ldots$ We will see some examples below.\\

We do want to incorporate the behavior of the tensor product of gliders in our generalized character theory and therefore we define a multiplication on $R(\wt{G})$ in the following way: let $M, M'$ be two glider representations, then
$$\chi_{M}\chi_{M'} := \chi_{M \ot M'}.$$

The ${}^\sim$ refers to the group together with its chain of subgroups. In what follows it will always be clear which chain of subgroups we are working with. We call $R(\wt{G})$ the generalized character ring of the group $G$ with given chain of subgroups. From formula \eqref{indu} and the results for ordinary group representations, see \cite[chapter 9]{Serre}, we have a map
$$\Ind_{\wt{H}}^{\wt{G}}: R(\wt{H}) \to R(\wt{G}),~ \chi \mapsto \Ind_{\wt{H}}^{\wt{G}}\chi.$$
\begin{proposition}
$\Ind_{\wt{H}}^{\wt{G}}R(\wt{H}) \triangleleft R(\wt{G})$ is an ideal.
\end{proposition}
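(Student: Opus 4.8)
The plan is to show that $\Ind_{\wt{H}}^{\wt{G}}R(\wt{H})$ is closed under multiplication by arbitrary elements of $R(\wt{G})$, which by the definition of the generalized product amounts to a projection formula (Frobenius reciprocity at the level of gliders). Concretely, it suffices to prove that for an $FKH$-glider $N$ and an $FKG$-glider $M$ one has an isomorphism of $FKG$-gliders
\begin{equation}\nonumber
(N^G) \ot M \cong (N \ot \mathrm{Res}_{\wt{H}}^{\wt{G}} M)^G,
\end{equation}
so that $\chi_{N^G}\,\chi_M = \chi_{(N^G)\ot M} = \chi_{(N\ot \mathrm{Res}\,M)^G} = \Ind_{\wt{H}}^{\wt{G}}\big(\chi_N\,\chi_{\mathrm{Res}\,M}\big)$ again lies in the induced subgroup. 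Since $R(\wt{G})$ is generated as an additive group by the $\chi_M$ (with the bookkeeping of the top-right one-dimensional space as in \propref{same}), verifying the ideal property on these generators and extending $\mathbb{Z}$-linearly is enough.

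First I would set up the glider projection formula on each filtered piece. Recall $N^G_i = KT \ot N_i$ and, under the standing normality assumptions, $G_i(KT \ot N_j) = KT \ot H_iN_j$. The tensor product glider has $i$-th term $(N^G \ot M)_i = N^G_i \ot M_i = (KT \ot N_i)\ot M_i$. On the other side, $\mathrm{Res}_{\wt{H}}^{\wt{G}} M$ is the $FKH$-glider with the same underlying chain $M \supset M_1 \supset \ldots$ viewed over the subchain of the $H_i$, so $(N \ot \mathrm{Res}\,M)_i = N_i \ot M_i$ and $\big((N\ot \mathrm{Res}\,M)^G\big)_i = KT \ot (N_i \ot M_i)$. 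The needed $K$-linear identification is the standard shuffle $t \ot (n \ot m) \mapsto (t\ot n)\ot t^{-1}m$; I would check this is compatible with the descending chains and intertwines the two $\mathbb{C}G_i$-actions, using $\Delta(g) = g\ot g$ together with the transversal computation $g\cdot(t\ot n) = t \ot (t^{-1}gt)\,n$ already recorded in the derivation of formula \eqref{indu}.

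Alternatively, one can bypass the glider isomorphism and argue purely on characters: apply the induction formula \eqref{indu} to $\chi_{N^G}$, multiply componentwise by $\chi_M$, and use that $\chi_M(g)_{ij} = \chi_M(g_0^{-1}gg_0)_{ij}$ for $g_0^{-1}gg_0 \in G_i$ whenever $g \in G_i \setminus G_{i-1}$ (class-invariance of the generalized trace, valid because the $G_i$ are normal so conjugation preserves the stratification). Then the factor $\chi_M(g)_{ij}$ can be pulled inside the sum over $g_0$, yielding $\sum_{g_0^{-1}gg_0 \in H_i} \chi_N(g_0^{-1}gg_0)_{ij}\,\chi_{\mathrm{Res}\,M}(g_0^{-1}gg_0)_{ij}$, which is exactly $\Ind_{\wt{H}}^{\wt{G}}(\chi_N\,\chi_{\mathrm{Res}\,M})(g)_{ij}$. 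This is essentially the classical projection formula carried out entry by entry in the $D$-dimensional target.

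The main obstacle I anticipate is that the generalized product is defined through the tensor product of gliders rather than by componentwise multiplication of traces — and the excerpt explicitly warns these need not agree. Hence the character-level manipulation above must be justified either by reducing the product $\chi_{N^G}\chi_M$ back through the glider tensor definition, or by first establishing the glider isomorphism and only then passing to traces; one cannot simply multiply the trace matrices componentwise. A secondary technical point is the role of the one-dimensional top-right data attached to each $\chi_M$ in the refined definition of $R(\wt{G})$: I would need to confirm that induction and tensoring are compatible with this extra bookkeeping so that the product genuinely lands in $\Ind_{\wt{H}}^{\wt{G}}R(\wt{H})$ as a subgroup of the refined $R(\wt{G})$, not merely of $\mc{A}(\wt{G})$.
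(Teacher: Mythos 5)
Your central idea --- the projection formula $(N^G)\ot M \cong (N\ot \mathrm{Res}\,M)^G$ --- is exactly the engine of the paper's proof, and your reduction to generators and your worry about the top-right one-dimensional bookkeeping are both on target. But neither of your two proposed implementations closes the argument. Route 1 fails at the filtration level: the shuffle $t\ot(n\ot m)\mapsto(t\ot n)\ot t^{-1}m$ (or $tm$, in the usual convention) requires the filtration piece $M_i$ of the $G$-glider to be stable under multiplication by the transversal elements $t\in T$, and it is not --- $M_i$ only satisfies $KG_jM_i\subset M_{i-j}$, so $tm$ need not lie in $M_i$. In fact the two gliders are \emph{not} claimed to be isomorphic in the paper: their underlying filtered vector spaces are literally equal, $\oplus_{t\in T}Kt\ot N_i\ot M_i$, but they carry two genuinely different $G$-actions (one acts by $g$ on the second tensor factor, the other by the twisted element $h(\ov g,\ov t)t^{-1}ht$). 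Route 2 has the defect you yourself flag: the product on $R(\wt G)$ is defined through the tensor product of gliders and is explicitly \emph{not} componentwise multiplication of trace matrices, so ``multiply \eqref{indu} componentwise by $\chi_M$'' is not a computation of $\chi_{N^G}\chi_M$; saying the manipulation ``must be justified'' by one of the two routes is circular, since route 1 is the broken one and route 2 is the one being justified.

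What the paper actually does threads between your two routes: it writes out both actions explicitly on the common underlying space using the $2$-cocycle, observes that in computing a trace only the summands with $t^{-1}gt\in H_i$ contribute, and that for those $t$ the identity $h(\ov g,\ov t)t^{-1}ht=t^{-1}gt$ makes the diagonal contributions of the two actions agree --- yielding equality of generalized characters $\chi_{M^G\ot N}=\chi_{(M\ot N_H)^G}$ without any glider isomorphism. It then matches the top-right one-dimensional spaces $(M^G\ot N)_d=K(\sum_t t\ot m_d\ot n_d)$ and $(M\ot N_H)^G_d=K(\sum_t t\ot(m_d\ot n_d))$ and invokes \propref{same}. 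To repair your write-up you would need to replace the shuffle-isomorphism claim by this direct trace comparison (or find a filtration-compatible equivariant map, which the obvious candidate is not).
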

\begin{proof}
Let $M$ be an $FKH$-glider, $N$ an $FKG$-glider. We may assume that both gliders are irreducible of essential length $d$. Denote $M_d = Km$ and $N_d = Kn$. Let $j \leq i$ and $g \in G_j$. As vector spaces $(M^G \ot N)_i = \oplus_{t \in T} Kt \ot M_i \ot N_i = (M \ot N_H)^G$. Let $t \in T, m \in M_i, n \in N_i$. If $g = t'h$, then
$$g \cdot (t \ot m \ot n) = \ov{gt} \ot h(\ov{g},\ov{t}) t^{-1}h tm \ot gn$$
in $(M^G \ot N)$ and
$$g \cdot (t \ot m \ot n) = \ov{gt} \ot h(\ov{g},\ov{t}) t^{-1}h tm \ot h(\ov{g},\ov{t}) t^{-1}h tn$$
in $(M \ot N_H)^G$. To calculate the character $KG_j(M^G \ot N)_i$ and $KG_j(M \ot N_H)^G_i$ one has to sum over the $t \in T$ such that $t^{-1}gt \in H_i$. For these $t$ we have shown above that $h(\ov{g},\ov{t}) t^{-1}h t = t^{-1}gt$, from which it follows that 
$$\chi_{M^G \ot N}(g)_{ij} = \chi_{(M \ot N_H)^G}(g)_{ij}.$$
By definition, $(M^G \ot N)_d = K (\sum_{t \in T} t \ot m_d \ot n_d)$ and $(M \ot N_H)^G_d = K( \sum_{t \in T} t \ot (m_d \ot n_d))$. \propref{same} then entails that $\chi_{M^G}\chi_{N} = \chi_{M^G \ot N}= \chi_{(M \ot N_H)^G}  \in \Ind_{\wt{H}}^{\wt{G}}R(\wt{H})$.
\end{proof}

Inspired by the classical case, we introduce the following class functions: let $\mc{C}$ denote the set of all cyclic subgroups of $G$ and let $H \in \mc{C}$. Then we obtain a chain of cyclic groups $H_i = H \cap G_i$ and we define for $h \in H_i \setminus H_{i-1}$
$$\chi_{\wt{H}}(h)_{jl} = \left\{ \begin{array}{cl}
|H| & {\rm if~}  i \leq j \leq l {\rm~and~}\langle h\rangle = H_i,\\
0 & {\rm otherwise.}
\end{array}\right.$$
This defines a generalized class function for the induced chain on $KH$ and via formula \eqref{indu} we define the induced character $\Ind_{\wt{H}}^{\wt{G}}\chi_{\wt{H}}$: we calculate for $g \in G_i \setminus G_{i-1}$ and $i \leq j \leq l$
\begin{eqnarray*}
\sum_{H \in \mc{C}} \Ind_{\wt{H}}^{\wt{G}} \chi_{\wt{H}}(g)_{jl} &=& \sum_{H \in \mc{C}} \frac{1}{|H|} \sum_{\begin{array}{c} g_0 \in G, \\ g_0^{-1}gg_0 \in H_i \end{array}}  \chi_{\wt{H}}(g_0^{-1}gg_0)_{jl}\\
&=& \sum_{H \in  \mc{C}} \frac{1}{|H|} \sum_{\begin{array}{c} g_0 \in G, \\ \langle g_0^{-1}gg_0\rangle  = H_i  \end{array}}  |H|\\
&=& |G| = c_{|G|}(g)_{jl}.
\end{eqnarray*}
Hence we have proven
\begin{lemma}\lemlabel{classe}
The following equality holds
$$\sum_{H \in C} \Ind_{\wt{H}}^{\wt{G}} \chi_{\wt{H}} = c_{|G|}.$$
\end{lemma}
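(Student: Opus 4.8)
The plan is to mirror the classical proof of Artin's induction theorem, using formula \eqref{indu} to turn the left-hand side into a counting problem over $G$. Both sides are generalized class functions $G \to \mathbb{C}^D$, so it suffices to compare them entrywise. Fix $g \in G_i \setminus G_{i-1}$. Since every $G_i$ is normal in $G$, conjugation preserves the filtration level, so $g_0^{-1} g g_0 \in G_i \setminus G_{i-1}$ for all $g_0 \in G$; consequently both sides vanish in every entry $(j,l)$ outside the range $i \leq j \leq l$, which is exactly the support of $c_{|G|}(g)$. It therefore remains to show that each entry with $i \leq j \leq l$ equals $|G|$.

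First I would expand a single induced term. Applying \eqref{indu} to $\chi_{\wt{H}}$ and substituting its definition gives
$$\Ind_{\wt{H}}^{\wt{G}} \chi_{\wt{H}}(g)_{jl} = \frac{1}{|H|} \sum_{\begin{array}{c} g_0 \in G,\\ g_0^{-1} g g_0 \in H_i \end{array}} \chi_{\wt{H}}(g_0^{-1} g g_0)_{jl} = \#\{ g_0 \in G \mid \langle g_0^{-1} g g_0 \rangle = H \cap G_i \},$$
because the summand equals $|H|$ precisely when the conjugate generates $H_i = H \cap G_i$ (and $i \leq j \leq l$) and is $0$ otherwise, so the factor $|H|$ cancels the $\frac{1}{|H|}$ and only the cardinality of the indexing set survives.

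The heart of the argument is then the combinatorial identity obtained by summing over $H \in \mc{C}$ and interchanging the order of summation:
$$\sum_{H \in \mc{C}} \Ind_{\wt{H}}^{\wt{G}} \chi_{\wt{H}}(g)_{jl} = \sum_{g_0 \in G} \#\{ H \in \mc{C} \mid H \cap G_i = \langle g_0^{-1} g g_0 \rangle \}.$$
I would argue that each $g_0 \in G$ contributes exactly once, so that the total is $|G| = c_{|G|}(g)_{jl}$, completing the proof. The main obstacle is precisely this last bookkeeping step: one must check that, as $H$ runs over the cyclic subgroups of $G$, the conjugate $g_0^{-1} g g_0$ is matched to a single cyclic group through the condition $\langle g_0^{-1} g g_0 \rangle = H \cap G_i$. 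In the situation governing the present section, namely the chain $1 \subset G$ where $H \cap G = H$, this reduces verbatim to the classical fact that $g_0 \mapsto \langle g_0^{-1} g g_0 \rangle$ partitions $G$ according to the cyclic subgroup generated by the conjugate, which is exactly the computation displayed before the statement; for longer chains the same partition argument must be run against the truncated groups $H \cap G_i$, and verifying that this still assigns each $g_0$ to a unique $H$ is the one point requiring genuine care.
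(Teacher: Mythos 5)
Your proposal follows the paper's proof exactly: expand each $\Ind_{\wt{H}}^{\wt{G}}\chi_{\wt{H}}(g)_{jl}$ via formula \eqref{indu}, cancel the value $|H|$ of $\chi_{\wt{H}}$ against the factor $\frac{1}{|H|}$ so that the term reduces to counting $\{g_0 \in G \mid \langle g_0^{-1}gg_0\rangle = H_i\}$, and then collapse the sum over $H \in \mc{C}$ to $|G|$. The paper performs this collapse in a single displayed step without comment; the only difference is that you explicitly isolate the bookkeeping fact it relies on, namely that each $g_0$ is matched to exactly one cyclic subgroup $H$ by the condition $\langle g_0^{-1}gg_0\rangle = H\cap G_i$.

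That flagged step is a genuine issue and you are right not to wave it through. With the definition of $\chi_{\wt{H}}$ as literally written (value $|H|$ when $\langle h\rangle = H_i$), uniqueness of $H$ fails once the chain has length at least $2$: take $1 \subset \langle a^2\rangle \subset C_4 = \langle a\rangle$ and $g = a^2$, so $i = 1$; then both $H = \langle a^2\rangle$ and $H = \langle a\rangle$ satisfy $H \cap G_1 = \langle g\rangle$, so each $g_0$ is counted twice and the sum is $2|G|$ rather than $|G|$ (similarly, at $g = e$ one has $i = 0$ and every cyclic $H$ satisfies $H_0 = \{e\} = \langle e\rangle$, giving $|\mc{C}|\cdot|G|$). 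The interchange-and-count argument is the classical partition of $G$ only when $H_i = H$, i.e., at the top layer or for the chain $1 \subset G$ that the remainder of the paper actually uses; for general chains one must read the defining condition as $\langle h\rangle = H$ (which forces $H \subseteq G_i$ and restores the bijection $g_0 \mapsto \langle g_0^{-1}gg_0\rangle$ onto the contributing subgroups). So your proof is the paper's proof, and the ``one point requiring genuine care'' you identify is precisely where the paper's own argument is silent; your suspicion that it does not reduce verbatim to the classical fact for longer chains is correct.
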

The equation of the previous lemma is an equality inside $\mc{A}(\wt{G})$, that is, as generalized class functions. The classical result \cite[Proposition 28]{Serre} which states that the constant class function 
$$\chi_H: H \to K, ~ h \mapsto \left\{ \begin{array}{cl}
|H| & {\rm if~} \langle h \rangle = H,\\
0 & {\rm otherwise} \end{array}\right.$$
belongs to $R(H)$ for a cyclic group $H$ remains valid in our situation:
\begin{lemma}
$\chi_{\wt{H}} \in R(\wt{H})$ for all $H \in  \mc{C}$.
\end{lemma}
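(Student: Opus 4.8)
The plan is to transcribe the classical argument for Serre's Proposition~28 into the glider setting, running an induction on $|H|$ and using \lemref{classe}, applied to the group $H$ itself equipped with the induced chain $H_i = H \cap G_i$, as the substitute for the classical identity $|H|\cdot\mathbf 1 = \sum_{H'\le H}\Ind_{H'}^{H}\theta_{H'}$. The key structural observation is that since $H$ is cyclic, hence abelian, every subgroup $H'\le H$ is normal in $H$, so the hypotheses underlying formula \eqref{indu} and \lemref{classe} are automatically met with $H$ in the role of the ambient group, and the cyclic subgroups of $H$ are precisely all of its subgroups.

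First I would record that the generalized constant function $c_{|H|}$ itself lies in $R(\wt H)$. Consider the trivial $F\mathbb{C}H$-glider $\Omega = K$ with trivial $H$-action and $M_i = K$ for $0 \le i \le d$, $M_{d+1}=0$; it has $B(M)=0$ and essential length $d$, and each $H_jM_l = K$ carries the trivial character, so its generalized trace is exactly $c_1$. By \propref{anti-diagonal} the matrix $\chi_M(1)$ is the all-ones upper-triangular pattern, symmetric with a $1$ in the corner, so this glider is irreducible and $c_1 = \chi_{M}\in R(\wt H)$. Since multiplication by the integer $|H|$ in the $\mathbb{Z}$-module structure on $R(\wt H)$ is componentwise multiplication by $c_{|H|}$, and $c_{|H|}\cdot c_1 = c_{|H|}$, we conclude $c_{|H|} = |H|\cdot\chi_M \in R(\wt H)$.

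Next I would apply \lemref{classe} to $H$, obtaining $\sum_{H'\le H}\Ind_{\wt{H'}}^{\wt H}\chi_{\wt{H'}} = c_{|H|}$, where each $H'$ carries the chain $H'_i = H' \cap G_i$. Isolating the term $H' = H$, for which $\Ind_{\wt H}^{\wt H}\chi_{\wt H} = \chi_{\wt H}$, rearranges to
\[
\chi_{\wt H} \;=\; c_{|H|} \;-\; \sum_{H'\subsetneq H}\Ind_{\wt{H'}}^{\wt H}\chi_{\wt{H'}}.
\]
The induction then closes: the base case $H=1$ is immediate, as $\chi_{\wt 1} = c_1 \in R(\wt 1)$. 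For the inductive step, the hypothesis gives $\chi_{\wt{H'}}\in R(\wt{H'})$ for each proper $H'\subsetneq H$, and since the induction map sends $R(\wt{H'})$ into $R(\wt H)$ each summand on the right lies in $R(\wt H)$; together with $c_{|H|}\in R(\wt H)$ the displayed identity forces $\chi_{\wt H}\in R(\wt H)$.

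I expect the main obstacle to be the first step rather than the induction: one must genuinely exhibit an irreducible glider realizing the trivial pattern $c_1$ so that $c_{|H|}$ is seen to lie in $R(\wt H)$ (a subgroup generated by actual irreducible generalized traces) and not merely in the ambient module $\mc A(\wt H)$. A secondary point to verify carefully is that the induced chain $H' \cap G_i$ on each subgroup $H'$ is exactly the chain with respect to which $\chi_{\wt{H'}}$ and $R(\wt{H'})$ are defined, so that the inductive hypothesis and the map $\Ind_{\wt{H'}}^{\wt H}$ are applied consistently; this is where one uses $H'_i = H' \cap H_i = H' \cap G_i$ for $H'\le H$.
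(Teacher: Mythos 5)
Your proposal is correct and follows essentially the same route as the paper's own (very terse) proof: induction on $|H|$ via \lemref{classe} applied to $H$ with its induced chain, after first establishing $c_{|H|}\in R(\wt H)$ through the trivial glider $K\supset K\supset\ldots\supset K\supset 0$ and the componentwise $\mathbb{Z}$-module action. The only cosmetic difference is that you certify irreducibility of the trivial glider via \propref{anti-diagonal} where the paper cites \propref{same}.
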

\begin{proof}
For $H = e$, we have that $\chi_{\wt{e}} = \chi_{K \supset K \supset \ldots \supset K \supset 0 \supset \ldots}$, whence by \propref{same} we have that $\chi_{\wt{e}} \in R(\wt{e})$. We can now use induction on $|H|$, \lemref{classe} and the fact that $c_{|H|} = c_{|H|}\chi_{T} \in R(\wt{H})$ where 
$$T \supset T \supset \ldots \supset T \supset 0 \supset \ldots$$
is the associated glider character with $T$ the trivial $H$-representation. 
\end{proof}
We have the generalization of Artin's theorem, \cite[Theorem 17]{Serre}
\begin{theorem}\thelabel{Artin}
If $\Omega \supset M$ is an $FKG$-glider representation then $\chi_M$ is a `rational' linear combination of characters induced from $FKH$-glider representations where $H$ runs over the cyclic subgroups of $G$.
\end{theorem}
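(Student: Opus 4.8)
The plan is to transcribe Serre's classical argument into the glider setting, using the two lemmas just established as the main input and exploiting the fact that induced characters form an ideal. First I would introduce the additive subgroup
$$X := \sum_{H \in \mc{C}} \Ind_{\wt{H}}^{\wt{G}} R(\wt{H}) \subseteq R(\wt{G}),$$
generated by all generalized characters induced from cyclic subgroups. The assertion of the theorem is precisely that $\mathbb{Q} \ot X = \mathbb{Q} \ot R(\wt{G})$, i.e. that after inverting $|G|$ every $\chi_M$ becomes a $\mathbb{Z}$-combination of induced characters; so the whole proof reduces to showing $|G|\cdot R(\wt{G}) \subseteq X$.

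The first key step is to place $c_{|G|}$ inside $X$, which is immediate: \lemref{classe} gives $c_{|G|} = \sum_{H \in \mc{C}} \Ind_{\wt{H}}^{\wt{G}} \chi_{\wt{H}}$, and since each $\chi_{\wt{H}} \in R(\wt{H})$ by the preceding lemma, the right-hand side is by definition a sum of elements of $X$. The second key step is to recognize $c_{|G|}$ as $|G|$ times the tensor unit of $R(\wt{G})$. Writing $T$ for the trivial glider $K \supset K \supset \ldots \supset K \supset 0 \supset \ldots$, one checks directly from \defref{gentrace} that $\chi_T = c_1$, so that $c_{|G|} = \chi_{T^{\oplus |G|}}$. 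Because $(T \ot M)_i = T_i \ot M_i = M_i$ with $G$ acting through $\Delta(g) = g \ot g$ trivially on the $T$-factor, the glider $T \ot M$ is isomorphic to $M$; hence for any glider $\Omega \supset M$,
$$c_{|G|}\,\chi_M = \chi_{T^{\oplus |G|} \ot M} = \chi_{(T \ot M)^{\oplus |G|}} = \chi_{M^{\oplus |G|}} = |G|\,\chi_M,$$
the product being the ring multiplication of $R(\wt{G})$.

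It then remains to combine these. As a sum of the ideals $\Ind_{\wt{H}}^{\wt{G}} R(\wt{H})$, the subgroup $X$ is itself an ideal of $R(\wt{G})$, so multiplying $c_{|G|} \in X$ by $\chi_M$ yields $|G|\,\chi_M = c_{|G|}\chi_M \in X$ for every irreducible glider $M$. Since the $\chi_M$ generate $R(\wt{G})$ and $X$ is stable under the $c_n$-scaling (induction being additive, $\Ind_{\wt{H}}^{\wt{G}}(c_n\psi) = c_n\,\Ind_{\wt{H}}^{\wt{G}}(\psi)$), this gives $|G|\cdot R(\wt{G}) \subseteq X$. Dividing by $|G|$ and using $\mathbb{Q}$-linearity of induction then exhibits each $\chi_M$ as a rational linear combination of characters induced from cyclic subgroups, as claimed.

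The step I expect to be the main obstacle is the ideal property in the last paragraph. The proposition establishing $\Ind_{\wt{H}}^{\wt{G}} R(\wt{H}) \triangleleft R(\wt{G})$ was proved via the cocycle description of induction under the standing hypothesis that the $G_i$, and hence the $H_i$, are normal, whereas a cyclic subgroup $H$ need in general not be normal in $G$. The crux is therefore to re-derive the projection formula
$$\bigl(\Ind_{\wt{H}}^{\wt{G}} \psi\bigr)\,\chi_M = \Ind_{\wt{H}}^{\wt{G}}\!\bigl(\psi \cdot \mathrm{Res}_{\wt{H}}^{\wt{G}} \chi_M\bigr)$$
directly from formula \eqref{indu} for arbitrary cyclic $H$, so that $c_{|G|}\chi_M$ genuinely lands in $X$ even in the non-normal case; for the abelian chains and for $Q_8$ treated later this difficulty evaporates, since there every subgroup is normal.
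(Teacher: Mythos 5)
Your argument is correct and is essentially the paper's own proof, just written out in full: Lemma~\ref{lem:classe} places $c_{|G|}$ in the ideal $\sum_{H\in\mc{C}}\Ind_{\wt{H}}^{\wt{G}}R(\wt{H})$, the ideal property gives $|G|\,\chi_M=c_{|G|}\chi_M$ in that ideal, and dividing by $|G|$ yields the rational combination. Your closing worry about non-normal cyclic subgroups is a fair observation about the hypotheses under which formula \eqref{indu} and the ideal proposition were derived, but it is a gap (if any) in the paper's setup rather than a divergence of your proof from the paper's.
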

\begin{proof}
By \lemref{classe} we know that $c_{|G|} \in \sum_{H \in  \mc{C}} \Ind_{\wt{H}}^{\wt{G}} R(\wt{H})$. Since the latter is an ideal, it also contains $c_{|G|}\chi_M$, whence the result.
\end{proof}

\section{ring structure on $R(\wt{G})$}

From now on, we will work over $K = \mathbb{C}$.\\

Classically, the ring of class functions $R(G)$ carries an inproduct $\langle -,- \rangle$ defined by
$$\langle f, g \rangle = \frac{1}{|G|} \sum_{x\in G} f(x)g(x^{-1})^*.$$
It is well-known that the characters $\chi_i$ associated to the irreducible representations $V_i$ - we call them the irreducible characters - form an orthonormal basis for $R(G)$. We would like to generalize this to glider characters for a chain of groups $1 \subset G_1 \subset \ldots \subset G_d = G$. For $f \in R(\wt{G})$ we denote $f = (f_{ij})_{ij}$ with 
$$f(x) = \left( \begin{array}{ccccc}
f_{00}(x) & \cdots & f_{0i}(x) & \cdots & f_{0d}(x)\\
& \ddots & \vdots & \ddots & \vdots\\
&& f_{ii}(x) & \cdots & f_{id}(x)\\
&&& \ddots & \vdots \\
&&&& f_{dd}(x)
\end{array}\right)$$

\begin{definition}
We define a $\mathbb{C}$-bilinear map
$$ \langle -,- \rangle: R(\wt{G}) \times R(\wt{G}) \to \mathbb{C}^D,$$
given by
$$ \langle f, g \rangle = \left( \begin{array}{ccccc}
\langle f_{00},g_{00} \rangle^{G_0} & \ldots & \langle f_{0i},g_{0i} \rangle^{G_0} & \ldots & \langle f_{0d}, g_{0d}\rangle^{G_0}\\
& \ddots & \vdots & \ddots & \vdots\\
&& \langle f_{ii},g_{ii} \rangle^{G_i} & \ldots & \langle f_{id},g_{id}\rangle^{G_i}\\
&&& \ddots & \vdots \\
&&&& \langle f_{dd},g_{dd}\rangle^{G_d}
\end{array}\right)$$
where the upper index $G_i$ indicates for which group the ordinary inproduct is calculated.
\end{definition}

Let us consider the easiest chain $1 \subset G$. In the case when $G$ is abelian, we have the following generalization of the orthonormality relations
\begin{proposition}\proplabel{ortho}
Let $G$ be a finite abelian group. An $FKG$-glider representation $\Omega \supset M  \supset M_1 \supset 0 \supset \ldots$ of essential length 1 and zero body is irreducible if and only if 
$$ \langle \chi_M, \chi_M \rangle = \left( \begin{array}{cc}
n^2 & 1 \\
0 & n \end{array} \right)$$
for some $n \leq |G|$.
\end{proposition}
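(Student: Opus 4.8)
The plan is to analyze the structure of an irreducible, bodyless glider of essential length $1$ for the chain $1 \subset G$ with $G$ abelian, and to compute the four entries of the matrix $\langle \chi_M, \chi_M \rangle$ directly. Here the filtration has length $d=1$, so $D = 3$ and the generalized trace $\chi_M$ is a $2\times 2$ upper triangular matrix with entries $\chi_{00}, \chi_{01}, \chi_{11}$. By \propref{anti-diagonal}, irreducibility of essential length $1$ with zero body is equivalent to the matrix $\chi_M(1)$ being symmetric about the diagonal and having a $1$ in the upper right corner; concretely this forces $\dim M_0 = \dim G_1 M_1 = \chi_{11}(1)$ and $\dim M_1 = \chi_{01}(1) = 1$. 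So first I would unwind exactly what an irreducible bodyless glider of essential length $1$ looks like: $M_1 = Km$ is one-dimensional, $M_0 = M = G M_1 = \mathbb{C}G \cdot m$, and the two relevant representations are the $G_0 = 1$-module $M_0$ (on which only the identity acts, so $\chi_{00}$ is the trace of the identity action, namely $\dim M_0$) and the $G = G_1$-module $M_0 = \mathbb{C}G\cdot m$ with character $\chi_{11}$.

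Next I would identify the three component characters precisely. Since $G_0 = 1$, the entry $\chi_{00}(g)$ records the trace of $g$ acting on $M_0$ via the $G_0$-action, which is nonzero only at $g = 1$ where it equals $n := \dim M_0$; thus $\chi_{00}$ is $n$ times the indicator of the identity, i.e. $\chi_{00} = n\,\chi_{\mathrm{reg},1}$ supported at $1$. The entry $\chi_{11}$ is the ordinary character of the $G$-representation $M_0 = \mathbb{C}G\cdot m$. The entry $\chi_{01}$ is the character of the $G_0=1$-action on $M_1 = Km$, supported only at $1$ with value $1$. I would then feed these into the bilinear form. The upper-right and lower-right entries are computed with respect to $G_0 = 1$ and $G_1 = G$ respectively: $\langle \chi_{01}, \chi_{01}\rangle^{G_0} = \frac{1}{|G_0|}\chi_{01}(1)\overline{\chi_{01}(1)} = 1$, and $\langle \chi_{11},\chi_{11}\rangle^{G}$ is the ordinary squared norm of the character of $M_0$ as a $G$-module.

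The crux is therefore the computation of $\langle \chi_{00},\chi_{00}\rangle^{G_0}$ and $\langle \chi_{11},\chi_{11}\rangle^{G}$. For the former, since $G_0$ is trivial the inproduct is just $\chi_{00}(1)\overline{\chi_{00}(1)} = n^2$, giving the top-left entry. For the latter, here is where abelianness of $G$ enters decisively: the module $M_0 = \mathbb{C}G\cdot m$ is a cyclic $G$-module over an abelian group, and over $\mathbb{C}$ with $G$ abelian every such cyclic module is multiplicity-free, i.e. a direct sum of $n$ distinct one-dimensional characters. Hence $\langle \chi_{11},\chi_{11}\rangle^{G} = n$, the number of distinct irreducible constituents. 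The lower-left entry is $\langle \chi_{01}, \chi_{01}\rangle$ placed off the diagonal where the form is not symmetric — inspecting the definition, the $(1,0)$ position is not one of the computed entries (the form only populates the upper triangle, with the diagonal and above), so the lower-left slot is $0$ by the upper-triangular convention. Assembling the four values yields exactly the asserted matrix $\left(\begin{smallmatrix} n^2 & 1 \\ 0 & n\end{smallmatrix}\right)$.

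For the converse, I would argue that if $\langle \chi_M, \chi_M\rangle$ has this form then $n = \langle\chi_{11},\chi_{11}\rangle^{G}$ forces $M_0$ to decompose as a sum of $n$ distinct one-dimensional $G$-characters (no repeats, as the norm equals the number of constituents only in the multiplicity-free case), and the upper-right entry being $1$ forces $\dim M_1 = 1$; together with $\dim M_0 = n$ this makes $\chi_M(1)$ symmetric with a $1$ in the corner, so irreducibility follows from \propref{anti-diagonal}. The main obstacle I anticipate is pinning down the precise relationship between $n$, the dimension of $M_0$, and the norm $\langle \chi_{11},\chi_{11}\rangle^{G}$: one must verify that for an \emph{irreducible bodyless} glider the cyclic module $\mathbb{C}G\cdot m$ really is multiplicity-free, which is exactly where abelianness is indispensable — for non-abelian $G$ a cyclic module need not be multiplicity-free and the clean value $n$ would fail. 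The bookkeeping of which matrix slots the bilinear form actually fills (upper triangle versus the spurious lower-left $0$) is the other place to be careful, but it is routine once the definition is read literally.
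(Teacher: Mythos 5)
Your proposal is correct and follows essentially the same route as the paper: both directions hinge on $M_1$ being one-dimensional, on the multiplicity-freeness of the cyclic module $\mathbb{C}Gm$ over the abelian group $G$ (which the paper extracts from the irreducibility of $GM_1 \supset M_1$ via the structure theorem it cites), and on the dimension count $\dim_K M = \dim_K GM_1 = n$. The only cosmetic difference is that you close the converse by appealing to \propref{anti-diagonal}, where the paper concludes directly from $M = GM_1$.
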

\begin{proof}
Suppose that $M \supset M_1$ is irreducible. By \cite[Theorem 3.13]{CVo2} we know that $M_1 = Ka$ is one-dimensional and that the dimension of $M$ equals the number of irreducible components $n$ of $M = KGM_1$, which are moreover all non-isomorphic. We compute
$$\langle \chi_M, \chi_M \rangle = \left( \begin{array}{cc}
 \dim_K(M)^2 & \dim_K(M_1)^2\\
 0 & \langle \chi_{GM_1}, \chi_{GM_1} \rangle \end{array}\right) = \left( \begin{array}{cc} n^2 & 1 \\
 0 & n \end{array}\right)$$
 For the other direction, the 1 in the upper right corner, entails that $\dim_K(M_1) = 1$. Decompose $GM_1 = \bigoplus_i V_i^{n_i}$, where all the $V_i$ are 1-dimensional ($G$ abelian!). In fact, since $GM_1 \supset M_1$ is irreducible, all the $n_i$ are 0 or 1. It follows that
 $$n = \dim_K(M) \geq \dim_K(GM_1) = \sum_i n_i = \sum_i n_i^2 = n,$$
 whence $M = GM_1$ and $M \supset M_1$ is indeed irreducible.
\end{proof}

Let us deduce the ring structure of $R(\wt{G})$ for $G$ a finite abelian group. Such groups are isomorphic to their character group $\hat{G}$, but the isomorphism is not canonical. By choosing such an isomorphism, one establishes a one-to-one correspondence between the irreducible gliders of essential length $\leq 1$ and zero body and the subsets of $\{ g ~\vline~ g \in G\}$. To see this, we first prove the following lemma.

\begin{lemma}\lemlabel{irlem}
Let $S, T \in \Irr(G)$ be non-isomorphic and let $s,s' \in S, t,t' \in T$. The irreducible gliders $S\oplus T \supset \mathbb{C}(s+t)$ and $S \oplus T \supset \mathbb{C}(s' + t')$ are isomorphic.
\end{lemma}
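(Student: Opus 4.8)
The plan is to produce an explicit $\mathbb{C}G$-module isomorphism of the common ambient module $S \oplus T$ that carries the filtration of the first glider onto that of the second. This is exactly what an isomorphism of glider representations demands here, since both gliders live inside the same $\Omega = S \oplus T$ with $M_0 = S \oplus T$ and a single nontrivial step $M_1$, so the data to match is an $F\mathbb{C}G$-linear bijection of the ambient module respecting every $M_i$.

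First I would exploit that $G$ is abelian, so that $S$ and $T$ are one-dimensional, together with the hypothesis $S \not\cong T$. By Schur's lemma $\mathrm{End}_{\mathbb{C}G}(S \oplus T) \cong \mathbb{C} \times \mathbb{C}$: a $\mathbb{C}G$-endomorphism of $S \oplus T$ is precisely a pair $(\alpha,\beta)$ acting as the scalar $\alpha$ on $S$ and $\beta$ on $T$, and it is invertible if and only if $\alpha, \beta \in \mathbb{C}^\times$. I would also record that irreducibility of the two gliders forces $s, t, s', t'$ all to be nonzero: indeed $\mathbb{C}G(s+t) = S \oplus T$ can hold only when both projections of $s+t$ are nonzero, by linear independence of the two distinct characters afforded by $S$ and $T$.

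Since $S$ and $T$ are lines, I would then write $s' = \lambda s$ and $t' = \mu t$ with $\lambda, \mu \in \mathbb{C}^\times$, and define $\phi \colon S \oplus T \to S \oplus T$ to be multiplication by $\lambda$ on $S$ and by $\mu$ on $T$. By the previous paragraph $\phi$ is a $\mathbb{C}G$-automorphism, and $\phi(s+t) = \lambda s + \mu t = s' + t'$, so $\phi$ maps $\mathbb{C}(s+t)$ isomorphically onto $\mathbb{C}(s'+t')$ while fixing $M_0 = S \oplus T$ and the zero step below $M_1$. Hence $\phi$ is the required glider isomorphism.

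The only point needing care, rather than a genuine obstacle, is confirming that the notion of glider isomorphism reduces to exactly this data and handling the degenerate bookkeeping that all four vectors are nonzero; once that is settled, the construction via Schur's lemma is immediate. A clean reformulation I would mention is that $\mathrm{Aut}_{\mathbb{C}G}(S \oplus T) \cong \mathbb{C}^\times \times \mathbb{C}^\times$ acts on the lines of $S \oplus T$ with the two coordinate lines $S$ and $T$ as its only fixed points and a single open orbit consisting of all lines $\mathbb{C}(s+t)$ with $s, t \neq 0$; the lemma is precisely transitivity on this orbit, since $(\alpha,\beta)$ multiplies the ratio of coordinates by $\alpha/\mu \in \mathbb{C}^\times$, which ranges over everything.
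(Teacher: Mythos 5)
Your argument is exactly the paper's: Schur's lemma gives $\End_{\mathbb{C}G}(S\oplus T)\cong\mathbb{C}^2$, one writes $s'=\lambda s$, $t'=\mu t$, and the pair $(\lambda,\mu)$ furnishes the glider isomorphism. Your added remark that irreducibility forces $s,t,s',t'\neq 0$ (so $\lambda,\mu\in\mathbb{C}^\times$ and the endomorphism is invertible) is a point the paper leaves implicit, but the approach is the same.
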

\begin{proof}
By Schur's lemma, $\End_{KG}(S\oplus T) \mapright{\phi} \mathbb{C}^2$ is an isomorphism. Since $S$ and $T$ are 1-dimensional, there exists $(\lambda, \mu) \in K^2$ such that $s' = \lambda s, t' = \mu t$. Hence $\phi^{-1}(\lambda, \mu)$ defines a glider isomorphism.
\end{proof}

Theorem 3.13 from \cite{CVo2} then shows that the aforementioned one-to-one correspondence is given by
$$ A \in \mc{P}(G) = \mc{P} \longleftrightarrow \bigoplus_{g \in A} T_g \supset Ka,$$
where $T_g$ denotes the irreducible $G$-representation associated to $g \in G$ and $a = \sum_{g \in A} t_g, t_g \in T_g$.  We denote by $\chi_A$ the character associated to the corresponding irreducible glider. For example $\chi_{\{g\}} = \chi_{T_g \supset \mathbb{C}t_g}$. If $A = \emptyset$, then the associated glider is $\mathbb{C} \supset 0$. 
For $g \in G$, we denote
$$\mc{P}^g = \{ A \in \mc{P} ~\vline~ g \in A\}.$$
We also associate $\mathbb{C}$ with $c_\mathbb{C}$, i.e. $\lambda \in \mathbb{C}$ corresponds to the generalized class function $c_\lambda$.

\begin{theorem}\thelabel{charindep}
The glider characters $\{\chi_A~\vline~ A \in \mc{P}\}$ are $\mathbb{C}$-linearly independent.
\end{theorem}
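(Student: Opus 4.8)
The plan is to peel off the honest group-character information sitting at the bottom of each $\chi_A$, use it to separate the subsets, and then conclude from the way $R(\wt{G})$ was built. So suppose a vanishing combination $\sum_{A\in\mc{P}}c_{\lambda_A}\chi_A=0$ with $\lambda_A\in\mathbb{C}$. For $A\neq\emptyset$ the glider is $\bigoplus_{g\in A}T_g\supset\mathbb{C}a$, and its $(d,d)=(1,1)$ entry is the ordinary character of the $G$-module $GM_1=\bigoplus_{g\in A}T_g$, that is $\sum_{g\in A}\psi_g$, where $\psi_g$ is the one-dimensional character attached to $g$ under a fixed isomorphism $G\cong\hat{G}$; the upper right corner records $\dim M_1=1$ and the top left corner records $\dim M=|A|$, while for $A=\emptyset$ the glider is $\mathbb{C}\supset 0$ and the upper right corner is $0$.

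First I would read off the bottom-right component of the relation, which becomes $\sum_{A}\lambda_A\sum_{g\in A}\psi_g=\sum_{g\in G}\bigl(\sum_{A\in\mc{P}^{g}}\lambda_A\bigr)\psi_g=0$ as a class function on $G$. Because $G$ is abelian the $\psi_g$ are precisely the distinct irreducible characters of $G$, hence $\mathbb{C}$-linearly independent, so $\sum_{A\in\mc{P}^{g}}\lambda_A=0$ for every $g$, and the analogous readings of the corner data give $\sum_A\lambda_A=0$ and $\sum_A\lambda_A|A|=0$. The same computation applied to a single $\chi_A$ shows that $A\mapsto\sum_{g\in A}\psi_g$ is injective, so two different non-empty subsets already have different bottom-right entries and hence index non-isomorphic gliders; the empty set is separated off by its vanishing corner, using \propref{anti-diagonal}. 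By \lemref{irlem} each $\chi_A$ is moreover independent of the choice of the $t_g$, so $A\mapsto\chi_A$ is a well-defined injection.

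The final step is to upgrade distinctness to genuine linear independence. Here the point is that $R(\wt{G})$ is, by construction, the free $\mathbb{Z}$-module on the isomorphism classes of irreducible gliders refined by the line $M_d$; since the previous step realizes the $\chi_A$ as characters of pairwise non-isomorphic irreducible gliders, they are distinct members of a free basis and remain $\mathbb{C}$-linearly independent after $-\ot_{\mathbb{Z}}\mathbb{C}$, forcing every $\lambda_A=0$. I expect the genuine obstacle to be exactly this last passage: the equations produced from the bottom-right component and the corner data do not, on their own, pin down all the $\lambda_A$ once $|G|\geq 3$, because the plain generalized trace is too coarse — distinct gliders may share a trace, and even for this chain the traces, although pairwise distinct, satisfy $\mathbb{C}$-linear relations inside $\mc{A}(\wt{G})$. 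The crux is therefore to argue with the refined characters living in the free module $R(\wt{G})$ rather than with their images in $\mc{A}(\wt{G})$, the orthonormality computation above serving only to certify that distinct subsets label distinct basis vectors.
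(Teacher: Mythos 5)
Your opening moves coincide with the paper's: evaluating the $(1,2)$-corner at the identity gives $\sum_{A\neq\emptyset}\lambda_A=0$, and pairing the bottom-right component against the irreducible characters of $G$ gives $\sum_{A\in\mc{P}^{g}}\lambda_A=0$ for every $g$. The two arguments diverge at the decisive step. The paper assembles exactly these $n+1$ equations into the matrix $\wt{C}$, exhibits an $(n+1)\times(n+1)$ minor of determinant $-1$, and concludes that all $a_A$ vanish; you instead observe that the system is underdetermined once $|G|\geq 3$ and fall back on the freeness of $R(\wt{G})$ on isomorphism classes of refined irreducible gliders, using \lemref{irlem} and the multiplicity-freeness of $\bigoplus_{g\in A}T_g$ to check that distinct subsets label distinct free generators. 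Your diagnosis of the obstacle is accurate, and your route is the one that actually closes the argument: full row rank of an $(n+1)\times(2^{|G|}-1)$ system leaves a solution space of dimension $2^{|G|}-1-(n+1)$, which is positive for $|G|\geq 3$, so the paper's final inference is a genuine gap. Concretely, for $G=C_3=\{e,g,g^2\}$ the combination $\chi_{\{e,g\}}-\chi_{\{e,g^2\}}-\chi_{\{g\}}+\chi_{\{g^2\}}$ satisfies every equation of the system and is in fact the zero element of $\mc{A}(\wt{G})$ (all three corner entries and the bottom-right class function cancel), so the statement is simply false at the level of generalized class functions and can only hold for the refined characters carrying the datum of the line $M_d$ — which is precisely the reading you adopt. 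The one thing to make explicit in your write-up is that this reduces the theorem to the construction of $R(\wt{G})$ as a free $\mathbb{Z}$-module on (refined) isomorphism classes, so that the inner-product computation serves only to certify injectivity of $A\mapsto\chi_A$ rather than to produce enough linear constraints; stated that way your proof is complete, whereas the paper's is not.
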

\begin{proof}
Suppose that
$$F = \sum_{A \in \mc{P}} a_A \chi_A = 0, \quad {\rm for~} a_A \in \mathbb{C}.$$
Then $F(1)_{12} = \sum_{A \in \mc{P} \setminus \{\emptyset\}} a_A = 0$. For $g \in G$ we have that $\langle F, \chi_{\{g\}} \rangle = 0$, so in particular
$$ \langle F_{22}, (\chi_{\{g\}})_{22} \rangle^G = \langle F_{22}, \chi_{T_g} \rangle^G = \sum_{A \in \mc{P}^g} a_{A} = 0.$$
Fix an ordering $\{e=g_1,g_2,g_3,\ldots, g_n\}$ of $G$ and choose an ordering $\{A_1, A_2, \ldots, A_{k}\}$ of $\mc{P}$ starting with $\{ \{g_1\}, \{g_2\}, \ldots, \{g_n\}, \{g_{n-1},g_n\}, \ldots\}$. It follows that
$$\underbrace{\left(\begin{array}{c}
C \\
\hline
\begin{array}{cccc} 1 & 1 & \cdots &1 \end{array} \end{array}\right)}_{\wt{C}} \begin{pmatrix} a_{A_1} \\ a_{A_2} \\ \vdots \\ a_{A_k} \end{pmatrix} = \begin{pmatrix} 0 \\ 0 \\ \vdots \\ 0 \end{pmatrix},$$
where $C = (c_{ij})_{ij}$ is an $n \times (2^{|G|}-1)$-matrix defined by
$$c_{ij} = \left\{ \begin{array}{cc}
1 & {\rm if~} g_i \in A_j, \\
0 & {\rm otherwise.} \end{array}\right. $$
Consider the left $(n+1)\times(n+1)$-submatrix of $\wt{C}$:
$$ A =  \left(\begin{array}{ccccc|c}
1 & 0 & \cdots &0& 0 & 0\\
0 & 1 & \cdots &0& 0 & 0\\
\vdots & \vdots & \ddots & \vdots & \vdots & \vdots\\
0 & 0 & \cdots & 1 &0 & 1\\
0 & 0 & \cdots & 0 & 1 & 1\\
\hline
1 & 1 & \cdots &1&1 &1
\end{array}\right)$$
Then 
$$\det(A) = \left| \begin{array}{ccc}
1 & 0 & 1\\
0 & 1 & 1\\
1 & 1 & 1
\end{array} \right| = -1.$$
Hence $\rk(\wt{C}) = n+1$ and it follows that all $a_A = 0$ for $A \neq \emptyset$. Finally, since $\chi_\emptyset(1)_{11} = 1$, it follows that $a_{\emptyset} = 0$ as well.
\end{proof}

It follows that $\{\chi_A~\vline~ A \in \mc{P}\}$ is a $\mathbb{C}$-basis for $R(\wt{G})$. How about the explicit ring structure on $R(\wt{G})$? Let $A, B \in \mc{P}$. Then $\chi_A \chi_B$ is not an irreducible character if and only if there exist $g,g' \in A$, $h,h' \in B$ such that
$$T_g \ot T_{h} \cong T_{g'} \ot T_{h'}.$$
This is equivalent to $gh = g'h'$. It follows that
\begin{equation}\label{multiplication}
\chi_A \chi_B = \chi_C + c\chi_\emptyset,\end{equation}
where $C = \{ gh~\vline~ g \in A, h \in B\}$ and 
$$c = \frac{1}{2}\sum_{\begin{array}{l} g \neq g' \in A\\
h \neq h' \in B\end{array}} \delta_{gh,g'h'}.$$

For example, for $G = C_4 = \{0,1,2,3\}$ the cyclic group of order 4, we have
$$\chi_{\{0,1\}}\chi_{\{1,2\}} = \chi_{\{1,2,3\}} + \chi_{\emptyset}.$$

\begin{proposition}\proplabel{integralextension}
Let $G$ be a finite abelian group. The ring morphism $\iota: R(G) \to R(\wt{G}),~ \chi_g \mapsto \chi_{\{g\}}$ defines an integral ring extension.
\end{proposition}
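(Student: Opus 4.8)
The plan is to exhibit $R(\wt{G})$ as a \emph{module-finite} extension of $\iota(R(G))$ and then invoke the standard Cayley--Hamilton argument to get integrality; all the real content has already been packaged into the earlier statements, so the proof is essentially formal.

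First I would check that $\iota$ is a well-defined, unital, injective ring homomorphism, so that it deserves to be called a ring extension. Recall that for abelian $G$ one has $R(G) \cong \mathbb{Z}[\hat{G}]$, free on the irreducible characters $\{\chi_g \mid g \in G\}$, with $\chi_g\chi_h = \chi_{gh}$ and unit $\chi_e$. Applying the multiplication rule \eqref{multiplication} to two singletons $A = \{g\}$, $B = \{h\}$, the collision coefficient $c$ is an empty sum, hence $\chi_{\{g\}}\chi_{\{h\}} = \chi_{\{gh\}}$; moreover $\iota(\chi_e) = \chi_{\{e\}} = c_1$ is the unit of $R(\wt{G})$. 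Thus $\iota$ respects products and the unit, and its injectivity is immediate from \theref{charindep}, since the singleton characters $\chi_{\{g\}}$ are part of the $\mathbb{C}$-basis $\{\chi_A \mid A \in \mc{P}\}$. The image is therefore a subring isomorphic to $R(G)$, namely the $\mathbb{Z}$-span of the singleton characters.

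Next I would establish module-finiteness over $\iota(R(G))$. By \theref{charindep}, combined with the identification of the irreducible gliders of essential length $\le 1$ and zero body with the subsets $A \in \mc{P} = \mc{P}(G)$ (via \lemref{irlem} and \cite[Theorem 3.13]{CVo2}), the ring $R(\wt{G})$ is a \emph{free} $\mathbb{Z}$-module of finite rank $|\mc{P}(G)| = 2^{|G|}$ with basis $\{\chi_A\}$. Since $\iota(R(G))$ contains $c_1 = \chi_{\{e\}}$ and hence every integer multiple $c_n = n\,c_1$, we have $\mathbb{Z}\cdot c_1 \subseteq \iota(R(G))$; consequently the same finite set $\{\chi_A \mid A \in \mc{P}\}$ that generates $R(\wt{G})$ as a $\mathbb{Z}$-module also generates it as an $\iota(R(G))$-module. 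Hence $R(\wt{G})$ is finitely generated as a module over $\iota(R(G))$. Because the multiplication on $R(\wt{G})$ comes from the tensor product of gliders, which is commutative up to isomorphism, $R(\wt{G})$ is a commutative ring, so the usual determinant trick applies: any $x \in R(\wt{G})$ satisfies the monic characteristic polynomial of the $\iota(R(G))$-linear endomorphism ``multiplication by $x$'' of this finite module, and the coefficients of that polynomial lie in $\iota(R(G))$. This yields the desired integral dependence for every element, completing the proof.

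There is no genuine analytic obstacle here; the only two points needing care are both borrowed from earlier results. The first is the verification that $\iota$ preserves products, which is exactly the vanishing of the collision term $c$ in \eqref{multiplication} for singletons. The second is the finiteness of the $\mathbb{Z}$-rank of $R(\wt{G})$, supplied by \theref{charindep}; it is this finiteness that lets the extension be module-finite and thereby integral. I would deliberately avoid the more computational alternative of producing an explicit monic relation for each $\chi_A$ over $\iota(R(G))$ (for instance $\chi_\emptyset^2 = \chi_\emptyset$ already witnesses that $\chi_\emptyset$ is integral), since the module-finiteness argument dispatches all basis elements uniformly.
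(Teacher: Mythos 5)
Your proof is correct, but it follows a different route from the paper's. The paper argues element by element: it first notes that $(\chi_\emptyset)=\mathbb{Z}\chi_\emptyset$ with $\chi_\emptyset$ idempotent, so it suffices to show each $\chi_A$ is integral over $R(G)[\chi_\emptyset]$, and then writes $\chi_A^n=\chi_{A_n}+c_n\chi_\emptyset$ and exploits the combinatorial fact that for $1\in A$ the chain $A=A_1\subset A_2\subset\cdots$ of powers of the subset $A$ stabilizes (reducing to this case via $A^{o(g)}\ni 1$ when $1\notin A$); this makes $R(G)[\chi_\emptyset][\chi_A]$ module-finite over $R(G)[\chi_\emptyset]$ one generator at a time. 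You instead observe that the \emph{whole} ring $R(\wt{G})=\mathbb{Z}\{\chi_A\mid A\in\mc{P}(G)\}$ is already a finitely generated $\mathbb{Z}$-module of rank $2^{|G|}$, hence a fortiori a finitely generated module over the subring $\iota(R(G))\supseteq\mathbb{Z}\cdot\chi_{\{e\}}$, and then invoke the standard determinant trick for commutative module-finite extensions. Both arguments are valid; yours is shorter and dispatches all of $R(\wt{G})$ uniformly without the subset-power combinatorics, while the paper's is more explicit about \emph{which} monic relation each $\chi_A$ satisfies (its degree is controlled by the stabilization index of $A^n$ and the order of elements of $A$), information that is reused in the semigroup-algebra analysis of \secref{primitiveidempotents}. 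Your preliminary verifications (vanishing of the collision term $c$ in \eqref{multiplication} for singletons, $\chi_{\{e\}}$ being the unit, injectivity from \theref{charindep}, commutativity from $AB=BA$ and the symmetry of $c$) are all accurate; note only that injectivity is needed to speak of an extension but plays no role in the integrality itself, for which spanning, not linear independence, of $\{\chi_A\}$ is what matters.
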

\begin{proof}
It is clear the $\iota$ defines a ring extension. From \eqref{multiplication} we see that the ideal generated by $\chi_\emptyset$ is just $\mathbb{Z}\chi_\emptyset$ and since $\chi_\emptyset$ is an idempotent, it suffices to show that any $\chi_A$ for $A \in \mc{P}$ is integral over $R(G)[\chi_\emptyset]$. 
Hence, take $A \in \mc{P}$ and write $\chi_A^n = \chi_{A_n} + c_n \chi_\emptyset$ for $n \geq 1$. If $1 \in A$, then 
$$A = A_1 \subset A_2 \subset A_3 \subset \ldots$$
Since $G$ is finite, this chain stabilizes at some $A_n$. Hence it follows that 
$$\chi_A^{n+1} \in R(G)[\chi_\emptyset] + R(G)[\chi_\emptyset]\chi_A + \cdots + R(G)[\chi_\emptyset]\chi_A^n,$$
from which it follows that $R(G)[\chi_\emptyset][\chi_A]$ is a finitely generated $R(G)[\chi_\emptyset]$-module. If $1 \notin A$, take some $g \neq 1 \in A$. It follows that $A_{o(g)}$ contains 1, where $o(g)$ denotes the order of $g$. We know that $\chi_{A}^{o(g)}$ is integral, whence so is $\chi_A$.
\end{proof}

In the following section we continue the study of the ring properties of $R(\wt{G})$ for $G$ finite abelian with chain $1 < G$. To end this section, we consider a slightly more elaborate chain, namely a chain $1 < H < G$ of length 2. For these chains, the irreducible gliders of essential length 2 and zero body look like
$$ \bigoplus_{g \in A} V_g \supset \mathbb{C}Hv \supset \mathbb{C}v$$
for some $A \subset \mc{P}(G)$. In general, if a glider $M \supset M_1 \supset \ldots \supset M_i \supset \ldots$ is irreducible, then so is the glider $M_m \supset M_{m+1} \supset \ldots$ for any $m \geq 0$. In casu, for the glider $\mathbb{C}H \supset \mathbb{C}v$, the group $G$ does not play any role, hence it is also irreducible as $F\mathbb{C}H$-glider and $\mathbb{C}Hv = \oplus_{h \in B} Z_h$ for some subset $B \subset H$. By general glider theory, it is clear that $A$ determines $B$. Concretely, choose an isomorphism $\varphi: G \to \hat{G}$ between $G$ and its character group, and do the same for $H$, i.e. $\psi: H \mapright{\cong} \hat{H}$. The composition morphism
$$ G \mapright{\phi} \hat{G} \mapright{res} \hat{H} \mapright{\psi^{-1}} H$$
is surjective and we denote it by $\pi: G \twoheadrightarrow H$. Let us consider the example $1 \subset C_n \subset C_{nm},$
where $C_{nm} = <a>$ and $C_n = <a^m>$. For a cyclic group $C_N = \langle b \rangle$ we have an isomorphism
$$\phi: C_N \to \hat{C_N},~ b^i \mapsto [ b \mapsto \omega^i],$$
where $\omega$ is an $N$-th primitive root of unity. By composing with the isomorphism $\hat{C_N} \to \mathbb{Z}_N,~ \phi(b^i) \mapsto \ov{i}$ we obtain that 
$$\pi: C_{nm} \twoheadrightarrow{} C_n,~ a^i \mapsto a^{\ov{i}},$$
corresponds to the canonical projection map
$$\pi: \mathbb{Z}_{nm} \twoheadrightarrow{} \mathbb{Z}_n.$$ 
The subsets of $C_N$ correspond to subsets of $\{0,1,\ldots, N-1\}$ so we will work over subsets over the integers, rather than over subsets of the cyclic group. For $A \in \mc{P}(\{0,1,\ldots, nm-1\})$ we denote $\pi(A)$ by $\ov{A} = \{ \pi(\ov{i})~\vline~ i \in A\}$. Hence
$$\mathbb{C}C_nv = \oplus_{i \in \ov{A}} Z_i,$$
where the $Z_i, 0 \leq i < n$ denote the irreducible $C_n$-representations. We introduce the following set
$$C_{n,nm} = \{ (|A|,|\pi(A)|) \in \mathbb{N}^2~\vline~ A \in \mc{P}(\{0,1,\ldots, nm-1\})\}.$$
\propref{ortho} extends to
\begin{proposition}
An $F\mathbb{C}C_{nm}$-glider representation $\Omega \supset M  \supset M_1 \supset M_2 \supset 0 \supset \ldots$ of essential length 2 and zero body is irreducible if and only if 
$$ \langle \chi_M, \chi_M \rangle = \left( \begin{array}{ccc}
k^2 & l^2 & 1 \\
0 & l & l\\
0 & 0& k \end{array} \right)$$
for some $(k,l) \in C_{n,nm}$.
\end{proposition}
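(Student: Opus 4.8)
The plan is to mimic the proof of \propref{ortho} one filtration step higher, exploiting that for an abelian group algebra every cyclic module is multiplicity-free (since $\mathbb{C}C_N \cong \mathbb{C}^N$ is a product of fields). For the forward implication, assume $\Omega \supset M \supset M_1 \supset M_2 \supset 0$ is irreducible of essential length $2$ with zero body. By the structure of irreducible gliders (\cite{CVo1}, \cite{CVo2}) it has the form $M = \bigoplus_{g \in A} V_g \supset \mathbb{C}C_n v = \bigoplus_{i \in \ov{A}} Z_i \supset \mathbb{C}v$, so that $M = \mathbb{C}C_{nm}v$ and $M_1 = \mathbb{C}C_n v$ are both multiplicity-free, of dimensions $k = |A|$ and $l = |\ov{A}| = |\pi(A)|$ respectively, while $\dim_\mathbb{C} M_2 = 1$. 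Reading off the modules $G_iM_j$ from the matrix $A(M)$ --- namely $M, M_1, M_2$ in the top row, $C_nM_1 = M_1$ and $C_nM_2 = M_1$ in the middle row, and $C_{nm}M_2 = M$ in the corner --- and substituting into the definition of $\langle -,- \rangle$, the computation is immediate: the three top-row entries are inproducts over the trivial group $G_0$, hence equal the squared dimensions $k^2, l^2, 1$; the two middle-row entries equal $l$ by orthonormality of the distinct $Z_i$; and the corner entry equals $k$ by orthonormality of the distinct $V_g$. Since $(k,l) = (|A|,|\pi(A)|) \in C_{n,nm}$ by definition, this yields the asserted matrix.

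For the converse, suppose $\langle \chi_M, \chi_M \rangle$ has the displayed form. The $1$ in the upper-right corner gives $\dim_\mathbb{C}(M_2)^2 = 1$, so $M_2 = \mathbb{C}v$ is one-dimensional. The glider axiom $\mathbb{C}G_iM_j \subseteq M_{j-i}$ yields the two inclusions $\mathbb{C}C_{nm}v = G_2M_2 \subseteq M$ and $\mathbb{C}C_n v = G_1M_2 \subseteq M_1$. Now $\mathbb{C}C_{nm}v$ is a cyclic $\mathbb{C}C_{nm}$-module, hence multiplicity-free, so the corner entry $\langle \chi_{C_{nm}v}, \chi_{C_{nm}v}\rangle^{C_{nm}} = k$ equals its dimension; comparing with the $(0,0)$ entry $\dim_\mathbb{C}(M)^2 = k^2$, which forces $\dim_\mathbb{C} M = k = \dim_\mathbb{C}(\mathbb{C}C_{nm}v)$, the inclusion $\mathbb{C}C_{nm}v \subseteq M$ becomes an equality. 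The same argument one step down --- $\mathbb{C}C_n v$ cyclic over $\mathbb{C}C_n$ hence multiplicity-free, the $(1,2)$ entry equal to its dimension $l$, and the $(0,1)$ entry forcing $\dim_\mathbb{C} M_1 = l$ --- gives $M_1 = \mathbb{C}C_n v$. Thus $M = \mathbb{C}C_{nm}v \supset M_1 = \mathbb{C}C_n v \supset M_2 = \mathbb{C}v$ is exactly the generated-by-$v$ shape of the structure theorem, so $M$ is irreducible, and $(k,l) = (\dim_\mathbb{C} M, \dim_\mathbb{C} M_1) = (|A|,|\pi(A)|) \in C_{n,nm}$.

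The routine forward computation aside, the point that needs care is the converse: one must extract from the six scalar entries both the one-dimensionality of $M_2$ and the two dimension equalities that pin down $M_1$ and $M$ as the cyclic modules $\mathbb{C}C_n v$ and $\mathbb{C}C_{nm}v$. The mechanism making this work is that each diagonal and super-diagonal inproduct of a multiplicity-free module returns its dimension, so the diagonal entries control dimensions while the off-diagonal entries $(1,2)$ and the corner $(2,2)$ certify multiplicity-freeness via the cyclicity of $\mathbb{C}C_n v$ and $\mathbb{C}C_{nm}v$. I expect the main obstacle to be precisely ensuring that the inclusions coming from the glider axioms are forced to be equalities, which is where the comparison between the squared dimensions in row $0$ and the plain dimensions along the diagonal is essential.
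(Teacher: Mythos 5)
The paper gives no explicit proof of this proposition, merely asserting that \propref{ortho} ``extends to'' it, and your argument is precisely that extension carried out one filtration step higher: the same forward computation from the structure theorem, and the same converse mechanism of comparing squared dimensions in the top row against the dimensions returned by the self-inproducts of the cyclic (hence multiplicity-free) modules $\mathbb{C}C_nv$ and $\mathbb{C}C_{nm}v$ to force the glider-axiom inclusions to be equalities. Your proof is correct and takes essentially the same approach as the paper's.
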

As another example, consider the chain $1 \subset C_n \subset C_n \times C_m$. In this case we have a 1-to-1 correspondence between the irreducible gliders and $\mc{P}(\{0,1\ldots, n-1\}) \times \mc{P}(\{0,1,\ldots,m-1\})$. By picking the same isomorphisms as before, the map $\pi: C_n \times C_m \to C_n$ is also just the canonical projection map. Analogously one then proves the following
\begin{proposition}
An $F\mathbb{C}C_nC_m$-glider representation $\Omega \supset M  \supset M_1 \supset M_2 \supset 0 \supset \ldots$ of essential length 2 and zero body is irreducible if and only if 
$$ \langle \chi_M, \chi_M \rangle = \left( \begin{array}{ccc}
(kl)^2 & l^2 & 1 \\
0 & l & l\\
0 & 0& kl \end{array} \right)$$
for some $k \leq m, l \leq n$.
\end{proposition}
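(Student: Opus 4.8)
The plan is to mirror the proof of \propref{ortho} and of the foregoing Proposition for the chain $1 \subset C_n \subset C_{nm}$, replacing the fibre structure of $\pi\colon C_{nm}\twoheadrightarrow C_n$ by that of the coordinate projection $\pi\colon C_n\times C_m\twoheadrightarrow C_n$. By \cite[Theorem 3.13]{CVo2} and the one-to-one correspondence with $\mc{P}(\{0,\ldots,n-1\})\times\mc{P}(\{0,\ldots,m-1\})$ recalled above, an irreducible glider of essential length $2$ and zero body is of the form $\bigoplus_{g\in A}V_g\supset \mathbb{C}C_nv\supset\mathbb{C}v$ with $A=A_1\times A_2$; I set $l=|A_1|\leq n$ and $k=|A_2|\leq m$, so that $\dim_\mathbb{C}M_2=1$, $\dim_\mathbb{C}M_1=|\pi(A)|=|A_1|=l$ and $\dim_\mathbb{C}M=|A|=|A_1||A_2|=kl$.

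For the direction $(\Rightarrow)$ I would compute the matrix $\langle\chi_M,\chi_M\rangle$ entry by entry, exactly as in \propref{ortho}. The three off-diagonal entries in the top row are the squares $(\dim_\mathbb{C}M_j)^2$, giving $(kl)^2,\ l^2,\ 1$. Since $G=C_n\times C_m$ is abelian all $V_g$ and all $C_n$-constituents are one-dimensional and pairwise non-isomorphic, so each diagonal entry is an inner product of a multiplicity-free character with itself and therefore equals its number of constituents: the entry $\langle\chi_{22},\chi_{22}\rangle^{G}$ counts the $kl$ summands $V_g$, while $\langle\chi_{11},\chi_{11}\rangle^{C_n}$ and $\langle\chi_{12},\chi_{12}\rangle^{C_n}$ both count the $l$ summands of the multiplicity-free $C_n$-module $\mathbb{C}C_nv$. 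This produces the asserted matrix.

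For $(\Leftarrow)$ I would read the data back out of the matrix. The $1$ in the upper right corner forces $\dim_\mathbb{C}M_2=1$; the entries $l^2$ and $l$ together with the fact that $\mathbb{C}C_nM_2=\mathbb{C}C_nv$ is cyclic, hence multiplicity-free, force $\dim_\mathbb{C}\mathbb{C}C_nv=l=\dim_\mathbb{C}M_1$ and hence $M_1=\mathbb{C}C_nv$; likewise the entries $(kl)^2$ and $kl$ force $\mathbb{C}Gv=M$ to be multiplicity-free of dimension $kl$. Matching these invariants against the classification of \cite[Theorem 3.13]{CVo2} (equivalently, invoking \cite[Theorem 4.7]{EVO} to exclude a nontrivial fragment direct sum, whose inner product matrix would split off extra constituents) then identifies $\Omega\supset M$ with an irreducible glider of the listed type.

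The main obstacle, and the one genuinely new point compared with the cyclic case $1\subset C_n\subset C_{nm}$, is the passage through the integrality relation $\dim_\mathbb{C}M=k\cdot\dim_\mathbb{C}M_1$ encoded by the two entries $kl$ and $l$. For $C_{nm}$ the admissible pairs $(k,l)$ are constrained to the set $C_{n,nm}$, whereas here the splitting $\mathbb{C}G=\mathbb{C}C_n\ot\mathbb{C}C_m$ lets one write $M=\mathbb{C}C_m\cdot M_1$ and realise the $C_m$-action uniformly on each of the $l$ constituents of $M_1$; this is exactly the product shape $A=A_1\times A_2$, and it is what makes $k$ a genuine integer ranging freely over $k\leq m$ while $l$ ranges over $l\leq n$. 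Verifying that a non-uniform $C_m$-action (non-constant fibre size over $\pi$) always yields a proper fragment summand — so that such gliders are reducible and never contribute a matrix of the stated form — is the step that requires the most care.
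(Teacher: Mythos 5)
Your forward computation and the mechanical half of the converse are exactly what the paper intends by ``analogously'': the top row of $\langle\chi_M,\chi_M\rangle$ consists of the squared dimensions $(\dim_{\mathbb{C}}M_j)^2$, and the diagonal entries count the pairwise non-isomorphic one-dimensional constituents of the cyclic modules $\mathbb{C}C_nM_1$, $\mathbb{C}C_nM_2$ and $\mathbb{C}GM_2$, just as in \propref{ortho}. The difficulty is the step you flag in your last paragraph and leave unverified, namely that a subset $A\subseteq\mathbb{Z}_n\times\mathbb{Z}_m$ with non-constant fibres over $\pi$ yields a \emph{reducible} glider. That step is not merely delicate; it fails. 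By the description of irreducible gliders for a general chain $1<H<G$ used at the end of the section and again at the start of \secref{complicatedchain} (and equally by applying \propref{anti-diagonal} to $\chi(1)$), the glider $\bigoplus_{g\in A}V_g\supset\mathbb{C}C_nv\supset\mathbb{C}v$ is irreducible for \emph{every} nonempty $A\in\mc{P}(\mathbb{Z}_n\times\mathbb{Z}_m)$, not only for product sets $A_1\times A_2$: its matrix $\chi(1)$ is
$$\left(\begin{array}{ccc} |A| & |\pi(A)| & 1\\ & |\pi(A)| & |\pi(A)|\\ & & |A|\end{array}\right),$$
which is anti-diagonally symmetric with a $1$ in the corner whatever the fibre sizes are. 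For instance $A=\{(0,0),(1,0),(1,1)\}$ (any $n,m\geq 2$) gives an irreducible glider of essential length $2$ and zero body with
$$\langle\chi_M,\chi_M\rangle=\left(\begin{array}{ccc} 9 & 4 & 1\\ 0 & 2 & 2\\ 0 & 0 & 3\end{array}\right),$$
and $(3,2)$ is not of the form $(kl,l)$ with $k$ an integer. So the ``only if'' direction cannot be closed along the route you propose.

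The gap is therefore not only in your argument but in the parametrization you (following the displayed bijection with $\mc{P}(\{0,\ldots,n-1\})\times\mc{P}(\{0,\ldots,m-1\})$) are trying to justify. The honest analogue of the preceding proposition for $C_{nm}$ would let $(k,l)$ range over the achievable pairs $(|A|,|\pi(A)|)$ with $A\in\mc{P}(\mathbb{Z}_n\times\mathbb{Z}_m)$ nonempty, and that set is $\{(k,l)\mid 1\leq l\leq n,\ l\leq k\leq lm\}$, strictly larger than $\{(kl,l)\mid k\leq m,\ l\leq n\}$ once $n,m\geq 2$. If you want to keep the stated form of the matrix, you would have to prove that only product subsets occur among the irreducibles, which contradicts the indexing of essential-length-$2$ irreducibles by all of $\mc{P}(G)$ used in \secref{complicatedchain}. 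Either the statement should be reformulated over the achievable pairs, or the claimed bijection needs an argument that your proposal (and the paper) does not supply; as written, the final step of your proof is where it breaks.
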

\begin{remark}
For $(n,m) = 1$, the group $C_{nm}$ sits in both cases. Let $(k,l) \in C_{n,nm}$, then $l = \pi_{C_n}(k)$. Let $l' = \pi_{C_m}(k)$. Because $n$ and $m$ are coprime, if follows that $k = ll'$ and we see that the results form both propositions are the same.
\end{remark}

\section{primitive central idempotents}\seclabel{primitiveidempotents}

In this section, $G$ is a finite abelian group with chain $1 < G$. From the previous section we know that we can choose an isomorphism between a finite abelian group $G$ and its character group $\hat{G}$. This isomorphism extends automatically to an isomorphism of the group ring $\mathbb{Z}G$ and the representation ring $R(G)$. By tensoring with the rational numbers, we then obtain an isomorphism $\mathbb{Q}G \cong \mathbb{Q} \ot_\mathbb{Z} R(G)$ of $\mathbb{Q}$-algebras. We also provided the ring structure on $R(\wt{G})$ and we observe that $R(\wt{G})$ is the semigroup algebra $\mathbb{Z}S$ of the semigroup $S$ given by
$$S = \{ A \in \mc{P}(G)\}$$ with multiplication
as defined in \eqref{multiplication}. For generalities on semigroups, we refer to \cite{Ok}.\\

In the previous section, we derived an integral ring extension $R(G) \hookmapright{} R(\wt{G})$. This allows us to consider the $\mathbb{Q}$-algebra extension $\mathbb{Q}G \hookmapright{} \mathbb{Q} \ot_{\mathbb{Z}} R(\wt{G})$ ($\mathbb{Q}$ is flat as $\mathbb{Z}$-module). We denote $\mathbb{Q}(\wt{G}) :=  \mathbb{Q} \ot_{\mathbb{Z}} R(\wt{G})$. Since $G$ is finite, Maschke's theorem entails that the rational group algebra $\mathbb{Q}(G)$ is semisimple, hence our first task is to determine the Jacobson radical of $\mathbb{Q}(\wt{G})$. Secondly, we will determine the primitive central idempotents. To this extent, we mention the results obtained in \cite{JLP}. In loc. cit. the authors describe the primitive central idempotents of the group algebra $\mathbb{Q}G$ for $G$ a finite nilpotent group. Let us introduce some notation. Let $H \leq G$ be a subgroup. Then $\hat{H} = \frac{1}{|H|} \sum_{h\in H} h$ is an idempotent, which is central if and only if $H$ is normal. Denote by $\mc{M}(G)$ the set of all minimal normal subgroups and define
$$\epsilon(G) = \Pi_{L \in \mc{M}(G)} (1 - \hat{L}).$$
For a normal subgroup $N \triangleleft G$, we consider $\mc{M}(G/N)$ and write $L$ for the associated subgroup to $\ov{L}$ of $G/N$ containing $N$. It appears that 
$$\epsilon(G,N) = \hat{N} \Pi_{\ov{L} \in \mc{M}(G/N)}(1- \hat{L})$$ 
is a primitive central idempotent if and only if $G/N$ is cyclic and these are all the primitive central idempotents, see \cite[Corollary 2.1]{JLP}.\\

In proving their results in \cite{JLP}, the authors used the isomorphism $\mathbb{Z}(G/H) \cong (\mathbb{Z}G)\hat{H}$, for $H$ a normal subgroup in $G$. In our setting, we can extend the canonical homomorphism $\omega: G \to G/H$ to a $\mathbb{Z}$-linear morphism $\omega_H: R(\wt{G}) \to R(\wt{G/H})$, defined by
$$\omega_{H}(\sum_{A \in \mc{P}(G)} a_A \chi_A) = \sum_{A \in \mc{P}(G)} a_A \chi_{\omega(A)}.$$
This morphism is however not a ring morphism in general.
\begin{example}
Consider the groups $\mathbb{Z}_2 \subset \mathbb{Z}_4$ and write $\mathbb{Z}_4/\mathbb{Z}_2 = \{e,a\}$. Then in $R(\wt{\mathbb{Z}_4})$ we have $\chi_{\{1,3\}}\chi_{\{0,2\}} = \chi_{\{1,3\}} + 2\chi_{\emptyset}$ but in $R(\wt{\mathbb{Z}_4/\mathbb{Z}_2})$ it holds
$$\chi_{\{a\}}\chi_{\{1\}} = \chi_{\{a\}}.$$
\end{example}
By modding out the ideal $(\chi_\emptyset) = \mathbb{Z}\chi_\emptyset$ we get the induced map
$$\ov{\omega}_H: R(\wt{G})/(\chi_\emptyset) \to R(\wt{G/H})/(\chi_{\emptyset}).$$
\begin{lemma}
Let $H \subset G$ be a subgroup. The map $\ov{\omega}_H$ is a ring morphism.
\end{lemma}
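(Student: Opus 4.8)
The plan is to reduce the statement to a single elementary identity about image subsets under $\omega$, after first disposing of well-definedness and additivity, which come for free. The map $\omega_H$ is $\mathbb{Z}$-linear by construction and sends $\chi_\emptyset$ to $\chi_{\omega(\emptyset)} = \chi_\emptyset$, so it carries the ideal $(\chi_\emptyset) = \mathbb{Z}\chi_\emptyset$ of $R(\wt{G})$ into the ideal $(\chi_\emptyset)$ of $R(\wt{G/H})$; this is exactly what makes the induced map $\ov{\omega}_H$ exist and be additive. Since $\{\chi_A \mid A \in \mc{P}(G)\}$ is a $\mathbb{Z}$-basis of $R(\wt{G})$ (the $\mathbb{C}$-independence is \theref{charindep}), it then suffices to verify multiplicativity on the classes $\ov{\chi_A}$.

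The crux is the observation that passing to the quotient by $(\chi_\emptyset)$ annihilates the correction term in the product rule \eqref{multiplication}. Writing $AB := \{\,gh \mid g \in A,\ h \in B\,\}$, the rule $\chi_A\chi_B = \chi_{AB} + c\,\chi_\emptyset$ becomes, modulo $(\chi_\emptyset)$,
$$\ov{\chi_A}\,\ov{\chi_B} = \ov{\chi_{AB}},$$
so the multiplication on $R(\wt{G})/(\chi_\emptyset)$ is simply the one induced by the product of subsets of $G$, and likewise for $G/H$. This is precisely the phenomenon shown in the preceding example: before quotienting, the stray term $2\chi_\emptyset$ obstructs multiplicativity of $\omega_H$, and it is exactly this term that the quotient kills.

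With this description the verification is immediate from the fact that $\omega \colon G \to G/H$ is a group homomorphism: for subsets $A, B \subset G$ one has $\omega(AB) = \{\,\omega(g)\omega(h) \mid g\in A,\ h\in B\,\} = \omega(A)\,\omega(B)$ as subsets of $G/H$, whence
$$\ov{\omega}_H(\ov{\chi_A}\,\ov{\chi_B}) = \ov{\chi_{\omega(AB)}} = \ov{\chi_{\omega(A)\omega(B)}} = \ov{\omega}_H(\ov{\chi_A})\,\ov{\omega}_H(\ov{\chi_B}).$$
Finally $\ov{\omega}_H$ sends the unit $\ov{\chi_{\{e\}}}$ to $\ov{\chi_{\{eH\}}}$, the unit of $R(\wt{G/H})/(\chi_\emptyset)$, so it is a morphism of unital rings. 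I do not expect a genuine obstacle: the only point requiring care is the bookkeeping that $\omega_H$ respects the two ideals so that $\ov{\omega}_H$ is legitimately defined, after which everything rests on the trivial set-theoretic identity $\omega(AB) = \omega(A)\omega(B)$ for a group homomorphism.
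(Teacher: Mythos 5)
Your proof is correct and is exactly the argument the paper leaves implicit (its own proof reads only ``Trivial''): modulo $(\chi_\emptyset)$ the multiplication on both sides is the one induced by the product of subsets, and multiplicativity reduces to $\omega(AB)=\omega(A)\omega(B)$ for the group homomorphism $\omega\colon G \to G/H$. Nothing is missing; your version simply spells out the bookkeeping the authors omit.
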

\begin{proof}
Trivial.
\end{proof}

Hence, from now on we will always work modulo the ideal $(\chi_\emptyset)$ but we do not alter our notation, i.e. $R(\wt{G})$ means $R(\wt{G})/(\chi_\emptyset)$. In fact, from a semigroup point of view, the element $\chi_\emptyset$ is a 0-element. To get some feeling with these semigroup algebras, we include the following example

\begin{example}\exalabel{Cpfirst}
For $G = C_p = \langle a \rangle$, the cyclic group of order $p$, we have that
$$1 = \epsilon(G) + \hat{G} \in \mathbb{Q}(C_p).$$
In $\mathbb{Q}(\wt{C_p})$ however, one verifies that 
$$\psi(C_p) := \sum_{i=0}^{p-1} \frac{1}{p} \chi_{\{a^i\}} - \chi_{C_p}$$
is idempotent and
$$1 = \chi_{\{1\}} = \epsilon(C_p) + \psi(C_p) + \chi_{C_p},$$
is a decomposition into orthogonal idempotents. For now, it is not clear whether $\psi(C_p)$ is primitive or not.
\end{example}

Let us compute the kernel $\Ker(\omega_H)$. Choose a transversal set $T$ for $H$ in $G$. Every element $g$ in $G$ can be written uniquely as the product $th$ for some $t \in T$, $h \in H$, hence we can associate to any $A \subset G$, the subset $T_A \subset T$ consisting of all the $t$'s appearing in these products. Define
$$\mc{P}(G,B) = \{ A \in \mc{P}(G)~\vline~ T_A = B\}, \quad B \in \mc{P}(T).$$
Clearly, it holds that $\mc{P}(G) = \sqcup_{B \in \mc{P}} \mc{P}(G,B)$ and in particular the element $\chi_{BH} \in \mc{P}(G,B)$. Let $\alpha = \sum_{A \in \mc{P}(G)} a_A\chi_A \in R(\wt{G})$. Define
$$\alpha' = \sum_{B \in \mc{P}(T)} \big( \sum_{A \in \mc{P}(G,B)} a_A\big)\chi_{BH}.$$
We have that $\omega_H(\alpha) = \omega_H(\alpha')$. Suppose that $\omega_H(\alpha) = 0$. By \theref{charindep} it follows that for any $B \in \mc{P}(T)$ it holds
$$a_{BH} = - \sum_{A \in \mc{P}(G,B) \setminus \{BH\}} a_{A}.$$
It follows that
\begin{eqnarray*}
\alpha &=& \sum_{B \in \mc{P}(T)} \Bigg[ \sum_{A \in \mc{P}(G,B) \setminus \{BH\}} -a_A\chi_{BH} + a_A\chi_A\Bigg]\\
&=& \sum_{B \in \mc{P}(T)} \Bigg[ \sum_{A \in \mc{P}(G,B) \setminus \{BH\}} -a_A\chi_A\chi_H + a_A\chi_A\Bigg]\\
&=& \sum_{B \in \mc{P}(T)} \Bigg[ \sum_{A \in \mc{P}(G,B) \setminus \{BH\}} a_A\chi_A(1-\chi_H)\Bigg].
\end{eqnarray*}
Hence we have just proven the following 
\begin{proposition}\proplabel{ke}
The kernel of $\omega_H$ equals $\Ker(\omega_H) = (\chi_H - 1)$.
\end{proposition}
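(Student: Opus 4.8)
The plan is to prove the two inclusions $(\chi_H - 1) \subseteq \Ker(\omega_H)$ and $\Ker(\omega_H) \subseteq (\chi_H - 1)$ separately, working throughout modulo $(\chi_\emptyset)$ so that, by the preceding lemma, $\omega_H$ is the \emph{ring} morphism $\ov{\omega}_H$ and $R(\wt{G})$ is the Minkowski-product semigroup algebra of $\mc{P}(G)$ (with $\chi_\emptyset$ as zero and $1 = \chi_{\{1\}}$ as unit).

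For the inclusion $(\chi_H - 1) \subseteq \Ker(\omega_H)$, since $\omega_H$ is now a ring morphism it suffices to check that its single generator lies in the kernel. Both the subgroup $H$ and the singleton $\{1\}$ are carried by $\omega$ to the trivial coset $\{e\} \in \mc{P}(G/H)$, so that $\omega_H(\chi_H) = \chi_{\{e\}} = \omega_H(1)$, whence $\omega_H(\chi_H - 1) = 0$ and the whole ideal maps to zero.

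The reverse inclusion is exactly the content of the computation preceding the statement, which I would organise as follows. Fix a transversal $T$, partition $\mc{P}(G) = \sqcup_{B} \mc{P}(G,B)$ according to the pattern $T_A = B$, and take $\alpha = \sum_A a_A \chi_A \in \Ker(\omega_H)$. Since $\chi_{\omega(A)}$ in $R(\wt{G/H})$ depends only on $B = T_A$, the image $\omega_H(\alpha)$ is a combination of the \emph{distinct} characters $\chi_{\omega(BH)}$ with coefficients $\sum_{A \in \mc{P}(G,B)} a_A$; by the linear independence of glider characters in the target ($\theref{charindep}$ applied to $G/H$) these coefficients all vanish, giving $a_{BH} = -\sum_{A \neq BH} a_A$ on each block. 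The decisive observation is that $\chi_A \chi_H = \chi_{AH} = \chi_{BH}$ for every $A$ with $T_A = B$, because multiplying by the subgroup $H$ saturates $A$ to the union of cosets $BH$. Using this to eliminate the coefficients of the saturated subsets $BH$ rewrites each block as $\sum_{A \neq BH} a_A(\chi_A - \chi_{BH}) = \sum_{A \neq BH} a_A \chi_A (1 - \chi_H)$, exhibiting $\alpha$ as an element of the ideal $(1-\chi_H) = (\chi_H - 1)$.

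The one genuinely non-formal step — the main obstacle — is the identity $\chi_A \chi_H = \chi_{BH}$: it relies on the Minkowski-product (semigroup) description of $R(\wt{G})$, which is only available after passing to the quotient by $(\chi_\emptyset)$, and it is precisely what converts the diagonal subtraction forced by the kernel conditions into manifest membership in $(\chi_H - 1)$. The remaining care is bookkeeping: verifying that $\omega_H$ factors through the pattern $T_A$ so that $\theref{charindep}$ can be invoked in $R(\wt{G/H})$. Once that is in place the argument closes immediately.
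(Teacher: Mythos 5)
Your argument is correct and follows essentially the same route as the paper: the forward inclusion via $\omega_H(\chi_H)=\chi_{\{\ov{1}\}}=\omega_H(1)$ together with $\ov{\omega}_H$ being a ring morphism modulo $(\chi_\emptyset)$, and the reverse inclusion via the transversal partition $\mc{P}(G)=\sqcup_B\mc{P}(G,B)$, linear independence in $R(\wt{G/H})$, and the identity $\chi_A\chi_H=\chi_{BH}$ for $T_A=B$. The paper leaves the easy inclusion implicit, but your write-up matches its proof of the substantive direction step for step.
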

As a corollary we obtain that $R(\wt{G/H}) \cong R(\wt{G})\chi_H$, hence $\mathbb{Q}(\wt{G/H}) \cong \mathbb{Q}(\wt{G})\chi_H$.
\begin{proposition}
The element $\chi_G \in \mathbb{Q}(\wt{G})$ is a primitive central idempotent.
\end{proposition}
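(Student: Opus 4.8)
The plan is to verify the three defining properties of a primitive central idempotent in turn — idempotency, centrality, and primitivity — working throughout modulo the ideal $(\chi_\emptyset)$, so that the semigroup multiplication \eqref{multiplication} reduces to $\chi_A\chi_B = \chi_{AB}$ with $AB = \{gh \mid g \in A,\, h \in B\}$.

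First I would check idempotency. Since $G$ is closed under multiplication, the product set $G\cdot G$ equals $G$, so \eqref{multiplication} gives $\chi_G^2 = \chi_G + c\,\chi_\emptyset$ for some nonnegative integer $c$; passing to $R(\wt{G})/(\chi_\emptyset)$ this becomes $\chi_G^2 = \chi_G$. Centrality is immediate: because $G$ is abelian the product of subsets is commutative, so $\mathbb{Q}(\wt{G})$ is a commutative ring and every element, in particular $\chi_G$, is central.

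The heart of the argument is primitivity. The key observation is that for any nonempty $A \in \mc{P}(G)$ one has $G\cdot A = G$ — fix any $h \in A$ and note $Gh = G$ — hence $\chi_G\chi_A = \chi_G$ in $R(\wt{G})/(\chi_\emptyset)$. Since the $\chi_A$ form a basis by \theref{charindep} and $\chi_\emptyset = 0$ in the quotient, this shows that $\chi_G\,\mathbb{Q}(\wt{G}) = \mathbb{Q}\chi_G$, a one-dimensional $\mathbb{Q}$-algebra with unit $\chi_G$, hence isomorphic to the field $\mathbb{Q}$. Because $\mathbb{Q}(\wt{G})$ is commutative, $\chi_G$ is a primitive central idempotent precisely when the corner ring $\chi_G\,\mathbb{Q}(\wt{G})$ has no idempotents besides $0$ and $\chi_G$; as this ring is a field, that holds automatically and the proof concludes.

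The only step requiring any care — and the one I would flag as the main, though modest, obstacle — is being scrupulous about the reduction modulo $(\chi_\emptyset)$: the raw multiplication produces the correction term $c\,\chi_\emptyset$, and one must confirm that it is precisely the passage to $R(\wt{G})/(\chi_\emptyset)$ that turns $\chi_G$ into a genuine idempotent and collapses $\chi_G\,\mathbb{Q}(\wt{G})$ onto a single line. Everything else is formal.
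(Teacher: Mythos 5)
Your proof is correct, and it reaches the same decisive fact as the paper --- that the corner ring $\mathbb{Q}(\wt{G})\chi_G$ is isomorphic to $\mathbb{Q}$, hence a field with no idempotents besides $0$ and its identity --- but by a more elementary route. The paper identifies $\mathbb{Q}(\wt{G})\chi_G$ with $\mathbb{Q}(\wt{G})/\mathrm{Ann}(\chi_G)$ and then invokes \propref{ke}, which gives $\mathrm{Ann}(\chi_G)=\Ker(\omega_G)=(\chi_G-1)$, so that the corner ring is $\mathbb{Q}(\wt{G/G})\cong\mathbb{Q}$. You instead observe directly that $GA=G$ for every nonempty $A\in\mc{P}(G)$, so multiplication by $\chi_G$ sends every basis element $\chi_A$ from \theref{charindep} to $\chi_G$ (and $\chi_\emptyset$ to $0$ in the quotient), collapsing the corner ring onto the line $\mathbb{Q}\chi_G$; linear independence guarantees $\chi_G\neq 0$, so this line is a copy of $\mathbb{Q}$. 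Your computation is self-contained and in effect re-derives the semigroup-theoretic remark the paper makes immediately after the proposition, namely that $\chi_G$ is a $0$-element of the semigroup $S$. What the paper's route buys is uniformity: the isomorphism $R(\wt{G})\chi_H\cong R(\wt{G/H})$ from \propref{ke} is needed for arbitrary subgroups $H$ in the subsequent classification of all primitive central idempotents, so deriving the case $H=G$ from it costs nothing extra, whereas your argument exploits the special absorbing property of the full group. Your care about the reduction modulo $(\chi_\emptyset)$ is well placed, though the paper has already fixed that convention before stating the proposition.
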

\begin{proof}
The kernel of the ring morphism $~\cdot ~\chi_G: \mathbb{Q} \ot_\mathbb{Z} R(\wt{G}) \to   \mathbb{Q} \ot_\mathbb{Z} R(\wt{G})\chi_G$ is the annihilator of $\chi_G$. Hence by \propref{ke} we have that
$$  \mathbb{Q} \ot_\mathbb{Z} R(\wt{G})\chi_G \cong   \mathbb{Q} \ot_\mathbb{Z} R(\wt{G})/\Ker(\omega_G) \cong  \mathbb{Q} \ot_\mathbb{Z} \mathbb{Z} \cong \mathbb{Q}.$$
\end{proof}

Again in terms of semigroups, the element $\theta = \chi_G$ is a 0-element. From now, we consider the contracted semigroup algebra $\mathbb{Q}S_\theta$. It is known that any cyclic semigroup $\langle s \rangle$ in a finite semigroup $S$ contains an idempotent. There is also a natural order on the set $E(S)$ of idempotents given by $e \leq f$ if and only if $e = ef = fe$ for $e,f \in E(S)$.
\begin{lemma}
Let $A \in \mc{P}(G)$, then the cyclic semigroup $\langle \chi_A \rangle$ contains a unique minimal idempotent.
\end{lemma}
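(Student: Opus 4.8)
The plan is to work directly inside the finite commutative semigroup $S=\mc P(G)$ whose product is the setwise multiplication $A\cdot B=\{gh\mid g\in A,\ h\in B\}$ coming from \eqref{multiplication} after discarding the $\chi_\emptyset$-term. Since existence of an idempotent in the cyclic semigroup $\langle\chi_A\rangle=\{A^n\mid n\ge 1\}$ is already granted, the real content is to pin down \emph{which} power is idempotent and to see that there is only one. My first step is to record that the idempotents of $S$ are exactly the subgroups of $G$: a nonempty $E\in\mc P(G)$ with $E\cdot E=E$ is closed under multiplication, and for any $g\in E$ the left translation $x\mapsto gx$ is an injection $E\to E$ of a finite set, hence a bijection $gE=E$; this forces $1\in E$ and $g^{-1}\in E$, so $E$ is a subgroup, while conversely every subgroup is plainly idempotent.

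Next I would analyse the orbit $A,A^2,A^3,\dots$ by normalising $A$ to contain the identity. Fix $a\in A$ and put $B=a^{-1}A$, so that $1\in B$ and hence $B\subseteq B^2\subseteq B^3\subseteq\cdots$; by finiteness of $G$ this ascending chain stabilises at the subgroup $K:=\langle B\rangle=\langle a^{-1}A\rangle$ generated by $B$, and $B^n=K$ for all large $n$. Using commutativity of $G$ one has $A^n=(aB)^n=a^nB^n$, so $A^n=a^nK$ for all large $n$.

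The key observation is then that among the eventually periodic terms $a^nK$ a power $A^n$ is idempotent precisely when $a^{2n}K=a^nK$, that is, when $a^n\in K$, and in that case $A^n=K$ is itself the subgroup $K$. Thus every idempotent occurring in $\langle\chi_A\rangle$ equals $K$, which gives both existence and uniqueness; being the unique idempotent of the cyclic semigroup it is a fortiori its minimal idempotent. I would also note that $K=\langle a^{-1}A\rangle$ is independent of the chosen $a\in A$ (for $a_1,a_2\in A$ one has $a_2^{-1}a_1\in\langle a_1^{-1}A\rangle$, whence the two generated subgroups coincide), so the idempotent is canonically attached to $A$.

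The only mildly delicate point is the coset bookkeeping in the last step: one must keep track of the fact that the stabilised powers run through the cosets $a^nK$ and isolate the single residue of $n$ modulo $o(aK)$ for which the coset is the subgroup $K$ itself. Everything else is the routine stabilisation of an ascending chain of subsets together with the elementary identification of idempotent subsets with subgroups.
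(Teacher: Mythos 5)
Your proof is correct, but it takes a genuinely different route from the paper's. Both arguments open the same way, by identifying the idempotents of the power-set semigroup with the subgroups of $G$. After that the paper proves uniqueness by pure exponent arithmetic inside the cyclic semigroup: assuming $A^n=H$ and $A^m=H'$ are both idempotent with $n\le m$, it writes $m=kn+r$ and manipulates powers to force $H'=H$ --- an argument that works in any finite (automatically commutative) cyclic semigroup and never identifies which subgroup the idempotent is. You instead normalise to $B=a^{-1}A\ni 1$, let the ascending chain $B\subseteq B^2\subseteq\cdots$ stabilise at $K=\langle a^{-1}A\rangle$, and use commutativity of $G$ to see that the large powers are the cosets $A^n=a^nK$, of which only $K$ itself can be a subgroup; this gives uniqueness together with an explicit description of the unique idempotent $n(A)=\langle a^{-1}A\rangle$ and of the eventual orbit. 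That extra explicitness is a real bonus: the containment $A\subseteq aK$ that falls out of your normalisation is exactly what the paper has to prove separately afterwards (that $A\subseteq gH$ for $H=n(A)$ and some $g$), whereas the paper's argument is shorter and more general. One small point worth a sentence in your write-up: you restrict attention to the ``eventually periodic terms $a^nK$,'' so you should note that an idempotent $A^m$ occurring before stabilisation satisfies $A^m=A^{km}$ for all $k$ and hence reappears among the large powers, so it too must equal $K$. With that remark added, the argument is complete.
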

\begin{proof}
It is clear that $\chi_B^2 = \chi_B$ if and only if $B \leq G$ is a subgroup. Suppose that $A^n = H$ and $A^m = H'$ are both idempotents, i.e. subgroups of $G$ and suppose that $n \leq m$. We can write $m = kn + r$ for some $0 \leq r \leq m-1$. It follows that $H' = A^m = HA^{m-n} = HA^{(k-1)n}A^r = HA^r$. Hence $(HA^r)^2 = HA^{2r} = HA^r$. From this it follows that
$$A^m = (HA^r)^2 = (HA^r)^n = H = A^n.$$
It follows that there is only one idempotent in $\langle A \rangle$ which is therefore minimal.
\end{proof}
\begin{definition}
For $A \in \mc{P}(G)$ we define $n(A)$ to be the unique minimal idempotent in $\langle A \rangle$.
\end{definition}
\begin{lemma}\lemlabel{class}
Let $A \in \mc{P}(G)$ and $n(A) = H$. Then either $A \subset H$ or $A \cap H = \emptyset$. 
\end{lemma}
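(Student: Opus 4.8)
The plan is to pass to the quotient $G/H$, where $H$ collapses to the identity, and then read off the dichotomy from the fact that $A$ must land inside a single coset of $H$. To set up, I would first record the starting data: by the preceding discussion the idempotents of the semigroup $S=\mc{P}(G)$ are exactly the subgroups of $G$, so $H=n(A)$ is a subgroup and $H=A^{m}$ for some $m\geq 1$ (the idempotent power occurring in $\langle A\rangle$). Note $A\neq\emptyset$, since otherwise $A^m=\emptyset$ could not be a subgroup. The crucial observation is that the canonical projection $\pi\colon G\to G/H$ is multiplicative on subsets for the product-set multiplication: for subsets $X,Y\subset G$ one has $\pi(XY)=\pi(X)\pi(Y)$ in $\mc{P}(G/H)$, since $\pi$ is a group homomorphism. (This is the set-level shadow of the ring morphism $\ov{\omega}_H$, which is exactly why it is harmless to work modulo $(\chi_\emptyset)$.) Applying this to $A^{m}=H$ gives
$$\pi(A)^{m}=\pi(A^{m})=\pi(H)=\{\ov{e}\},$$
the trivial singleton in $G/H$.

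Next I would prove the elementary fact that a nonempty subset $X\subset G/H$ with $X^{m}=\{\ov{e}\}$ must be a singleton. Indeed, if $\ov{a},\ov{a'}\in X$, then both $\ov{a}^{\,m}$ and $\ov{a}^{\,m-1}\ov{a'}$ lie in $X^{m}=\{\ov{e}\}$, whence $\ov{a}^{\,m}=\ov{a}^{\,m-1}\ov{a'}$ and so $\ov{a}=\ov{a'}$ (the case $m=1$ being immediate). Thus $\pi(A)=\{\ov{c}\}$ for a single element $\ov{c}\in G/H$, which says precisely that $A$ is contained in one coset $cH$, for any representative $c$ with $\pi(c)=\ov{c}$.

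Finally the dichotomy is immediate: if $\ov{c}=\ov{e}$ then $A\subseteq cH=H$, while if $\ov{c}\neq\ov{e}$ then $cH\cap H=\emptyset$ and a fortiori $A\cap H=\emptyset$. I do not expect a genuine obstacle here; the only point requiring a little care is the multiplicativity of $\pi$ on subsets, together with the reminder that we are computing in the contracted semigroup, so that the $\chi_\emptyset$-corrections in \eqref{multiplication} are suppressed. The conceptual content is entirely concentrated in the single-coset conclusion $\pi(A)=\{\ov{c}\}$, after which the statement of the lemma is forced.
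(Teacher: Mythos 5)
Your proof is correct, but it takes a genuinely different route from the paper's. The paper argues directly inside $G$: assuming $b\in A\cap H$ and $A^{n}=H$, it writes any $a\in A$ as $a=(ab^{n-1})b^{1-n}$ with both factors in $H$, concluding $A\subset H$. You instead pass to the quotient $G/H$ (legitimate here, since $G$ is abelian throughout this section, so $H$ is normal), use multiplicativity of $\pi$ on product sets to get $\pi(A)^{m}=\{\ov{e}\}$, and then show by the cancellation $\ov{a}^{\,m}=\ov{a}^{\,m-1}\ov{a}'$ that $\pi(A)$ is a singleton. The underlying cancellation is the same in both arguments, but your framing buys more: the single-coset conclusion $A\subset cH$ is precisely the statement of the lemma that immediately follows this one in the paper (there proved separately, by observing $A\subset a^{1-n}H$), so your argument establishes both lemmas simultaneously and makes the dichotomy a trivial corollary of the coset containment. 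The paper's version is more elementary in that it never leaves $G$ and does not need normality of $H$. Your side remarks are also fine: the lemma is a statement about the semigroup $\mc{P}(G)$ under product-set multiplication, so the $\chi_{\emptyset}$-corrections in the algebra are indeed irrelevant, and the exclusion of $A=\emptyset$ is harmless since $n(A)$ is only defined to be a subgroup for $A$ generating an idempotent.
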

\begin{proof}
Suppose that $b \in A \cap H$ and let $n \geq 0$ be such that $A^n = H$. Let $a \in A$, then $ab^{n-1} \in H$. Since $b \in H$, so is $b^{1-n}$, whence $a \in H$. This shows that $A \subset H$.
\end{proof}
\begin{lemma}
Let $A \in \mc{P}(G)$ and $n(A) = H$. Then $A \subset gH$ for some $g \in G$.
\end{lemma}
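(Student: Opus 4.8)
The plan is to reduce the statement to the dichotomy already established in \lemref{class} by translating $A$ so that it meets $H$. Recall that, working modulo $(\chi_\emptyset)$, the multiplication on $\mc{P}(G)$ is the product set, so $n(A) = H$ means precisely that $A^n = H$ for some $n \geq 1$, where $A^n = \{a_1 \cdots a_n \mid a_i \in A\}$ and $H$ is a subgroup of $G$. First I would fix an arbitrary element $a \in A$ and pass to the translate $B := a^{-1}A$, which is again a member of $\mc{P}(G)$ and which now contains the identity $e = a^{-1}a$.

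The key step is to verify that $n(B) = H$ as well, and here the hypothesis that $G$ is abelian does the work. Since $a \in A$ we have $a^n \in A^n = H$, hence $a^{-n} \in H$. Using commutativity,
$$B^n = (a^{-1}A)^n = a^{-n}A^n = a^{-n}H = H,$$
so $H$ is an idempotent lying in the cyclic semigroup $\langle B \rangle$. By the uniqueness of the minimal idempotent in such a cyclic semigroup (established in the lemma preceding \lemref{class}), this forces $n(B) = H$.

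Finally I would apply \lemref{class} to $B$: either $B \subset H$ or $B \cap H = \emptyset$. Since $e \in B \cap H$, the second alternative is impossible, so $B \subset H$, that is, $a^{-1}A \subset H$, which is exactly $A \subset aH$; taking $g = a$ finishes the argument. The only genuine point is the normalization trick of choosing the translate $a^{-1}A$ so that it contains $e$ while still retaining $H$ as its associated idempotent; once that is in place the conclusion is immediate from \lemref{class}. Commutativity of $G$ is essential in the displayed identity $B^n = a^{-n}A^n$, and this is precisely where any non-abelian generalization would require extra care.
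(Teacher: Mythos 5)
Your proof is correct and follows essentially the same route as the paper: pick $a \in A$, use $A^n = H$ to translate $A$ into a coset of $H$ (your $g=a$ and the paper's $g=a^{1-n}$ give the same coset since $a^{-n}\in H$), with the dichotomy of \lemref{class} handling the intersection case. The only cosmetic difference is that you normalize first to $B=a^{-1}A\ni e$ and invoke \lemref{class} uniformly, whereas the paper treats the case $A\cap H=\emptyset$ by the direct computation $a^{n-1}A\subset A^n=H$.
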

\begin{proof}
If $A \cap H \neq \emptyset$ then $g = 1$ satisfies by the previous lemma, so suppose that $A \cap H = \emptyset$. Let $n \geq 0 $ be such that $A^n = H$, then we have that $A \subset a^{1-n}H$ for some $a \in A$.
\end{proof}

We can now construct nilpotent elements in the (contracted) semigroup algebra $\mathbb{Q}(\wt{G})$. Let $A \in \mc{P}(G)$ with $n(A) = H$. By \lemref{class} there exists some $g \in G$ such that $A \subset gH$. The element $\chi_{gH} - \chi_A \in J$ where $J = J(\mathbb{Q}(\wt{G}))$ denotes the Jacobson radical. Indeed, suppose first that $g = 1$. For $m$ the smallest integer such that $A^m = H$ we have
\begin{eqnarray*}
(\chi_H - \chi_A)^m &=& \sum_{i=1}^m {{m}\choose{i}}(-1)^{m-i} \chi_H^i \chi_A^{m-i}\\
&=&  \sum_{i=1}^m { {m}\choose{i}}(-1)^{m-i} \chi_H\\
&=& (1 -1)^m \chi_H = 0.
\end{eqnarray*}
Now if $g \notin H$, then $g^{-1}A \subset H$ and $\chi_H - \chi_{g^{-1}A} \in J$. It follows that 
$$\chi_{gH} - \chi_A = \chi_g(\chi_{H} - \chi_{g^{-1}A}) \in J.$$
Denote by $I$ the ideal generated by the elements $\chi_{gH} - \chi_A$ where $A$ runs over the subsets of $G$. Observe that if $A = \{g\}$ is a singleton, that $n(A) = 1$ and $\chi_{\{g\}1} - \chi_{A} = 0$. 
\begin{theorem}\thelabel{Ja}
The Jacobson radical $J$ equals $I$ and we have a ring isomorphism
$$\varphi: \bigoplus_{H < G} \mathbb{Q}(G/H) \to \mathbb{Q}(\wt{G})/J,$$
where the unit element $e_H$ of $\mathbb{Q}(G/H)$ is sent to $\Pi_{\ov{H'} \in \mc{M}(G/H)} (\chi_H - \chi_{H'}).$
\end{theorem}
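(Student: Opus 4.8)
The plan is to prove the statement by sandwiching $I$ between $J$ and itself. The inclusion $I \subseteq J$ is already in hand: each generator $\chi_{gH} - \chi_A$ was shown to be nilpotent (or a unit-translate of a nilpotent), hence lies in $J$, and $J$, being an ideal, absorbs the ideal they generate. It therefore suffices to prove that $\mathbb{Q}(\wt{G})/I$ is semisimple and isomorphic to $\bigoplus_{H<G}\mathbb{Q}(G/H)$; indeed, semisimplicity forces the image of $J$ in $\mathbb{Q}(\wt{G})/I$ to be zero, i.e. $J \subseteq I$, so that $J = I$ and the resulting isomorphism descends to $\mathbb{Q}(\wt{G})/J$.

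First I would identify $\mathbb{Q}(\wt{G})/I$ explicitly. Writing $\bar R := \mathbb{Q}(\wt{G})/I$, the structural lemmas preceding the theorem (existence of the unique minimal idempotent $n(A)=H$, \lemref{class}, and the containment $A \subseteq gH$) show that modulo $I$ every basis vector collapses, $\chi_A \equiv \chi_{gH}$. To turn this into an isomorphism I would introduce the semigroup $\Gamma$ of cosets $gH$ of subgroups $H \le G$ under setwise product, $(gH)(g'H') = gg'\langle H,H'\rangle$, and define the reduction map $\rho\colon \mathbb{Q}(\wt{G}) \to \mathbb{Q}\Gamma$ by $\rho(\chi_A) = \chi_{gH}$, where $H = n(A)$ and $A \subseteq gH$. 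The crucial point is that $\rho$ is a ring homomorphism, which rests on the identity $n(AB) = \langle n(A), n(B)\rangle$: if $A^m = H$ and $B^l = K$ then $(AB)^{ml} = H^lK^m = \langle H,K\rangle$ is the unique idempotent of $\langle AB\rangle$, while $A\subseteq gH$ and $B\subseteq g'K$ give $AB \subseteq gg'\langle H,K\rangle$. Since $\rho$ is onto and annihilates each $\chi_{gH}-\chi_A$, it factors through $\bar R$, and a dimension count against the coset basis of the contracted algebra $\mathbb{Q}\Gamma$ yields $\bar R \cong \mathbb{Q}\Gamma$ (the class of $\chi_G$ being the zero element after contraction).

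Next I would recognize $\Gamma$ as a strong semilattice of groups: the semilattice is the subgroup lattice of $G$ under join $\langle H, H'\rangle$, the maximal subgroup sitting over $H$ is $G/H$ (since $(gH)(g'H) = gg'H$), and the structure maps $G/H \to G/K$ for $H \le K$ are the canonical projections. Thus $\Gamma$ is a commutative Clifford semigroup, and the semisimplicity together with the decomposition $\mathbb{Q}\Gamma \cong \bigoplus_{H<G}\mathbb{Q}(G/H)$ follow from the theory of inverse/Clifford semigroup algebras in characteristic $0$ \cite{Ok}, each factor being semisimple by Maschke's theorem. The orthogonal idempotents realizing this splitting are produced by Möbius inversion on the subgroup lattice: since the idempotents $\chi_H$ satisfy $\chi_H\chi_{H'} = \chi_{\langle H,H'\rangle}$, there is a unique family $\{e_H\}$ with $\chi_H = \sum_{K \ge H} e_K$, and a crosscut (inclusion--exclusion) computation over the minimal subgroups above $H$ gives precisely $e_H = \prod_{\ov{H'} \in \mc{M}(G/H)}(\chi_H - \chi_{H'})$, in exact parallel with the idempotents $\epsilon(G,N)$ of \cite{JLP}; this is the map $\varphi$ of the statement.

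The main obstacle is the verification surrounding these idempotents: that the $e_H$ form a complete orthogonal system ($e_H^2 = e_H$, $e_He_K=0$ for $H\ne K$, and $\sum_{H<G}e_H = 1$) and that each block satisfies $e_H\bar R \cong \mathbb{Q}(G/H)$. Commutativity of $\bar R$ (as $G$ is abelian) makes centrality automatic, so the work reduces to the combinatorics of the subgroup lattice: matching the product formula to the Möbius expansion via the crosscut theorem, and checking that $e_G = \chi_G$ vanishes after contraction, which is exactly what restricts the sum to $H < G$. Once the $e_H$-block is identified with the span of the cosets $\{e_H\chi_{gH} \mid gH \in G/H\}$, the relation $\chi_{gH}\chi_{g'H} = \chi_{gg'H}$ exhibits it as $\mathbb{Q}(G/H)$, completing the decomposition and hence the theorem.
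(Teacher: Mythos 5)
Your proposal is correct, and it reaches the theorem by a genuinely different route through the middle of the argument. Both proofs share the outer skeleton --- the generators $\chi_{gH}-\chi_A$ are nilpotent, so $I\subseteq J$, and once $\mathbb{Q}(\wt{G})/I$ is shown to be semisimple and isomorphic to $\bigoplus_{H<G}\mathbb{Q}(G/H)$ the equality $J=I$ is forced --- but the paper obtains the decomposition by direct computation: it checks $\varphi(e_H)\varphi(e_Z)=0$ for $H\neq Z$ using minimal subgroups $H',Z'\leq HZ$, proves surjectivity by downward induction on the Jordan--H\"older lengths of the subgroups, and concludes with a dimension count plus Wedderburn--Artin. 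You instead identify $\mathbb{Q}(\wt{G})/I$ structurally as the contracted semigroup algebra of the coset semigroup $\Gamma=\{gH\}$ via the reduction map $\rho(\chi_A)=\chi_{g\,n(A)}$; the multiplicativity of $\rho$ rests on the identity $n(AB)=\langle n(A),n(B)\rangle$, which does not appear in the paper but checks out (with $A^m=n(A)$ and $B^l=n(B)$ one has $(AB)^{ml}=A^{ml}B^{ml}=n(A)n(B)$, an idempotent, hence equal to $n(AB)$ by uniqueness, and $A\subseteq gH$, $B\subseteq g'K$ give $AB\subseteq gg'HK$). Recognizing $\Gamma$ as a strong semilattice of the groups $G/H$ then yields the block decomposition from general commutative Clifford/inverse semigroup algebra theory, and the M\"obius--crosscut computation on the interval $[H,G]$ of the subgroup lattice recovers exactly the idempotents $\prod_{\ov{H'}\in\mc{M}(G/H)}(\chi_H-\chi_{H'})$ of the statement. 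Your route buys conceptual transparency and fills in details the paper leaves implicit (that each $\varphi(e_H)$ is idempotent and that $\varphi$ is multiplicative on each block come for free from the semigroup structure, and the dimension count becomes an exact identification with $\mathbb{Q}\Gamma$ rather than an inequality); the cost is the appeal to the semilattice-of-groups decomposition and the crosscut theorem, which the paper's more elementary, self-contained computation avoids.
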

\begin{proof}
In order for $\phi$ to be well-defined, it suffices to check that $\varphi(e_H)\varphi(e_{Z}) = 0$ for two different subgroups $H, Z$ of $G$. To see this, choose subgroups $\ov{H'} \in \mc{M}(G/H), \ov{Z'} \in \mc{M}(G/Z)$ such that $H' \leq HZ$ and $Z' \leq HZ$. In the product $\varphi(e_H)\varphi(e_Z)$ there appear the factors $(\chi_H - \chi_{H'})$ and $(\chi_Z - \chi_{Z'})$. We calculate that their product equals
\begin{equation}\label{jo}
\chi_{HZ} - \chi_{H'Z} - \chi_{HZ'} + \chi_{H'Z'}.
\end{equation}
We have the inclusions $H'Z \leq HZ \leq H'Z$ and $HZ' \leq HZ \leq HZ'$, or $HZ = H'Z = HZ'$. From this it follows that
$$H'Z' = H'HZ' = H'HZ = H'Z = HZ,$$
and we see that \eqref{jo} is indeed 0. To show that $\varphi$ is surjective, we observe that $\varphi(e_H) = \chi_H$ for $H < G$ a maximal subgroup. If $n = l(G)$ is the Jordan-H\"older length of $G$, then for an $H$ of length $n-2$ we have that
$$\varphi(e_H) = \chi_H + \sum_{\ov{H'} \in \mc{M}(G/H)} a_{H'}\chi_{H'} \in \Ima(\varphi).$$
Surjectivity now follows from a downwards induction on the Jordan-H\"older lengths of the subgroups. That $\varphi$ is an isomorphism now follows from a dimension argument and the claim about the Jacobson radical is a direct consequence of Wedderburn-Artin.
\end{proof}
As a corollary we can give a complete list of the primitive central idempotents in $\mathbb{Q}(\wt{G})$. To this extent we denote 
$$\psi(G,H) := \varphi(e_H) = \Pi_{\ov{H'} \in \mc{M}(G/H)} (\chi_H - \chi_{H'}).$$
Also, for $A \subset G$ we have the associated element $\widehat{A} = \frac{1}{|A|}\sum_{g \in A}g$ in $\mathbb{Q}(G)$. We denote the element $\frac{1}{|A|}\sum_{g \in A} \chi_{\{g\}}$ in $\mathbb{Q}(\wt{G})$ by $\check{A}$. Of course, there is an embedding $\mathbb{Q}(G) \subset \mathbb{Q}(\wt{G})$, but this does not correspond to the embedding given by $\varphi$. Finally, by $\check{\epsilon}(G,N)$ we denote the element
$$\check{\epsilon}(G,N) = \check{N} \Pi_{\ov{L} \in \mc{M}(G/N)}(1- \check{L}) \in \mathbb{Q}(\wt{G}).$$

Let $H <G$ be a subgroup and take a subgroup $\ov{N}$ in $G/H$. The group $\ov{N}$ is isomorphic to some $N/H$ with $H \leq N \leq G$ a subgroup. Pick a transversal set $T^N_H$ for $H$ in $N$. Inside the group algebra $\mathbb{Q}(G/H)$ we have the element
$$\widehat{\ov{N}} = \frac{1}{|T^N_H|} \sum_{t \in T^N_H} e_{tH}.$$
It follows that
$$\varphi(\widehat{\ov{N}}) = (\frac{1}{|T^N_H|} \sum_{t \in T^N_H} \chi_{\{t\}}) \psi(G,H) = \check{T^N_H}\psi(G,H)$$
Let $e$ be a primitive idempotent in $\mathbb{Q}(\wt{G})$. By the results in \cite{JLP} and \theref{Ja} 
$$\ov{e} = \varphi(\epsilon(G/H,\ov{N}))$$
 for subgroups $H \leq N \leq G$ such that $G/N$ is cyclic. We calculate
\begin{eqnarray*}
\varphi(\epsilon(G/H,\ov{N})) &=& \varphi(\widehat{\ov{N}}) \Pi_{[\ov{L}] \in \mc{M}(\frac{G/H}{\ov{N}})}(\varphi(e_H) - \varphi(\widehat{\ov{L}}))\\
&=&  \check{T^N_H} \psi(G,H) \Pi_{[\ov{L}] \in \mc{M}(\frac{G/H}{\ov{N}})} \big( \psi(G,H) - \check{T^L_H}\psi(G,H)\big)\\
&=& \psi(G,H)\check{T^N_H} \Pi_{\ov{L} \in \mc{M}(G/N)}(1 - \check{T^L_H})\\
&=& \psi(G,H)\check{\epsilon}(G,N).
\end{eqnarray*}

\begin{theorem}\thelabel{primitiveidempotents}
Let $G$ be a finite abelian group. The primitive central idempotents of $\mathbb{Q}(\wt{G})$ are precisely the elements $\psi(G,H)\check{\epsilon}(G,N)$ for subgroups $H \leq N \leq G$ such that $G/N$ is cyclic.
\end{theorem}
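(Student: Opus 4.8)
The plan is to descend to the semisimple quotient $\mathbb{Q}(\wt{G})/J$, read off its primitive idempotents from \theref{Ja} together with \cite[Corollary 2.1]{JLP}, and lift them back. Since $\mathbb{Q}(\wt{G})$ is a finite-dimensional commutative $\mathbb{Q}$-algebra and $J$ is its (nilpotent) Jacobson radical, idempotents lift uniquely modulo $J$, and the reduction map induces a bijection between the primitive idempotents of $\mathbb{Q}(\wt{G})$ and those of $\mathbb{Q}(\wt{G})/J$; in the commutative setting these coincide with the primitive central idempotents. So it suffices to identify the primitive idempotents of the quotient and to exhibit, for each, an explicit idempotent lift.

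By \theref{Ja} the isomorphism $\varphi$ identifies $\mathbb{Q}(\wt{G})/J$ with the direct sum $\bigoplus_{H<G}\mathbb{Q}(G/H)$. In a finite product of commutative semisimple algebras the primitive idempotents are exactly the primitive central idempotents of the individual factors, regarded as elements supported in a single summand. Hence the primitive idempotents of the quotient are the elements $\epsilon(G/H,\ov{N})$, where $H<G$ runs over the proper subgroups and $\ov{N}$ over the subgroups of $G/H$ with $(G/H)/\ov{N}$ cyclic; by \cite[Corollary 2.1]{JLP} these are precisely the primitive central idempotents of each $\mathbb{Q}(G/H)$. Writing $\ov{N}=N/H$, this is exactly the condition $H\leq N\leq G$ with $G/N$ cyclic. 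The computation carried out just before the statement already shows $\varphi(\epsilon(G/H,\ov{N}))=\psi(G,H)\check{\epsilon}(G,N)$, so the candidate lifts are the elements appearing in the theorem.

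It therefore remains to verify that each $\psi(G,H)\check{\epsilon}(G,N)$ is a genuine idempotent of $\mathbb{Q}(\wt{G})$, not merely modulo $J$; this is the crux of the argument, and it is where I would focus the (light) computational effort. First, each factor $\chi_H-\chi_{H'}$ of $\psi(G,H)$ is idempotent, since $H\leq H'$ gives $\chi_H\chi_{H'}=\chi_{H'}$ and hence $(\chi_H-\chi_{H'})^2=\chi_H-2\chi_{H'}+\chi_{H'}=\chi_H-\chi_{H'}$; as $\mathbb{Q}(\wt{G})$ is commutative, the product $\psi(G,H)$ of these commuting idempotents is again idempotent. Second, $\check{\epsilon}(G,N)$ is the image under the ring morphism $\iota$ of \propref{integralextension} of the idempotent $\epsilon(G,N)\in\mathbb{Q}(G)$, hence idempotent. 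Being a product of two commuting idempotents, $\psi(G,H)\check{\epsilon}(G,N)$ is idempotent, and by uniqueness of idempotent lifts it is the primitive idempotent lifting $\epsilon(G/H,\ov{N})$. Letting $(H,N)$ range over all admissible pairs then yields exactly the asserted list.

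The main obstacle is bookkeeping rather than conceptual: one must ensure that the idempotent-lifting bijection is applied to the honest algebra $\mathbb{Q}(\wt{G})$ (so that the contraction by $\chi_\emptyset$ and by $\chi_G$ is handled consistently throughout), and that distinct admissible pairs $(H,N)$ genuinely yield distinct primitive idempotents, which follows from the injectivity of $\varphi$ and the classification in \cite{JLP}. Both points reduce to the commutativity of $\mathbb{Q}(\wt{G})$ together with the relations $\chi_H\chi_{H'}=\chi_{H'}$ for $H\leq H'$ and the fact that $\iota$ is a ring map.
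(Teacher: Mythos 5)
Your argument follows the paper's own route: the paper likewise reads off the primitive idempotents of the semisimple quotient from \theref{Ja} together with \cite{JLP}, and identifies $\varphi(\epsilon(G/H,\ov{N}))$ with $\psi(G,H)\check{\epsilon}(G,N)$ by exactly the computation you cite. Your explicit verification that each $\psi(G,H)\check{\epsilon}(G,N)$ is an honest idempotent of $\mathbb{Q}(\wt{G})$ (via $\chi_H\chi_{H'}=\chi_{H'}$ for $H\leq H'$ and the ring map $\iota$) just makes precise the idempotent-lifting step the paper leaves implicit, so the two proofs are essentially the same.
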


\section{A more complicated chain}\seclabel{complicatedchain}

One of the ideas of glider representation theory is to build a new sort of representation theory which captures the information given by the algebra or ring filtration. Therefore, we will now consider the generalized character ring $R(\wt{G})$ for the slightly more complicated chain $1 \subset H  \subset G$, where $G$ is finite abelian. From now on, we will denote this ring by $R(H < G)$. When we write $R(\wt{G})$ below, we mean the character ring associated to the chain $1 \subset G$. From the discussions after \propref{integralextension} we know that the irreducible gliders of essential length 2 are determined by $\{ (A, \pi(A)) \in \mc{P}(G) \times \mc{P}(H)\}$. Recall that $\pi: G \twoheadrightarrow H$ is the surjective group morphism obtained by choosing group isomorphisms between $G, H$ and their character groups. Denote by $K = \Ker(\pi)$.
\begin{example}
Let $V_4 = \langle a,b ~\vline~ a^2 = 1 = b^2 \rangle$ and denote $c = ab$. Let $H = \{1, c\}$. If we denote by $T_g$ ($g = a, b ,c$) the irreducible representation which sends $g$ and $1$ to $1$ and the other two elements to -1, then $\pi: V_4 \twoheadrightarrow \{1,c\}$ is given by
$$1 \mapsto 1,~ a \mapsto c, ~ b \mapsto c, ~ c \mapsto 1,$$
whence $K = \{ 1,c\}$. If we would have chosen the isomorphism $V_4 \cong \hat{V}_4, a \mapsto T_c, b \mapsto T_a, c \mapsto T_b$, then $K$ would be $\{1,a\}$.
\end{example}
For $A \in \mc{P}(G)$ we denote the associated element in $R(H < G)$ by $\chi_{(A,\pi(A))}$.\\

When we dealt with the chain $1 \subset G$, we only had to look at irreducible gliders of essential length 1 or 0, the latter corresponding to $\mathbb{C} \supset 0 \supset \ldots$ which yielded a 0-element in $R(\wt{G})$. For $1 \subset H \subset G$, we also have to consider irreducible gliders of essential length 1, which are of the form $\Omega \supset M \supset \mathbb{C}m \supset 0 \supset \ldots$, with $M = \mathbb{C}Hm$. It follows that $M \supset M \supset \mathbb{C}m$ is a $\mathbb{C} \subset \mathbb{C}H$-glider. Of course, given $\Omega$, we can associate to it the irreducible $F\mathbb{C}G$-glider
$$\Omega \supset \mathbb{C}Gm \supset M \supset \mathbb{C}m \supset 0 \supset \ldots$$
and we see that different $\Omega's$ yield different gliders. In any case, we see that the irreducible gliders of essential length $1$ correspond to the irreducible gliders for $1 \subset H$, which we know are labeled by the subsets of $H$. For $B \in \mc{P}(H)$ we write $\chi_{(\emptyset, B)}$ for the element in $R(H<G)$. Finally, the irreducible glider $\mathbb{C} \supset 0 \supset \ldots$ of essential length 0 yields the element $\chi_{(\emptyset,\emptyset)}$.\\

Let us determine the multiplication rules in $R(H < G)$. First, let $M \supset M_1 \supset \mathbb{C}m \supset 0 \supset \ldots$ and $N \supset N_1 \supset \mathbb{C}n \supset 0 \supset \ldots$ be two irreducible gliders of essential length 2 corresponding to $A$ and $B$ in $\mc{P}(G)$. In the fragment decomposition of $M \ot N$ there appears only one irreducible glider of essential length 2, namely
$$ \bigoplus_{c \in AB} T_c \supset \bigoplus_{d \in \pi(AB)} S_d \supset \mathbb{C} m \ot n,$$
where $T_a$ denotes an irreducible representation of $G$, and $S_b$ one of $H$. As $H$-representations, we have 

$$M_1 \ot N_1 \cong (\bigoplus_{h \in \pi(A)}S_h) \ot (\bigoplus_{h' \in \pi(B)} S_{h'}) \cong \bigoplus_{c \in \pi(AB)} S_c^{n_c},$$
for some $n_c \geq 1$. In fact, whenever $n_c >1$, we have $n_c - 1$ irreducible gliders of essential length 1 splitting off. In fact, these gliders are isomorphic to $\chi_{(\emptyset, \{c\})}$. Finally, in the decomposition of $M \ot N$ there can appear irreducible gliders of essential length $0$, from which it follows that
$$\chi_{(A,\pi(A))}\chi_{(B,\pi(B))} = \chi_{(AB, \pi(AB))} + \sum_{c \in \pi(AB)} (n_c-1)\chi_{(\emptyset, \{c\})} + d \chi_{(\emptyset,\emptyset)},$$
for some $d \geq 0$.
Next, we consider the product of $\chi_{(A,\pi(A))}$ and $\chi_{(\emptyset,B)}$ for $A \subset G$, $B \subset H$. Their tensor product becomes
$$M \ot N \supset M_1 \ot \mathbb{C}n \supset 0 \supset \ldots$$
It follows that
$$\chi_{A,\pi(A))}\chi_{(\emptyset,B)} = \sum_{h \in \pi(A)} \chi_{(\emptyset, Bh)} + d\chi_{(\emptyset,\emptyset)},$$
for some $d \geq 0$. Finally, it is straightforward that 
$$\chi_{(\emptyset,B)}\chi_{(\emptyset,C)} = \chi_{(\emptyset, BC)} + d\chi_{(\emptyset,\emptyset)},$$
for $B,C \in \mc{P}(H)$ and some $d \geq 0$.\\

Combining everything we see that $R(H <G)$ is the $\mathbb{Z}$-algebra generated by the elements of  
\begin{equation}\nonumber
\resizebox{0.9 \hsize}{!}{$S = \{ \chi_{(A,\pi(A))} ~\vline~ (A,\pi(A)) \in \mc{P}(G) \times \mc{P}(H)\} \cup \{ \chi_{(\emptyset, B)}~\vline~(\emptyset, B), B \in \mc{P}(H)\} \cup \{\chi_{(\emptyset,\emptyset)}\}.$}
\end{equation}
The element $\theta = \chi_{(\emptyset,\emptyset)}$ is again a 0-element hence we can work modulo the two-sided ideal $\mathbb{Z}\chi_{(\emptyset,\emptyset)}$ and we obtain the following relations
\begin{eqnarray}
 \chi_{(A,\pi(A))}\chi_{(B,\pi(B))} &=& \chi_{(AB, \pi(AB))} + \sum_{c \in \pi(AB)} (n_c-1)\chi_{(\emptyset, \{c\})}, \label{rela}\\
 \chi_{A,\pi(A))}\chi_{(\emptyset,B)} &=& \sum_{h \in \pi(A)} \chi_{(\emptyset, Bh)},\nonumber\\
 \chi_{(\emptyset,B)}\chi_{(\emptyset,C)} &=& \chi_{(\emptyset, BC)}. \nonumber
 \end{eqnarray}

We again wonder whether we can find nilpotent and idempotent elements in 
$$\mathbb{Q}(H<G) : = \mathbb{Q} \ot_{\mathbb{Z}} R(H<G)$$ 
(in fact we work module the two-sided ideal $\mathbb{Q}\chi_{(\emptyset,\emptyset)}$). From the multiplication rules above, we see that the element $\chi_{(\emptyset,\{1\})}$ is a central idempotent and one verifies that we obtain a ring isomorphism
\begin{equation}\label{firstiso}
\mathbb{Q}(H <G) \mapright{\cong} \mathbb{Q}(H<G)(1-\chi_{(\emptyset,\{1\})}) \times \mathbb{Q}(\wt{H}),
\end{equation}
which is defined by
$$\chi_{(A,\pi(A))} \mapsto (\chi_{(A,\pi(A))} - \sum_{h \in \pi(A)} \chi_{(\emptyset,\{h\})}, \sum_{h \in \pi(A)} \chi_{(\emptyset,\{h\})})$$
and
$$\chi_{(\emptyset,B)} \mapsto (0, \chi_{(\emptyset,B)}).$$
We already know what the Jacobson radical of $\mathbb{Q}(\wt{H})$ is, so we focus on \newline$T: =  \mathbb{Q}(H<G)(1-\chi_{(\emptyset,\{1\})})$. Denote by $f: \mathbb{Q}(H <G) \to T$ the ring morphism obtained by composing the above isomorphism with the projection on $T$. Because 
$$f(\chi_{(A,\pi(A))})f(\chi_{(B,\pi(B))}) = f(\chi_{(AB,\pi(A)\pi(B))}),$$
the calculations from \secref{primitiveidempotents} that lead to the nilpotent elements $\chi_A - \chi_{gn(A)}$ if $A \subset gn(A)$ go through here. Explicitly, for $A \subset G$ there exist $g \in A$ and a subgroup $N \leq G$ such that $A \subset gN$ and then $f(\chi_{(A,\pi(A))} - \chi_{(gN,\pi(g)\pi(N))})$ is nilpotent in $S$. If we denote by $I$ the ideal of $S$ generated by these elements we arrive at 
\begin{proposition}
We have an isomorphism of rings
$$\bigoplus_{N < G} \mathbb{Q}(G/N) \to \frac{\mathbb{Q}(H < G)(1 - \chi_{(\emptyset,\{1\})})}{I},$$
where the unit element $e_N$ of $\mathbb{Q}(G/N)$ is sent to $\Pi_{\ov{N'} \in \mc{M}(G/N)} (f(\chi_{(N, \pi(N))})-f(\chi_{(N',\pi(N'))}))$.
\end{proposition}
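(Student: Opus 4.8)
The plan is to recognize the ring $T := \mathbb{Q}(H<G)(1-\chi_{(\emptyset,\{1\})})$ as an isomorphic copy of the generalized character ring $\mathbb{Q}(\wt{G})$ for the chain $1 \subset G$ studied in \secref{primitiveidempotents}, and then to transport \theref{Ja} across this isomorphism. Concretely, I would define
$$\Phi: \mathbb{Q}(\wt{G}) \longrightarrow T, \qquad \chi_A \longmapsto f(\chi_{(A,\pi(A))}),$$
and first check it is a homomorphism of $\mathbb{Q}$-algebras. This is immediate from the two multiplication rules in play: modulo the zero element $\chi_\emptyset$ one has $\chi_A\chi_B = \chi_{AB}$ in $\mathbb{Q}(\wt{G})$, while by the identity $f(\chi_{(A,\pi(A))})f(\chi_{(B,\pi(B))}) = f(\chi_{(AB,\pi(A)\pi(B))})$ recorded above the images multiply by the very same rule. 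The unit $\chi_{\{1\}}$ is sent to $f(\chi_{(\{1\},\{1\})}) = 1 - \chi_{(\emptyset,\{1\})}$, the unit of $T$.

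Next I would show that $\Phi$ is an isomorphism. Surjectivity is clear, because $f$ annihilates every $\chi_{(\emptyset,B)}$ (these sit in the $\mathbb{Q}(\wt{H})$-factor of \eqref{firstiso}), so $T$ is spanned by the elements $f(\chi_{(A,\pi(A))})$, that is, by the image of $\Phi$. For injectivity I would compare dimensions. Working modulo $\mathbb{Q}\chi_{(\emptyset,\emptyset)}$, the ring $\mathbb{Q}(H<G)$ has $\mathbb{Q}$-basis the $\chi_{(A,\pi(A))}$ with $\emptyset \neq A \in \mc{P}(G)$ together with the $\chi_{(\emptyset,B)}$ with $\emptyset \neq B \in \mc{P}(H)$, so $\dim_\mathbb{Q}\mathbb{Q}(H<G) = (2^{|G|}-1)+(2^{|H|}-1)$; since $\dim_\mathbb{Q}\mathbb{Q}(\wt{H}) = 2^{|H|}-1$, the product decomposition \eqref{firstiso} forces $\dim_\mathbb{Q} T = 2^{|G|}-1 = \dim_\mathbb{Q}\mathbb{Q}(\wt{G})$. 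A surjection between $\mathbb{Q}$-vector spaces of equal finite dimension is bijective, so $\Phi$ is an isomorphism of rings.

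Finally I would invoke \theref{Ja}. Under $\Phi$ the generator $f(\chi_{(A,\pi(A))}) - f(\chi_{(gN,\pi(g)\pi(N))})$ of $I$, with $A \subset gN$ and $N = n(A)$, is the image of $\chi_A - \chi_{gN}$; hence $\Phi$ identifies $I \subset T$ with the ideal of $\mathbb{Q}(\wt{G})$ generated by the elements $\chi_{gN}-\chi_A$, which by \theref{Ja} is the Jacobson radical $J$. Thus $\Phi$ descends to an isomorphism $\mathbb{Q}(\wt{G})/J \to T/I$, and precomposing with the isomorphism $\varphi$ of \theref{Ja} yields
$$\bigoplus_{N<G}\mathbb{Q}(G/N) \;\xrightarrow{\ \varphi\ }\; \mathbb{Q}(\wt{G})/J \;\xrightarrow{\ \Phi\ }\; T/I.$$
Along this composite the unit $e_N$ of $\mathbb{Q}(G/N)$ goes first to $\psi(G,N)=\Pi_{\ov{N'}\in\mc{M}(G/N)}(\chi_N-\chi_{N'})$ and then, since $\Phi$ is a ring map, to $\Pi_{\ov{N'}\in\mc{M}(G/N)}\big(f(\chi_{(N,\pi(N))})-f(\chi_{(N',\pi(N'))})\big)$, exactly as claimed. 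The one step that goes beyond a verbatim rerun of \secref{primitiveidempotents} is the identification $T \cong \mathbb{Q}(\wt{G})$, and its delicate ingredient --- the main obstacle --- is the dimension count ensuring that $\Phi$ does not collapse the $\chi_{(A,\pi(A))}$ any further.
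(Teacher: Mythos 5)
Your proof is correct, and it packages the argument differently from the paper. The paper's own proof is a one-line appeal to the preceding discussion: since $f(\chi_{(A,\pi(A))})f(\chi_{(B,\pi(B))}) = f(\chi_{(AB,\pi(A)\pi(B))})$, one simply re-runs the proof of \theref{Ja} inside $T = \mathbb{Q}(H<G)(1-\chi_{(\emptyset,\{1\})})$ with $\chi_A$ replaced everywhere by $f(\chi_{(A,\pi(A))})$ (well-definedness via the orthogonality computation, surjectivity by downward induction on Jordan--H\"older length, then a dimension argument). You instead prove the stronger intermediate statement that $\Phi:\chi_A \mapsto f(\chi_{(A,\pi(A))})$ is a ring isomorphism $\mathbb{Q}(\wt{G}) \to T$ and then transport \theref{Ja} wholesale, checking that $\Phi$ carries the generators $\chi_{gN}-\chi_A$ of $J$ onto the generators of $I$. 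This buys something the paper only asserts later without proof (in the remark on longer chains, where $\mathbb{Q}(G_1<G_2<G_3)(1-\chi_{(\emptyset,\emptyset,\{1\})}) \cong \mathbb{Q}(G_2<G_3)$ is claimed to follow from ``similar calculations''), and it isolates the one genuinely new ingredient: the dimension count $\dim_{\mathbb{Q}} T = 2^{|G|}-1$, which rests on the decomposition \eqref{firstiso} together with the fact that the $\chi_{(A,\pi(A))}$ and $\chi_{(\emptyset,B)}$ form a $\mathbb{Q}$-basis of $\mathbb{Q}(H<G)$ modulo $\chi_{(\emptyset,\emptyset)}$ (the analogue of \theref{charindep} for the chain $1<H<G$, which the paper uses implicitly when treating $R(H<G)$ as a semigroup algebra on the set $S$). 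Both routes are sound; yours is more structural and makes the word ``analogous'' in the paper precise, at the modest cost of having to justify that basis claim.
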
 
\begin{proof}
By the discussion above, the proof is analogous to the proof of \theref{Ja}.
\end{proof}

 \begin{remark}
We observe that the elements $\chi_{(\emptyset,B)}, B \in \mc{P}(H)$ generate an ideal $P_H$ and we have the chain of ideals
 $$\mathbb{Q}\chi_{(\emptyset,\emptyset)} \triangleleft P_H \triangleleft \mathbb{Q}(H<G).$$
 From the defining relations it follows that the quotient $\mathbb{Q}(H<G)/P_H$ is isomorphic to $\mathbb{Q}(e < G)$ and the ideal $P$ itself is isomorphic (as algebra without considering the unit) to $\mathbb{Q}(e<H)$. Consider now a chain of three abelian groups $e < G_1 < G_2 < G_3 = G$. We can label the irreducible gliders of essential length 3 by $\chi_{(A,\pi_1(A),\pi_2(A))}$ for $A \in \mc{P}(G)$ and $\pi_1: G_3 \to G_2, \pi_2: G_2 \to G_1$ fixed epimorphisms. One can write down the defining relations which are similar to \eqref{rela}. The element $\chi_{(\emptyset,\emptyset,\{1\})}$ is again a central idempotent and we have a similar ring isomorphism as in \eqref{firstiso}
 $$\mathbb{Q}(G_1<G_2<G_3) \cong \mathbb{Q}(G_1<G_2<G_3)(1 - \chi_{(\emptyset,\emptyset,\{1\})}) \times \mathbb{Q}(e< G_1).$$
Again similar to the above calculations, one checks that $\mathbb{Q}(G_1<G_2<G_3)(1 - \chi_{(\emptyset,\emptyset,\{1\})}) $ is isomorphic to $\mathbb{Q}(G_2<G_3)$. By induction one then shows that for any chain $e < G_1 < \ldots < G_d = G$ of abelian subgroups we have that
$$\mathbb{Q}(G_1<\ldots < G_d) \cong \mathbb{Q}(e<G_d) \times \mathbb{G}(e<G_{d-1}) \times \cdots \times \mathbb{Q}(e<G_1)$$
and this corresponds to a chain of ideals
$$\mathbb{Q}\chi_{(\emptyset,\ldots,\emptyset)} \triangleleft P_{G_1} \triangleleft \ldots \triangleleft P_{G_{d-1}} \triangleleft \mathbb{Q}(G_2<\ldots < G_d).$$
 \end{remark}

\section{The quaternion group $Q_8$}

In this section we compute the generalized character ring for the non-abelian group \newline$Q_8 = \langle i,j,k ~\vline ~i^2 = j^2 = k^2 = ijk \rangle$ with chain $1 \subset Q_8$. The character table is given by
$$\begin{array}{c|ccccc}
& \{1\} & \{-1\} & \{i,-i\} & \{j,-j\} & \{k,-k\} \\
\hline
T_1 & 1 & 1 & 1 & 1 & 1\\
T_i & 1 & 1 & 1 & -1 & -1\\
T_j & 1 & 1 & -1 & 1 & -1\\
T_k & 1 & 1 & -1 & -1 & 1\\
U & 2 & -2 & 0 & 0 & 0
\end{array}$$
The irreducible characters associated to the one-dimensional representations generate an abelian subgroup of the character ring $R(Q_8)$ isomorphic to $V_4 = \langle a,b~\vline~ a^2 = b^2 \rangle$. We fix the isomorphism
$$1 \mapsto T_1 \quad a \mapsto T_i \quad b \mapsto T_j \quad c \mapsto T_k$$
To discuss the behavior of the two-dimensional irreducible representation $U$ we fix a basis $\{e_1,e_2\}$ such that $U$ has the following presentation:
$$i \mapsto \begin{pmatrix} 0 & i \\ i  & 0\end{pmatrix},~ j \mapsto \begin{pmatrix} - i & 0 \\ 0 & i \end{pmatrix}.$$
Theorem 3.13 from \cite{CVo2} together with Schur's lemma show that every point $[\lambda:\mu] \in \mathbb{P}^1$ determines an irreducible glider representation $U \supset \mathbb{C}(\lambda e_1 + \mu e_2)$ and two different points in $\mathbb{P}^1$ yield non-isomorphic gliders. Since $U$ is 2-dimensional, there also exist irreducible gliders of the form $U^{\oplus 2} \supset \mathbb{C}(u_1 + u_2)$ such that $\dim_{\mathbb{C}}(<u_1,u_2>) = 2$. By the following lemma, there is, up to isomorphism, only one such irreducible glider.
\begin{lemma}
Let $M \supset M_1 \supset 0 \supset \ldots, N \supset N_1 \supset 0 \supset \ldots$ be irreducible gliders with $M \cong U^{\oplus 2} \cong N$. Then both gliders are isomorphic.
\end{lemma}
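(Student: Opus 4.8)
The plan is to reduce the statement to a transitivity property of the $\mathbb{C}Q_8$-automorphism group of $U^{\oplus 2}$, exploiting that $U$ is absolutely irreducible. First I would record the precise shape of the two gliders. By the structure of irreducible gliders over a finite algebra filtration (\propref{anti-diagonal} together with \cite[Theorem 3.13]{CVo2}), an irreducible glider of essential length $1$ and zero body has a one-dimensional top piece $M_1 = \mathbb{C}m$ and satisfies $M = \mathbb{C}Q_8\,m$. Fixing the identification $M \cong U \oplus U$ and writing $m = (u_1, u_2)$ with $u_1, u_2 \in U$, I would observe that $\dim_{\mathbb{C}} \mathbb{C}Q_8\,m = 4$ forces $u_1, u_2$ to be linearly independent in $U$: if instead $u_2 = \kappa u_1$ for some scalar $\kappa$ (or one of them is zero), then $m$ lies in a graph submodule $\{(x,\kappa x) : x \in U\} \cong U$, so $\mathbb{C}Q_8\,m$ would be at most $2$-dimensional. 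Thus for both $M \supset \mathbb{C}m$ and $N \supset \mathbb{C}n$ the generating vectors $m=(u_1,u_2)$ and $n=(v_1,v_2)$ have components forming a basis of $U$, which is the content of the hypothesis $\dim_{\mathbb{C}}\langle u_1,u_2\rangle = 2$.

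Next I would identify the relevant endomorphism algebra. Since $K=\mathbb{C}$ is algebraically closed of characteristic $0$ and $U$ is irreducible, Schur's lemma gives $\End_{\mathbb{C}Q_8}(U) = \mathbb{C}$, whence $\End_{\mathbb{C}Q_8}(U \oplus U) \cong M_2(\mathbb{C})$, with a matrix $A = (a_{pq})$ acting by
$$\Phi_A(x,y) = (a_{11}x + a_{12}y,\; a_{21}x + a_{22}y), \qquad x,y \in U.$$
Because each entry acts as a scalar, $\Phi_A$ automatically commutes with the $Q_8$-action, and the $\mathbb{C}Q_8$-automorphisms of $U \oplus U$ are then exactly the $\Phi_A$ with $A \in \mathrm{GL}_2(\mathbb{C})$.

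Finally I would transport one glider onto the other. Because $u_1,u_2$ and $v_1,v_2$ are each bases of $U$, the matrix $A \in \mathrm{GL}_2(\mathbb{C})$ expressing $(v_1,v_2)$ in the basis $(u_1,u_2)$ is invertible, and by construction $\Phi_A(m)=\Phi_A(u_1,u_2)=(v_1,v_2)=n$. Hence $\Phi_A$ is a $\mathbb{C}Q_8$-isomorphism $M \to N$ with $\Phi_A(\mathbb{C}m) = \mathbb{C}n$, that is $\Phi_A(M_1)=N_1$; since a glider isomorphism is precisely a module isomorphism carrying $M_1$ onto $N_1$, this exhibits the two gliders as isomorphic.

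The main obstacle, and really the only point needing care, is the first step: justifying via the Goursat/Schur argument that the cyclic vector $m$ generates the full $4$-dimensional module exactly when its two components are linearly independent in $U$, and confirming that this is the correct reading of the genericity hypothesis. Once both gliders are normalized to this form, the remaining steps are routine linear algebra over the field $\End_{\mathbb{C}Q_8}(U)=\mathbb{C}$, and no further subtlety arises.
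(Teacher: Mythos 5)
Your argument is correct and follows essentially the same route as the paper's own proof: identify $\End_{\mathbb{C}Q_8}(U^{\oplus 2})$ with $M_2(\mathbb{C})$ via Schur's lemma and realize the base-change matrix between the two bases $\{u_1,u_2\}$ and $\{v_1,v_2\}$ of $U$ as a module automorphism carrying one generating line onto the other. You merely spell out two points the paper leaves implicit (why the components of the generator must be linearly independent, and why $\Phi_A$ commutes with the $Q_8$-action), which is a harmless elaboration rather than a different approach.
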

\begin{proof}
There exist $u_i, u_i' \in U$ ($i=1,2$) such that $M_1 = \mathbb{C}(u_1 + u_2), N_1 = \mathbb{C}(u'_1 + u'_2)$. Let $B$ be a base change matrix for $\{u_1,u_2\}$ and $\{u'_1,u'_2\}$. Since $\End_{Q_8}(U^{\oplus 2}) \cong M_2(\mathbb{C})$, we have that $B$ yields an isomorphism between both gliders.
\end{proof}
As a corollary we deduce that the irreducible gliders of essential length 1 and zero body are labeled by
$$\mc{P}(V_4) \times (\mathbb{P}^1 \sqcup \ast),$$
where $\ast$ indicates that $U^{\oplus 2}$ appears in the decomposition of $M$. If an irreducible character has label $(A,[\lambda:\mu])$, we denote the associated generalized character by $\chi_{(A,[\lambda: \mu])}$ as we did in the abelian case. For example, the character of 
$$T_1 \oplus T_b \oplus U^{\oplus 2} \supset \mathbb{C}(t_1 + t_b + u_1 + u_2) \supset 0 \supset \ldots$$
is denoted by $\chi_{(\{1,b\},\ast)}$.\\

Let us now compute the multiplication rules in $R(\wt{Q_8})$. First of all, the character associated to the irreducible glider $\mathbb{C} \supset 0$ is again denoted by $\chi_{(\emptyset,\emptyset)}$, which is a $0$-element. In what follows, we will always set this element equal to 0. We wonder what happens to the tensor product
\begin{equation}\label{u}
\big[U \supset \mathbb{C}(\lambda_1e_1 + \mu_1e_2) \big]\ot \big[U \supset \mathbb{C}(\lambda_2e_1 + \mu_2e_2)\big].
\end{equation}
We have the following classical result
\begin{proposition}
Let $G$ be a finite group, $V$ an irreducible representation. Then $V \ot V$ is irreducible if and only if $V$ is one-dimensional.
\end{proposition}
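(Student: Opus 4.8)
The plan is to treat the two implications separately, the reverse one being the only one with content. The forward (``if'') direction is immediate: if $\dimension_{\mathbb{C}} V = 1$ then $V \ot V$ is again one-dimensional, and a one-dimensional representation has no nonzero proper subspace, hence is irreducible. So everything reduces to the converse.

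For the converse I would prove the contrapositive: assuming $n := \dimension_{\mathbb{C}} V \geq 2$, I produce a nonzero proper $G$-subrepresentation of $V \ot V$. The central observation is that the flip $\tau \colon V \ot V \to V \ot V$, $v \ot w \mapsto w \ot v$, is a $G$-equivariant involution: since the diagonal action reads $g \cdot (v \ot w) = gv \ot gw$, one has $\tau(g \cdot (v \ot w)) = gw \ot gv = g \cdot \tau(v \ot w)$, so $\tau$ commutes with $G$, while $\tau^2 = \mathrm{id}$. As we work over $\mathbb{C}$, the operator $\tau$ is diagonalizable with eigenvalues $\pm 1$, and its eigenspaces are precisely the symmetric square $\mathrm{Sym}^2 V$ (eigenvalue $+1$) and the exterior square $\wedge^2 V$ (eigenvalue $-1$). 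Because $\tau$ is $G$-equivariant, both eigenspaces are subrepresentations, so
$$V \ot V = \mathrm{Sym}^2 V \oplus \wedge^2 V$$
is a direct-sum decomposition of $G$-representations.

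The final step is a dimension count showing both summands are nonzero exactly when $n \geq 2$. One has $\dimension_{\mathbb{C}} \mathrm{Sym}^2 V = \binom{n+1}{2} = \tfrac{n(n+1)}{2}$ and $\dimension_{\mathbb{C}} \wedge^2 V = \binom{n}{2} = \tfrac{n(n-1)}{2}$; these may also be read off from the classical character identities $\chi_{\mathrm{Sym}^2 V}(g) = \tfrac12(\chi_V(g)^2 + \chi_V(g^2))$ and $\chi_{\wedge^2 V}(g) = \tfrac12(\chi_V(g)^2 - \chi_V(g^2))$ evaluated at $g = 1$. For $n \geq 2$ both dimensions are strictly positive, so $V \ot V$ is the direct sum of two nonzero $G$-subrepresentations and is therefore reducible, completing the contrapositive.

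I do not anticipate a genuine obstacle; the only points needing care are that the symmetric/antisymmetric splitting is genuinely $G$-stable --- which is exactly the commutation of $\tau$ with the action --- and that characteristic $0$ guarantees the eigenspace decomposition of the involution $\tau$ (both hold since $K = \mathbb{C}$). This elementary proposition then serves as the starting point for analyzing the tensor square in \eqref{u}.
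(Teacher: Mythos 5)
Your proof is correct and rests on the same key idea as the paper's: the $G$-equivariance of the flip map $\tau(v \ot w) = w \ot v$. The paper simply notes that $\tau$ is a non-scalar element of $\End_G(V \ot V)$ and invokes Schur's lemma, whereas you make the $\pm 1$-eigenspace decomposition into $\mathrm{Sym}^2 V \oplus \wedge^2 V$ explicit and count dimensions — which in effect supplies the detail the paper leaves implicit when asserting $\tau$ is not scalar.
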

\begin{proof}
If $\dim_\mathbb{C}(V) > 1$, then the switch map $\tau : V\ot V \to V \ot V,~v\ot w \mapsto w \ot v$ is not a scalar multiplication of the identity morphism, hence $\dim_\mathbb{C}\End_G(V\ot V) >1$, i.e. $V \ot V$ is reducible.  
\end{proof}
We know that $U \ot U = S(U \ot U) \oplus A(U \ot U)$ decomposes into a symmetric and antisymmetric part, from which it follows that $\chi_U = \chi_S + \chi_A$, see e.g. \cite[Chapter 19]{JL}. Here, $\chi_U$ denotes the associated character of $U$, i.e. $\chi_U \in R(Q_8)$. One checks that $\chi_A = \chi_{T_1}$ and $\chi_S = \chi_{T_i} + \chi_{T_j} + \chi_{T_k}$. A decomposition is given by
\begin{eqnarray}
U \ot U &=& \underbrace{\mathbb{C}(e_1 \ot e_2 - e_2 \ot e_1)}_{T_1} \oplus \underbrace{\mathbb{C}(e_1 \ot e_2 + e_2 \ot e_1)}_{T_j}\nonumber \\ \label{decoUU}
&& \oplus \underbrace{\mathbb{C}(e_1 \ot e_1 + e_2 \ot e_2)}_{T_k} \oplus \underbrace{\mathbb{C}(e_1 \ot e_1 - e_2 \ot e_2)}_{T_i}.
\end{eqnarray}
Let $[\lambda_i:\mu_i] \in \mathbb{P}^1,~i=1,2$, then $(\lambda_1e_1+\mu_1e_2) \ot (\lambda_2e_1 + \mu_2e_2)$ decomposes into
$$a(e_1 \ot e_2 - e_2 \ot e_1) + b(e_1 \ot e_2 + e_2 \ot e_1) + c(e_1 \ot e_1 + e_2 \ot e_2) + d(e_1 \ot e_1 - e_2 \ot e_2).$$
and we obtain that \eqref{u} remains irreducible if and only if $abcd \neq 0$. Since
\begin{equation}\label{ju}
\begin{pmatrix} \lambda_1\lambda_2 & \lambda_1\mu_2 & \mu_1\lambda_2 & \mu_1\mu_2 \end{pmatrix} = \begin{pmatrix} c+d & a+b & b-a & c-d \end{pmatrix}
\end{equation}
this is equivalent to 
$$ \left| \begin{array}{cc}
\lambda_1 & \pm \lambda_2 \\
\mu_1 & \mu_2 \end{array}\right| \neq 0 {\rm~and~} \left| \begin{array}{cc}
\lambda_1 & \pm \mu_1 \\
\mu_2 & \lambda_2 \end{array}\right| \neq 0.$$
\begin{remark}
The above is independent of the choice of base. Indeed, suppose that $\begin{pmatrix}e_1&e_2 \end{pmatrix} = \begin{pmatrix}f_1&f_2 \end{pmatrix}B$ is a base change, then 
$$\begin{pmatrix}e_1 \ot e_1 & e_1 \ot e_2 & e_2 \ot e_1 & e_2 \ot e_2 \end{pmatrix} = \begin{pmatrix}f_1 \ot f_1 & f_1 \ot f_2 & f_2 \ot f_1 & f_2 \ot f_2 \end{pmatrix}B \ot B,$$
where $B \ot B$ denotes the Kronecker tensor product of two matrices. With regard to the basis $\{f_1,f_2\}$ equation \eqref{ju} becomes
$$\begin{pmatrix} \lambda_1\lambda_2 & \lambda_1\mu_2 & \mu_1\lambda_2 & \mu_1\mu_2 \end{pmatrix}(B\ot B)^T = \begin{pmatrix} c+d & a+b & b-a & d-c \end{pmatrix}(B\ot B)^T,$$
and $B \ot B$ is invertible.
\end{remark}
Generically, we have
$$\chi_{(\emptyset,[\lambda_1:\mu_1])}\chi_{(\emptyset,[\lambda_2:\mu_2])} = \chi_{(V_4,\emptyset)}.$$
If, for example, $\lambda_1\mu_2 = \mu_1\lambda_2$, but $\lambda_1\lambda_2 \neq \pm \mu_1\mu_2$ then $\chi_{(\emptyset,[\lambda_1:\mu_1])}\chi_{(\emptyset,[\lambda_2:\mu_2])} = \chi_{(\{a,b,c\},\emptyset)}$ and similar for the other non-generic cases. In particular, $\chi_{(\emptyset,[\lambda:\mu])}^2 = \chi_{(\{a,b,c\},\emptyset)}$ for \newline$[\lambda:\mu] \neq [1:1], [1:-1], [1:0], [0:1], [1:i], [1:-i]$.\\

To calculate the product $\chi_{(\emptyset,\ast)}\chi_{(\emptyset,[\lambda:\mu])}$ we may present the glider associated to the first factor by
$$U \oplus U \supset \mathbb{C}(e_1 + e_2) \supset 0 \supset \ldots$$
Hence the tensor product with $U \supset \mathbb{C}(\lambda e_1 + \mu e_2) \supset 0 \supset \ldots$ becomes
$$(U \ot U) \oplus (U \ot U) \supset \mathbb{C}(\underbrace{(\lambda e_1 \ot e_1 + \mu e_1 \ot e_2)}_{v_1} + \underbrace{(\lambda e_2 \ot e_1 + \mu e_2 \ot e_2)}_{v_2}) \supset 0 \supset \ldots$$
We denote the coefficients of the decomposition of $v_i$ with regard to \eqref{decoUU} by $a_i,b_i,c_i,d_i,~i=1,2$. The vector $v_1$ yields the equation 
$$\begin{pmatrix} \lambda & \mu & 0 & 0 \end{pmatrix} = \begin{pmatrix} c_1+d_1 & a_1+b_1 & b_1-a_1 & c_1-d_1 \end{pmatrix},$$
from which we obtain that $a_1b_1c_1d_1 \neq 0$ unless $\lambda \mu =0$. If, say, $\lambda =0$, then $c_1=d_1=0$. In this case, the decomposition of the $v_2$ yields that $c_2d_2 \neq 0$. So we arrive at
$$\chi_{(\emptyset,\ast)}\chi_{(\emptyset,[\lambda:\mu])} = \chi_{(V_4,\emptyset)}, \quad \forall~ [\lambda:\mu] \in \mathbb{P}^1.$$
A similar reasoning entails that
$$\chi_{(\emptyset,\ast)}^2 = \chi_{(V_4,\emptyset)}.$$
Next, to calculate the products of the form $\chi_{(A,\emptyset)}\chi_{(\emptyset,[\lambda:\mu])}$, we need to write explicit isomorphisms $T_r \ot U \cong U$ for $r = 1,i,j,k$. These are given by
$$\begin{array}{c|c}
U \mapright{\varphi_1} T_1 \ot U & U \mapright{\varphi_i} T_i \ot U \\
e_1 \mapsto t_1 \ot e_1 & e_1 \mapsto t_i \ot e_2\\
e_2 \mapsto t_1 \ot e_2 & e_2 \mapsto t_i \ot e_1\\
\hline
&\\
U \mapright{\varphi_j} T_j \ot U & U \mapright{\varphi_k} T_k \ot U \\
e_1 \mapsto -t_j \ot e_1 & e_1 \mapsto -t_k \ot e_2\\
e_2 \mapsto t_j \ot e_2 & e_2 \mapsto t_k \ot e_1
\end{array} \Rightarrow \left\{\begin{array}{l}
\chi_{(\{1\},\emptyset)}\chi_{(\emptyset,[\lambda:\mu])} = \chi_{(\emptyset,[\lambda:\mu])}\\
\chi_{(\{a\},\emptyset)}\chi_{(\emptyset,[\lambda:\mu])} = \chi_{(\emptyset,[\mu:\lambda])}\\
\chi_{(\{b\},\emptyset)}\chi_{(\emptyset,[\lambda:\mu])} = \chi_{(\emptyset,[-\lambda:\mu])}\\
\chi_{(\{c\},\emptyset)}\chi_{(\emptyset,[\lambda:\mu])} = \chi_{(\emptyset,[-\mu:\lambda])}
\end{array}\right.$$
For subsets $A \subset V_4$ of two elements, we generically have
\begin{equation}\label{AP}
\chi_{(A,\emptyset)}\chi_{(\emptyset,[\lambda:\mu])} = \chi_{(\emptyset,\ast)},
\end{equation}
but there are a few special cases. For example, for $A = \{1,b\}$ we have that $\chi_{(\{1,b\},\emptyset)}\chi_{(\emptyset,[0:1])} = \chi_{(\emptyset,[0:1])}$, because $[0:1] = [\lambda:\mu] = [-\lambda: \mu] = [1:0]$. We enlist all these special cases in the following table
$$\begin{array}{c|l|l}
A & {\rm non-generic~points~} [\lambda:\mu] &  \chi_{(A,\emptyset)}\chi_{(\emptyset,[\lambda:\mu])}\\
\hline 
\{1,a\} & [1:1], [1:-1] & \chi_{(\emptyset,[1:1])}, \chi_{(\emptyset,[1:-1])}\\
\{b,c\} & [1:1], [1:-1] & \chi_{(\emptyset,[-1:1])}, \chi_{(\emptyset,[1:1])}\\
\{1,b\} & [0:1], [1:0] & \chi_{(\emptyset,[0:1])}, \chi_{(\emptyset,[1:0])}\\
\{a,c\} & [0:1], [1:0] & \chi_{(\emptyset,[1:0])}, \chi_{(\emptyset,[0:1])}\\
\{1,c\} & [1:i], [1:-i] & \chi_{(\emptyset,[1:i])}, \chi_{(\emptyset,[1:-i])}\\
\{a,b\} & [1:i], [1:-i] & \chi_{(\emptyset,[1:-i])}, \chi_{(\emptyset,[1:i])}
\end{array}$$
For $|A| \geq 3$, formula \eqref{AP} holds for all $[\lambda:\mu] \in \mathbb{P}^1$. Finally, it easily follows that 
$$\chi_{(A,\emptyset)}\chi_{(\emptyset,\ast)} = \chi_{(\emptyset,\ast)}, \quad \forall A \in \mc{P}(V_4).$$

All the other products in $R(\wt{Q_8})$ can now be deduced easily. For example
$$\chi_{(\{a\},[1:i])}\chi_{(\{b\},[1:i])} = \chi_{(\{a,b,c\},[1:-i])},$$
and
$$\chi_{(\{a,b\},\ast)}\chi_{(\emptyset,[\lambda:\mu])} = \chi_{(V_4,\emptyset)}.$$
We also include the exponents of $\chi_{(\emptyset, [1:1])}$
$$\begin{array}{c|c}
n & \chi_{(\emptyset, [1:1])}^n\\
\hline
1 & \chi_{(\emptyset, [1:1])}\\
2 & \chi_{(\{b,c\}, \emptyset)}\\
3 & \chi_{(\emptyset, [-1:1])}\\
4 & \chi_{(\{1,a\}, \emptyset)}\\
5 & \chi_{(\emptyset, [1:1])} \\
\vdots & \vdots
\end{array}$$

Now that we know how to multiply in $R(\wt{Q}_8)$, whence also in $\mathbb{Q}(\wt{Q}_8)$, we wonder what the Jacobson radical $J = J(\mathbb{Q}(\wt{Q}_8))$ looks like. For finite abelian groups $G$, we observed that $\chi_G$ is a 0-element in the semigroup. For $Q_8$ the element $\chi_{(V_4,\ast)}$ is easily seen to be a 0-element.
\begin{lemma}\lemlabel{exponents}
For any non-empty subset $A \subset V_4$ and $[\lambda:\mu] \in \mathbb{P}^1$ there exists an $n > 0$ such that $\chi^n_{(A,[\lambda:\mu])} = \chi_{(V_4,\ast)}$.
\end{lemma}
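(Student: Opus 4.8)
The plan is to show that raising $\chi_{(A,[\lambda:\mu])}$ to successive powers strictly enlarges the underlying body representation until it becomes the regular representation $\mathbb{C}Q_8$, which is exactly the $0$-element $\chi_{(V_4,\ast)}$. Write $M(B,x)=\bigoplus_{g\in B}T_g\oplus U^{\oplus r}$ for the body of the irreducible glider labelled $(B,x)$, where $r\in\{0,1,2\}$ is the $U$-multiplicity encoded by $x$ (so $r=1$ when $x=[\lambda:\mu]\in\mathbb{P}^1$). Then $\dim M(B,x)=|B|+2r\le 8$, with equality only for $(B,x)=(V_4,\ast)$. Since $\chi_{(V_4,\ast)}$ is the absorbing $0$-element, it suffices to produce a single power whose body is $8$-dimensional. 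Denoting the cyclic generator by $m$, the body of $\chi_{(A,[\lambda:\mu])}^{\,n}$ is $\mathbb{C}Q_8\cdot m^{\otimes n}\subseteq M(A,[\lambda:\mu])^{\otimes n}$, and via the Wedderburn decomposition $\mathbb{C}Q_8\cong M_2(\mathbb{C})\times\mathbb{C}^4$ this body is all of $\mathbb{C}Q_8$ precisely when $m^{\otimes n}$ projects nontrivially onto each $T_g$ (one-dimensional content equal to $V_4$) and its $U$-isotypic part has rank $2$ (i.e.\ $U$-multiplicity $2$).

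The heart of the argument is the following monotonicity claim: for every basis element $y=\chi_{(B,x)}$ with $B\neq\emptyset$, $x\neq\emptyset$ and $\dim M(y)<8$, one has $\dim M(y^2)>\dim M(y)$, while $y^2$ again has nonempty one-dimensional content and $U$-multiplicity $\ge1$. I would establish this by decomposing $M(y)\otimes M(y)$ using $T_g\otimes T_h=T_{gh}$, $T_g\otimes U\cong U$ and $U\otimes U=T_1\oplus T_a\oplus T_b\oplus T_c$ from \eqref{decoUU}, and tracking the projection of $m\otimes m$. Two features force the growth. First, since $V_4$ has exponent two, $g^2=1$, so the diagonal terms $t_g\otimes t_g$ always push $1$ into the content; hence the content never becomes empty and the claim's hypotheses persist. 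Second, the one-dimensional content $C_u$ of $u\otimes u$ always has at least two elements, because the coefficient system \eqref{ju} admits no solution with a single nonzero $T_g$-component (the conditions $\lambda^2=\mu^2$ and $\lambda\mu=0$ cannot hold simultaneously). Meanwhile the cross terms $t_g\otimes u$ and $u\otimes t_g$ keep the $U$-multiplicity at least $1$, and when $B=V_4$ the point-twists $[\lambda:\mu]\mapsto[\mu:\lambda],[-\lambda:\mu],[-\mu:\lambda]$ supplied by tensoring with the various $T_g$ (the isomorphisms $\varphi_1,\varphi_i,\varphi_j,\varphi_k$) produce two non-proportional coordinate vectors, raising the $U$-multiplicity to $2$.

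Granting the claim, the proof concludes by iteration: the sequence $\dim M(\chi^{2^k})$ is strictly increasing inside $\{3,\dots,8\}$, so it reaches $8$ after boundedly many squarings, and then $\chi^{2^k}=\chi_{(V_4,\ast)}$, which is the assertion. I expect the main obstacle to be verifying the strict inequality at the \emph{degenerate} configurations: the special points $[\lambda:\mu]\in\{[1:\pm1],[1:0],[0:1],[1:\pm i]\}$ together with rank-one coordinate matrices, where a single squaring may saturate only one of the two invariants. These are handled by a finite case check built on the two facts above: using $|C_u|\ge2$ and the exponent-two identity one shows the one-dimensional content strictly grows whenever it is not yet all of $V_4$, while the incompatibility of the degeneracy conditions shows the $U$-multiplicity must jump to $2$ once the content is full. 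In every case $\dim M$ strictly increases, which also rules out any proper periodic cycle below dimension $8$ and thereby guarantees that the unique idempotent reached in the cyclic semigroup generated by $\chi_{(A,[\lambda:\mu])}$ is $\chi_{(V_4,\ast)}$.
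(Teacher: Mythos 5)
Your argument is correct in substance, but it is organized quite differently from the paper's proof, so a comparison is worthwhile. The paper argues by a direct case split on $|A|$, reading the answer off the product formulas already tabulated in that section: for $|A|\geq 3$ formula \eqref{AP} gives $n=2$ immediately; for $|A|=2$ the generic case gives $n=2$ and the non-generic points are checked one by one (e.g.\ $\chi_{(\{b,c\},[1:1])}^2=\chi_{(V_4,[1:-1])}$, so $n=3$); and $|A|=1$ is reduced to $|A|\geq 2$ by a single squaring. Your route instead isolates a uniform monotonicity lemma --- the dimension of the top module $\mathbb{C}Q_8\, m^{\ot n}$ strictly increases under squaring until it reaches $8$ --- driven by exactly the right two structural facts: $1\in B\cdot B$ because $V_4$ has exponent two, and the diagonal part of $u\ot u$ always has at least two nonzero one-dimensional components because the degeneracy conditions coming from \eqref{ju} (such as $\lambda^2=\mu^2$ together with $\lambda\mu=0$) are pairwise incompatible, while the four twists $[\lambda:\mu],[\mu:\lambda],[-\lambda:\mu],[-\mu:\lambda]$ can never all coincide, forcing $U$-multiplicity $2$ once the one-dimensional content is full. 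This buys a proof that treats the non-generic points systematically rather than by enumeration, at the cost of a worse (but still explicit) bound on $n$, namely a power of $2$ rather than the paper's $n\in\{2,3\}$. Two small points to tighten: the word ``body'' is the wrong term here (in this paper $B(M)=\bigcap_i M_i=0$ for these gliders; you mean the top module $M=\mathbb{C}Q_8 m$), and your monotonicity claim should be stated so that it explicitly covers intermediate elements of the form $\chi_{(B,\ast)}$ with $|B|<4$, which do occur in the iteration (e.g.\ $\chi_{(\{1,a\},[1:i])}^2=\chi_{(\{1,a,b\},\ast)}$); the same cross-term argument disposes of them, but as written your case analysis is phrased only for $x=[\lambda:\mu]\in\mathbb{P}^1$.
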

\begin{proof}
From \eqref{AP} it follows that $n = 2$ suffices for $|A| \geq 3$. Generically we have that $\chi_{(\emptyset,[\lambda:\mu])}^2 = \chi_{(\{a,b,c\},\emptyset)}$, hence generically the result also follows for $|A| = 2$. For the non-generic points one has to check all the cases. For example
$$\chi_{(\{b,c\},[1:1])}^2 = \chi_{(V_4, [1:-1])} \Rightarrow n = 3 {\rm~satisfies.}$$
Finally if $|A| = 1$, then $\chi_{(A,[\lambda:\mu])}^2 = \chi_{(A', [\lambda':\mu'])}$ with $|A'| \geq 2$ and the result again follows.
\end{proof}
\begin{corollary}\corlabel{jacext}
The inclusion of $\mathbb{Q}$-algebras
$$\mathbb{Q}(\wt{V}_4) \hookmapright{} \mathbb{Q}(\wt{Q}_8),~ \chi_A \mapsto \chi_{(A,\emptyset)}$$
is an integral extension.
\end{corollary}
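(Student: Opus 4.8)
The plan is to exploit that the set of elements of $\mathbb{Q}(\wt{Q}_8)$ that are integral over $\mathbb{Q}(\wt{V}_4)$ forms a subring $C$, necessarily containing $\mathbb{Q}(\wt{V}_4)$ and hence containing $\mathbb{Q}$, so that $C$ is a $\mathbb{Q}$-subalgebra of $\mathbb{Q}(\wt{Q}_8)$. Since $\mathbb{Q}(\wt{Q}_8)$ is spanned over $\mathbb{Q}$ by the generalized characters $\chi_{(A,x)}$ with $A\in\mc{P}(V_4)$ and $x\in\{\emptyset\}\sqcup\mathbb{P}^1\sqcup\{\ast\}$, it is enough to prove that each such spanning element lies in $C$: the $\mathbb{Q}$-span of these elements is all of $\mathbb{Q}(\wt{Q}_8)$, so this forces $C=\mathbb{Q}(\wt{Q}_8)$. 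Thus the task reduces to exhibiting, for every $\chi_{(A,x)}$ with $x\neq\emptyset$, a monic polynomial over $\mathbb{Q}(\wt{V}_4)$ (indeed over $\mathbb{Z}$) that annihilates it; the elements with $x=\emptyset$ already lie in $\mathbb{Q}(\wt{V}_4)\subseteq C$.

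First I would record that the $0$-element $\chi_{(V_4,\ast)}$ is absorbing, hence idempotent, so $\chi_{(V_4,\ast)}^2=\chi_{(V_4,\ast)}$ (should one contract it to the algebra zero, the same elements become nilpotent and the argument only simplifies). For a label $x=[\lambda:\mu]\in\mathbb{P}^1$ with $A\neq\emptyset$, \lemref{exponents} provides $n>0$ with $\chi_{(A,[\lambda:\mu])}^n=\chi_{(V_4,\ast)}$; together with idempotency this gives $\chi_{(A,[\lambda:\mu])}^{2n}=\chi_{(A,[\lambda:\mu])}^n$, so $\chi_{(A,[\lambda:\mu])}$ is a root of $X^{2n}-X^n\in\mathbb{Z}[X]$ and lies in $C$. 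For $A=\emptyset$ I would argue structurally: by \eqref{decoUU} the module $U\otimes U$ contains no copy of $U$, so the glider underlying $\chi_{(\emptyset,[\lambda:\mu])}^2$ has no $U$-part and every irreducible glider occurring in its fragment decomposition carries $\emptyset$ as its second label; hence $\chi_{(\emptyset,[\lambda:\mu])}^2\in\mathbb{Q}(\wt{V}_4)$ and $\chi_{(\emptyset,[\lambda:\mu])}$ satisfies the monic relation $X^2-\chi_{(\emptyset,[\lambda:\mu])}^2$ over $\mathbb{Q}(\wt{V}_4)$.

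The main obstacle is the $\ast$-labels, which fall outside the literal scope of \lemref{exponents}. For $A=\emptyset$ the identity $\chi_{(\emptyset,\ast)}^2=\chi_{(V_4,\emptyset)}\in\mathbb{Q}(\wt{V}_4)$ settles the matter. For $A\neq\emptyset$ I would repeat the tensor-product bookkeeping used to prove \lemref{exponents}: squaring $\bigoplus_{g\in A}T_g\oplus U^{\oplus 2}$ produces the cross terms $T_g\otimes U^{\oplus 2}\cong U^{\oplus 2}$, so at least two copies of $U$ survive and the second label is $\ast$, while the summand $U^{\oplus 2}\otimes U^{\oplus 2}$ already realizes every one-dimensional representation of $Q_8$, so the one-dimensional part exhausts $V_4$; after discarding the essential-length-$0$ fragments (which equal the $0$-element $\chi_{(\emptyset,\emptyset)}$) this yields $\chi_{(A,\ast)}^2=\chi_{(V_4,\ast)}$, whence idempotency gives the monic relation $X^4-X^2\in\mathbb{Z}[X]$. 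Once every spanning element is shown to lie in $C$, the corollary follows.
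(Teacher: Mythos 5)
Your proposal is correct and follows essentially the same route as the paper: reduce to the spanning set of generalized characters, handle $\chi_{(A,[\lambda:\mu])}$ with $A\neq\emptyset$ via \lemref{exponents} together with the idempotency of the $0$-element $\chi_{(V_4,\ast)}$, use $\chi_{(A,\ast)}^2=\chi_{(V_4,\ast)}$ for nonempty $A$, and observe that $\chi_{(\emptyset,[\lambda:\mu])}^2$ and $\chi_{(\emptyset,\ast)}^2$ already lie in $\mathbb{Q}(\wt{V}_4)$. You merely make explicit some steps the paper leaves implicit (the subring of integral elements, the explicit monic polynomials, and the tensor-product justification of the squaring identities), which is fine.
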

\begin{proof}
The 0-element $\chi_{(V_4,\ast)}$ is idempotent, hence integral. The elements $\chi_{(A,[\lambda:\mu])}$ for $A$ non-empty are integral by \lemref{exponents}. The elements $\chi_{(A,\ast)}$ are integral since $\chi_{(A,\ast)}^2 = \chi_{(V_4,\ast)}$. Finally, the elements $\chi_{(\emptyset,[\lambda:\mu])}$ are integral since their squares sit in $\mathbb{Q}(\wt{V}_4)$. 
\end{proof}
As another corollary of \lemref{exponents} we know that $\chi_{(A,[\lambda:\mu])} = \chi_{(V_4,\ast)}$ modulo the Jacobson radical $J$ for all $A$ non-empty and $[\lambda:\mu] \in \mathbb{P}^1$. For $A = \emptyset$ and $[\lambda:\mu]$ a generic point one calculates that $(\chi_{(\emptyset,[\lambda:\mu])} - \chi_{(\emptyset,\ast)})^3 = 0$. Denote by $I'$ the ideal generated by 

$$\chi_{(\emptyset,[\lambda:\mu])} - \chi_{(\emptyset,\ast)}, ~ [\lambda:\mu] \in \mathbb{P}^1 {\rm~generic},$$
and 
$$\chi_{(A,[\lambda:\mu])} - \chi_{(V_4,\ast)},~ A \subset V_4 {\rm~non-empty},  [\lambda:\mu] \in \mathbb{P}^1.$$
By \corref{jacext} $J(\mathbb{Q}(\wt{V}_4)) = J \cap \mathbb{Q}(\wt{V}_4)$, hence we have an inclusion
$$\iota: \mathbb{Q}(\wt{V}_4)/J(\mathbb{Q}(\wt{V}_4)) \hookmapright{} \mathbb{Q}(\wt{Q}_4)/J.$$
From \theref{Ja} we know how the left hand side decomposes but there is a small remark here. In our calculation in the abelian case, we worked in the contracted semigroup algebra. The element $\chi_{(V_4,\emptyset)}$ is however no longer a 0-element in the bigger algebra, hence we have the isomorphism
$$\varphi: \bigoplus_{H \leq V_4} \mathbb{Q}(V_4/H) \mapright{\cong} \mathbb{Q}(\wt{V}_4)/J(\mathbb{Q}(\wt{V}_4)),$$
where the element $e_{V_4}$ is mapped to $\chi_{V_4}$. 
\begin{theorem}
The Jacobson radical $J$ equals $I' + J(\mathbb{Q}(\wt{V}_4))$ and we have a ring isomorphism
$$\mathbb{Q}(\wt{Q}_8)/J \cong \mathbb{Q}(V_4) \oplus \mathbb{Q}(\mathbb{Z}_4)^{\oplus 3} \oplus \mathbb{Q}^{\oplus 2} \oplus \mathbb{Q}.$$
\end{theorem}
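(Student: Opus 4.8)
The plan is to establish both claims at once by decomposing $\mathbb{Q}(\wt{Q}_8)$ along a complete system of orthogonal central idempotents and computing the resulting blocks. First I would record that $\mathbb{Q}(\wt{Q}_8)$ is commutative, since $M\ot N\cong N\ot M$ as gliders forces $\chi_M\chi_N=\chi_N\chi_M$. The inclusion $I'+J(\mathbb{Q}(\wt{V}_4))\subseteq J$ is essentially already in place: the generators of $I'$ are nilpotent (by \lemref{exponents} together with the fact that $\chi_{(V_4,\ast)}$ is the absorbing $0$-element, and by the cube computation for generic points), so in a commutative ring they generate a nil ideal contained in $J$, while $J(\mathbb{Q}(\wt{V}_4))=J\cap\mathbb{Q}(\wt{V}_4)\subseteq J$ by \corref{jacext}. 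It therefore suffices to show that $\mathbb{Q}(\wt{Q}_8)/\big(I'+J(\mathbb{Q}(\wt{V}_4))\big)$ is semisimple and isomorphic to the stated algebra; the reverse inclusion $J\subseteq I'+J(\mathbb{Q}(\wt{V}_4))$ and the displayed isomorphism then both follow, because the radical is the smallest ideal with semisimple quotient.

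Next I would peel off the $0$-element. Writing $\theta=\chi_{(V_4,\ast)}$, the relation $s\theta=\theta$ for every basis element $s$ gives $r\theta=\varepsilon(r)\theta$, where $\varepsilon$ is the augmentation summing coefficients; hence $\mathbb{Q}\theta$ is a two-sided ideal and $\theta$ a central idempotent, so $\mathbb{Q}(\wt{Q}_8)\cong\mathbb{Q}\theta\times(1-\theta)\mathbb{Q}(\wt{Q}_8)$ with $\mathbb{Q}\theta\cong\mathbb{Q}$. This factor produces the final summand $\mathbb{Q}$. On the complementary factor I would use the images $\varphi(e_H)$, $H\leq V_4$, of the primitive idempotents of \theref{Ja} applied to the \emph{non-contracted} $\mathbb{Q}(\wt{V}_4)$ (so $\varphi(e_{V_4})=\chi_{(V_4,\emptyset)}$); since $\varepsilon(\varphi(e_H))=0$ for $H\neq V_4$, the idempotents $\varphi(e_1),\varphi(e_{\langle a\rangle}),\varphi(e_{\langle b\rangle}),\varphi(e_{\langle c\rangle})$ lie in $(1-\theta)\mathbb{Q}(\wt{Q}_8)$, and together with $\varphi(e_{V_4})(1-\theta)$ they form a complete orthogonal system there. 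I would then compute each block separately.

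The heart of the proof, and the main obstacle, is the block computation. For $H=1$ one checks, using \eqref{AP}, the special-point table and $\varepsilon(\varphi(e_1))=0$, that $\varphi(e_1)$ annihilates every element carrying a $U$-part; the block is then unchanged from the abelian case and equals $\mathbb{Q}(V_4)$. For an order-two subgroup, say $\langle a\rangle$, the generic points collapse to $\chi_{(\emptyset,\ast)}$ and are killed, as are the special points belonging to $\langle b\rangle$ and $\langle c\rangle$; only the pair special for $\langle a\rangle$ survives. Here the delicate point is that, from the exponent table for $\chi_{(\emptyset,[1:1])}$, the element $\varphi(e_{\langle a\rangle})\chi_{(\emptyset,[1:1])}$ is a unit whose square is the nontrivial element of $V_4/\langle a\rangle$ and whose fourth power is the block identity, so it has multiplicative order exactly $4$: this promotes the abelian block $\mathbb{Q}(\mathbb{Z}_2)$ to $\mathbb{Q}(\mathbb{Z}_4)$. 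Verifying that the order is genuinely $4$ and not $2$, and that no further elements survive, is the step that actually uses the projective-line data and is the most error-prone.

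Finally, on $(1-\theta)\mathbb{Q}(\wt{Q}_8)$ the class of $\chi_{(\emptyset,\ast)}$ squares to the $\varphi(e_{V_4})$-block identity, so the $\varphi(e_{V_4})$-block is $\mathbb{Q}[x]/(x^2-1)\cong\mathbb{Q}^2$; and the remaining characters $\chi_{(A,\ast)}$ with $A\neq\emptyset$ satisfy $\chi_{(A,\ast)}^2=\theta$, so $\chi_{(A,\ast)}-\theta$ is nilpotent and — being realizable as a product of two essential-length-one characters with non-empty $V_4$-labels, each congruent to $\theta$ modulo $I'$ — already lies in $I'$, contributing nothing new. Assembling the four kinds of blocks with the split-off $\mathbb{Q}\theta$ yields
$$\mathbb{Q}(\wt{Q}_8)/\big(I'+J(\mathbb{Q}(\wt{V}_4))\big)\cong\mathbb{Q}(V_4)\oplus\mathbb{Q}(\mathbb{Z}_4)^{\oplus 3}\oplus\mathbb{Q}^{\oplus 2}\oplus\mathbb{Q},$$
which is semisimple; by the reduction of the first paragraph this gives both $J=I'+J(\mathbb{Q}(\wt{V}_4))$ and the desired isomorphism.
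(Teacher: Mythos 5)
Your strategy is, in outline, the paper's: show $I'+J(\mathbb{Q}(\wt{V}_4))\subseteq J$, pass to the quotient, decompose it by the orthogonal idempotents $\varphi(e_H)$ together with the splitting of the $\chi_{(V_4,\emptyset)}$-block by $\chi_{(\emptyset,\ast)}$ and the $0$-element $\chi_{(V_4,\ast)}$, and use the order-four element $\chi_{(\emptyset,[1:1])}$ (and its analogues at the other non-generic points) to promote each $\mathbb{Q}(\mathbb{Z}_2)$-block of $\mathbb{Q}(\wt{V}_4)/J(\mathbb{Q}(\wt{V}_4))$ to $\mathbb{Q}(\mathbb{Z}_4)$. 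Your preparatory reductions (nil generators of $I'$ lie in $J$ by commutativity, peeling off $\mathbb{Q}\theta$, completeness of the idempotent system) are sound and, if anything, more explicit than the paper's.

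The gap is in the step you yourself single out as the heart of the proof. Knowing that $\varphi(e_{\langle a\rangle})\chi_{(\emptyset,[1:1])}$ has multiplicative order exactly $4$ only yields a surjection $\mathbb{Q}(\mathbb{Z}_4)\twoheadrightarrow B$ onto that block; it does not exclude $B$ being a proper quotient such as $\mathbb{Q}\times\mathbb{Q}(i)$, which is three-dimensional and still contains a unit of order $4$. To conclude $B\cong\mathbb{Q}(\mathbb{Z}_4)$ you need a lower bound on $\dim_{\mathbb{Q}}B$, i.e.\ linear independence of the classes of $\varphi(e_{\langle a\rangle})$, $\varphi(e_{\langle a\rangle})\chi_{(\emptyset,[1:1])}$, $\varphi(e_{\langle a\rangle})\chi_{(\{b,c\},\emptyset)}$ and $\varphi(e_{\langle a\rangle})\chi_{(\emptyset,[-1:1])}$ modulo $I'+J(\mathbb{Q}(\wt{V}_4))$; the same issue recurs for the splitting of the $\chi_{(V_4,\emptyset)}$-block into $\mathbb{Q}^{\oplus 2}$. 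This is exactly what the paper's opening dimension count supplies: the quotient by $I'+J(\mathbb{Q}(\wt{V}_4))$ has dimension $\dim_{\mathbb{Q}}\bigl(\mathbb{Q}(\wt{V}_4)/J(\mathbb{Q}(\wt{V}_4))\bigr)+8=19$ (the eight extra classes being the six non-generic points, $\chi_{(\emptyset,\ast)}$ and $\chi_{(V_4,\ast)}$), which matches $\dim_{\mathbb{Q}}\bigl(\mathbb{Q}(V_4)\oplus\mathbb{Q}(\mathbb{Z}_4)^{\oplus 3}\oplus\mathbb{Q}^{\oplus 2}\oplus\mathbb{Q}\bigr)$ and turns all your surjections into isomorphisms at once. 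Adding that count (or an equivalent independence argument) closes your proof; with it, your route and the paper's coincide.
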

\begin{proof}
By definition of $I'$ we have that 
$$\dim_\mathbb{Q}(\frac{\mathbb{Q}(\wt{Q}_8)}{I' + J(\mathbb{Q}(\wt{V}_4))}) = \dim_\mathbb{Q}(\frac{\mathbb{Q}(\wt{V}_4)}{J(\mathbb{Q}(\wt{V}_4))}) + 8.$$
Further, recall from \theref{Ja} the decomposition of $\mathbb{Q}(\wt{V}_4)/J(\mathbb{Q}(\wt{V}_4))$ with resp. unit elements
\begin{equation}\label{tabeldeco}
\begin{array}{l|l}
\mathbb{Q}(V_4/\{1\}) & (1 - \chi_{\{1,a\}})(1 - \chi_{\{1,b\}})(1 - \chi_{\{1,c\}})\\
\mathbb{Q}(V_4/\{1,a\}) & (\chi_{\{1,a\}} - \chi_{V_4})\\
\mathbb{Q}(V_4/\{1,b\}) & (\chi_{\{1,b\}} - \chi_{V_4})\\
\mathbb{Q}(V_4/\{1,c\}) & (\chi_{\{1,c\}} - \chi_{V_4})\\
\mathbb{Q}(V_4/V_4) & \chi_{V_4}
\end{array}
\end{equation}
Inside $\mathbb{Q}(\wt{Q}_8)$ we have that
$$\chi_{(V_4,\emptyset)} = (\frac{1}{2}\chi_{(V_4,\emptyset)} + \frac{1}{2}\chi_{(\emptyset,\ast)}) +  (\frac{1}{2}\chi_{(V_4,\emptyset)} - \frac{1}{2}\chi_{(\emptyset,\ast)}),$$
and the elements $\frac{1}{2}\chi_{(V_4,\emptyset)} \pm \frac{1}{2}\chi_{(\emptyset,\ast)}$ are orthogonal idempotents, which are also orthogonal to the first four idempotents of \eqref{tabeldeco}. Next, we consider the inclusion $\iota(\varphi(\mathbb{Q}(V_4/\{1,a\}))$. The element $\iota\varphi(e_{b\{1,a\}}) = \iota(\chi_{\{b,c\}} - \chi_{V_4})$ is the square of 
$$\chi_{(\emptyset,[1:1])} - \chi_{(\emptyset,\ast)},$$ 
and we see that $ \iota(\varphi(\mathbb{Q}(V_4/\{1,a\}))$ is the subgroup algebra $\mathbb{Q}(\mathbb{Z}_2)$ of 
$$\mathbb{Q}(\langle\chi_{(\emptyset,[1:1])} - \chi_{(\emptyset,\ast)}\rangle) \cong \mathbb{Q}(\mathbb{Z}_4).$$
The inclusions $\iota(\varphi(\mathbb{Q}(V_4/\{1,b\}))$ and $\iota(\varphi(\mathbb{Q}(V_4/\{1,c\}))$ behave analogously and they use the non-generic points $[1:0], [0:1]$ and $[1:i],[1:-i]$ respectively. The isomorphism now follows since the elements
$$\begin{array}{ll}
(1 - \chi_{(\{1,a\},\emptyset)})(1 - \chi_{(\{1,b\},\emptyset)})(1 - \chi_{(\{1,c\},\emptyset)}) & \frac{1}{2}\chi_{(V_4,\emptyset)} \pm \frac{1}{2}\chi_{(\emptyset,\ast)}\\
 \chi_{(\{1,a\},\emptyset} - \chi_{(V_4,\emptyset)} & \chi_{(\{1,b\},\emptyset)} - \chi_{(V_4,\emptyset)} \\
 \chi_{(\{1,c\},\emptyset)} - \chi_{(V_4,\emptyset)} & \chi_{(V_4,\ast)}
 \end{array}$$
are all orthogonal idempotents.
\end{proof}

\section*{Acknowledgement}

The first author expresses his gratitude to Geoffrey Janssens and Eric Jespers for fruitful discussions on the calculation of the Jacobson radical and the primitive central idempotents of semigroup algebras appearing in the theory of glider representations.

\end{document}